\numberwithin{equation}{section}
\theoremstyle{plain}
\newtheorem{thm}{Theorem}[section]
\newtheorem{lemma}[thm]{Lemma}
\newtheorem{prop}[thm]{Proposition}
\newtheorem{cor}[thm]{Corollary}
\theoremstyle{definition}
\newtheorem{defn}{Definition}[section]
\newtheorem{exmp}{Example}[section]
\newtheorem{rmrk}[thm]{Remark}
\DeclareMathOperator{\cur}{Cur}
\DeclareMathOperator{\en}{End}
\DeclareMathOperator{\ind}{Ind}
\DeclareMathOperator{\lie}{Lie}
\DeclareMathOperator{\mat}{Mat}
\DeclareMathOperator{\perep}{PERep}
\DeclareMathOperator{\Real}{Re}
\DeclareMathOperator{\rep}{Rep}
\DeclareMathOperator{\res}{Res}
\DeclareMathOperator{\vir}{Vir}
\DeclareMathOperator{\zhu}{Zhu}
\newcommand{\C}{\mathbb{C}}
\newcommand{\R}{\mathbb{R}}
\newcommand{\Z}{\mathbb{Z}}
\newcommand{\B}{\mathcal{B}}
\newcommand{\g}{\mathfrak{g}}
\newcommand{\al}{\alpha}
\newcommand{\ga}{\gamma}
\newcommand{\G}{\Gamma}
\newcommand{\D}{\Delta}
\newcommand{\eps}{\epsilon}
\newcommand{\la}{\lambda}
\newcommand{\vac}{\left|0\right>}
\newcommand{\fp}{\left\lfloor P \right\rfloor}
\title{Higher level twisted Zhu algebras}
\date{}
\author{Jethro van Ekeren\footnote{email: jethro@math.mit.edu}\\ \small{\emph{Department of Mathematics, MIT, Cambridge, Massachusetts 02139, USA}}}
\begin{document}

\maketitle

\begin{abstract}
The study of twisted representations of graded vertex algebras is important for understanding orbifold models in conformal field theory. In this paper we consider the general set-up of a vertex algebra $V$, graded by $\G/\Z$ for some subgroup $\G$ of $\R$ containing $\Z$, and with a Hamiltonian operator $H$ having real (but not necessarily integer) eigenvalues. We construct the directed system of twisted level $p$ Zhu algebras $\zhu_{p, \G}(V)$, and we prove the following theorems: For each $p$ there is a bijection between the irreducible $\zhu_{p, \G}(V)$-modules and the irreducible $\G$-twisted positive energy $V$-modules, and $V$ is $(\G, H)$-rational if and only if all its Zhu algebras $\zhu_{p, \G}(V)$ are finite dimensional and semisimple. The main novelty is the removal of the assumption of integer eigenvalues for $H$. We provide an explicit description of the level $p$ Zhu algebras of a universal enveloping vertex algebra, in particular of the Virasoro vertex algebra $\vir^c$ and the universal affine Kac-Moody vertex algebra $V^k(\g)$ at non-critical level. We also compute the inverse limits of these directed systems of algebras.
\end{abstract}

\section{Introduction} \label{intro}

To a vertex operator algebra $V$, Zhu \cite{Zhu} attached a unital associative algebra which he called $A(V)$ and which we call $\zhu(V)$ -- the Zhu algebra of $V$. The Zhu algebra is important because much of the representation theory of $V$ can be reduced to representation theory of $\zhu(V)$. Indeed if $M = \oplus_{j \in \Z_+} M_j$ is a \emph{positive energy $V$-module} (defined in section \ref{prelim}) then $M_0$ naturally acquires the structure of a $\zhu(V)$-module, and $M \mapsto M_0$ defines a restriction functor $\Omega$ from the category $\perep(V)$ of positive energy $V$-modules to the category $\rep(\zhu(V))$ of $\zhu(V)$-modules.

There is an induction functor $L$, going in the opposite direction, such that $\Omega \circ L = \text{id}_{\rep(\zhu(V))}$. Furthermore $\Omega$ and $L$ become mutually inverse equivalences upon restriction from $\perep(V)$ to the full subcategory of \emph{almost irreducible} positive energy $V$-modules \cite{DK} (the term `almost irreducible' will be defined in section \ref{funcprop}).

In \cite{DLM} Dong, Li, and Mason introduced, for each $p \in \Z_+$, the level $p$ Zhu algebra $\zhu_p(V)$. It is a unital associative algebra with the property that for $M \in \perep(V)$, $M_p$ is naturally a $\zhu_p(V)$-module. There is a restriction functor $\Omega_p : M \mapsto M_p$, and a corresponding induction functor $L^p$, as before. Zhu's original algebra is the special case $\zhu(V) = \zhu_0(V)$.

Meanwhile, in \cite{DLM2}, the same authors constructed the \emph{$g$-twisted} Zhu algebra $\zhu_g(V)$ of a vertex operator algebra $V$ carrying a finite-order automorphism $g$. There are functors going between the categories of $g$-twisted positive energy $V$-modules and $\zhu_{g}(V)$-modules. Their definition reduces to Zhu's in the case $g = 1$.

In \cite{DK}, De Sole and Kac develop the same theory in the more general setting of graded vertex algebras with energy operator $H$ having not necessarily integer eigenvalues, thus encompassing some important examples such as the affine $W$-algebras.

In \cite{DLM3} twisted level $p$ Zhu algebras are defined, as in \cite{Zhu}, \cite{DLM} and \cite{DLM2}, with the assumption of integral eigenvalues of $H = L_0$. In the present paper we remove this restriction, thus subsuming all these generalizations. Our approach closely follows \cite{DK}, incorporating much from \cite{DLM}.

We compute the level $p$ Zhu algebra of a universal enveloping vertex algebra of a Lie conformal algebra with a Virasoro element, this is done in the untwisted case for simplicity. Examples of such vertex algebras are the universal affine Kac-Moody vertex algebra $V^k(\g)$ at non-critical level $k \neq -h^\vee$ and the Virasoro vertex algebra $\vir^c$.

I would like to thank my Ph.D. advisor Victor Kac for many useful suggestions and discussions.

\subsection{Outline of the paper}

In section \ref{prelim} we recall the definitions of vertex algebras and positive energy modules of a vertex algebra in the ordinary and twisted cases, these definitions are the same as those in \cite{DK}. In section \ref{motivation} we give some motivation for introducing $\zhu_p(V)$ before presenting, in section \ref{defdef}, the precise definition of the $\G$-twisted level $p$ Zhu algebra $\zhu_{p, \G}(V)$ of a $\G/\Z$-graded vertex algebra $V$.

In subsequent sections we prove unitality and associativity of $\zhu_{p, \G}(V)$. Proof by direct calculation is lengthy, but for us these calculations are made bearable by the introduction, following \cite{DK}, of a modified state-field correspondence
\begin{align*}
Z(a, w) = (1 + w)^{p+\D_a} Y(a, w).
\end{align*}
We prove analogs of the Borcherds identity, and the skew-symmetry identity for $Z(a, w)$.

In section \ref{maps} we show that there are surjective algebra homomorphisms $\zhu_{p+1}(V) \twoheadrightarrow \zhu_p(V)$ for each $p \in \Z_+$. The question ``Is $\zhu_{p, \G}(V) \neq 0$?'' thus reduces to the same question for $p=0$. This is a hard question (see Remark \ref{twistedexistence} and section \ref{unisec}); since there is no general proof that $\zhu_{p, \G}(V) \neq 0$ for all choices of $p$, $V$ and $\G$, whenever we state that $\zhu_{p, \G}(V)$ is unital, we mean unital or zero.

In section \ref{repnthry} we define the restriction functor $\Omega_p$, and the induction functor $L^p$, relating positive energy twisted $V$-modules to $\zhu_{p, \G}(V)$-modules. We show that $\Omega_p$ and $L^p$ become mutually inverse equivalences between $\rep(\zhu_{p, \G}(V))$ and the full subcategory of $\perep(V)$ of \emph{$p$-irreducible} positive energy $V$-modules. We also show that $V$ is \emph{$(\G, H)$-rational} if and only if all the Zhu algebras $\zhu_{p, \G}(V)$ are finite dimensional semisimple algebras (unfamiliar terms will be defined in section \ref{repnthry}).

In section \ref{examples} we compute the level $p$ Zhu algebra of a universal enveloping vertex algebra by using the universal properties of the objects in question. Universal enveloping vertex algebras are part of the theory of Lie conformal algebras (see \cite{Kac} and \cite{DK}), we give definitions and basic theorems in the section for completeness. We compute the inverse limit of the directed system of Zhu algebras of a universal enveloping vertex algebra. Finally we make some general remarks about Zhu algebras and their inverse limits for rational vertex algebras.

We have taken $p \in \Z_+$ throughout this paper for simplicity, but the same theorems can be formulated and proved for arbitrary $p \in \R_+$. In the untwisted case this gives nothing new, but in the twisted case non-integer level Zhu algebras may arise. In appendix \ref{pnotinZ} we detail the necessary changes to the relevant definitions and proofs.


\subsection{Basic definitions} \label{prelim}

The calculus of formal power series is a useful tool for making calculations in vertex algebras. Here we briefly define the notation we use, and refer the reader to \cite{Kac} for details. The formal delta function $\delta(z, w) \in \C[[z, z^{-1}, w, w^{-1}]]$ is defined by
\[
\delta(z, w) = \sum_{n \in \Z} z^n w^{-n-1}.
\]
If $U$ is a vector superspace\footnote{In this paper all objects are assumed to be super-, and linear maps parity-preserving, unless otherwise stated. Thus `algebra' implicitly means `superalgebra', etc. The reader may ignore this detail if he wishes.}, the residue operation $\res_z : U[[z, z^{-1}]] \rightarrow U$ sends $f(z) = \sum f_{(n)} z^{-n-1}$ to its $z^{-1}$ coefficient $f_{(0)}$. The operators $i_{z, w}$ and $i_{w, z}$ denote expansion as Laurent series in the domains $|z| > |w|$ and $|w| > |z|$, respectively. For example
\begin{align*}
i_{z, w} (z-w)^{-1} = \sum_{j \in \Z_+} z^{-j-1} w^j
\quad \text{and} \quad
i_{w, z} (z-w)^{-1} = -\sum_{j \in \Z_+} z^j w^{-j-1}.
\end{align*}

Throughout the paper we use the notation $x^{(n)} = \frac{x^n}{n!}$ and $\partial_z^{(n)} f(z) = \frac{1}{n!} \frac{\partial^n f}{{\partial z}^n}$. We write $[\xi^n] : f(\xi)$ for the $\xi^n$ coefficient of a formal power series $f(\xi)$.

Let $U$ and $V$ be vector spaces. A linear map $V \rightarrow (\en U)[[z, z^{-1}]]$, $a \mapsto Y(a, z)$, is called a quantum field if $Y(a, z)b \in U((z))$ for all $a \in V$, $b \in U$.
\begin{defn}
A vertex algebra $(V, \vac, Y)$ is a vector superspace $V$ together with a nonzero even element $\vac \in V$ and an injective parity-preserving linear map,
\begin{align*}
Y : V \otimes V &\rightarrow V((z)) \\
a \otimes b &\mapsto Y(a, z)b = \sum_{n \in \Z} (a_{(n)}b) z^{-n-1}
\end{align*}
(i.e., $Y(a, z)$ is a quantum field for all $a \in V$, equivalently $a_{(n)}b = 0$ for $n \gg 0$), such that the following two axioms hold:
\begin{itemize}
\item The vacuum axiom, $Y(\vac, z) = I_V$.

\item The Borcherds identity,
\begin{align*}
& \sum_{j \in \Z_+} Y(a_{(n+j)}b, w) \partial_w^{(j)} \delta(z, w) \\
& \phantom{\lim} = Y(a, z) Y(b, w) i_{z, w}(z-w)^n - p(a, b) Y(b, w) Y(a, z) i_{w, z}(z-w)^n
\end{align*}
for all $a, b \in V$, $n \in \Z$.
\end{itemize}
\end{defn}
Here, and further, $p(a, b)$ stands for $(-1)^{p(a)p(b)}$ where $p(a)$ is the parity of $a \in V$.

Let $U \subseteq V$ be a subspace of the vertex algebra $V$. Then $U$ is a vertex subalgebra of $V$ if $\vac \in U$ and $a_{(n)}b \in U$ for all $a, b \in U$, $n \in \Z$. $U$ is an ideal if $a_{(n)}b, b_{(n)}a \in U$ for all $a \in V$, $b \in U$, $n \in \Z$. A simple vertex algebra is one with no proper nonzero ideals. A linear map $f : V_1 \rightarrow V_2$ is a homomorphism of vertex algebras if $f(\vac_1) = \vac_2$ and $f(a_{(n)}b) = f(a)_{(n)}f(b)$ for all $a, b \in V_1$, $n \in \Z$.

For any vertex algebra $V$ the \emph{translation operator} is defined by
\begin{align*}
T : V &\rightarrow V \\
a &\mapsto a_{(-2)} \vac.
\end{align*}
One may check that $Y(Ta, z) = \partial_z Y(a, z)$ by putting $n = -2$, $b = \vac$ in the Borcherds identity and taking the residue $\res_z$ of both sides.

We consider vertex algebras equipped with the extra structure of an \emph{energy operator} $H$. This is a diagonalizable operator $H : V \rightarrow V$ such that
\begin{align}
[H, Y(a, z)] = z \partial_z Y(a, z) + Y(Ha, z) \label{Hdef}
\end{align}
for all $a \in V$. We assume $H$ has real eigenvalues on $V$ for simplicity.

A \emph{Virasoro element} $\omega \in V$ is an even element, such that if
\[
Y(\omega, z) = L(z) = \sum_{n \in \Z} L_n z^{-n-2}
\]
then $L_{-1} = T$, $L_0$ is diagonalizable, and $\{L_n\}_{n \in \Z}$ has the commutation relations of the Virasoro Lie algebra,
\begin{align}
[L_m, L_n] = (m-n) L_{m+n} + \delta_{m, -n} \frac{m^3-m}{12} c I_V
\end{align}
for some fixed $c \in \C$ called the central charge of $(V, \vac, Y, \omega)$; then $L_0$ is an energy operator. A vertex algebra with a fixed Virasoro element is called a \emph{vertex operator algebra}. To define Zhu algebras we do not require a Virasoro element, only an energy operator, but to prove some theorems we will need to assume that $H = L_0$ for some Virasoro element $\omega$. The affine Kac-Moody vertex algebra $V^k(\g)$ at the critical level $k = -h^\vee$ is an example of a vertex algebra with an energy operator that is not a vertex operator algebra. For all other $k$ though, $V^k(\g)$ is a vertex operator algebra.

If $a$ is an eigenvector of $H$ with eigenvalue $\D_a$ then $\D_a$ is called the \emph{conformal weight} of $a$. It shall be understood that, whenever we refer to $\D_a$ in a definition, the definition holds as stated for $a$ of homogeneous conformal weight, and is extended linearly to all $a \in V$. The conformal weight has the following three properties:
\begin{align}
\D_{\vac} = 0, \quad
\D_{Ta} = \D_a + 1, \quad \text{and} \quad
\D_{a_{(n)}b} = \D_a + \D_b - n - 1. \label{deltaprops}
\end{align}
The first of these follows upon substituting $a = \vac$ in (\ref{Hdef}) and using injectivity of the map $Y$, the third is proved by applying (\ref{Hdef}) to $b$ and examining the $z^{-n-1}$ coefficient, the second is the special case of the third where $n=-2$ and $b = \vac$.

It is convenient to introduce a second indexing of the operators $a_{(n)}$, called the conformal weight indexing, defined by
\begin{align*}
a(z) = \sum_{n \in \Z} a_{(n)}z^{-n-1} = \sum_{n \in -[\D_a]} a_n z^{-n-\D_a}.
\end{align*}
Here, and further, $[\alpha]$ denotes the coset $\alpha + \Z$ of $\alpha \in \R$ modulo $\Z$. Equivalently $a_n = a_{(n+\D_a-1)}$. From (\ref{Hdef}) we find
\begin{align} \label{Hcomm}
[H, a_{(n)}] = (\D_a-n-1) a_{(n)} \quad \text{and} \quad [H, a_n] = -n a_n.
\end{align}

In preparation for the definition of a $V$-module it is useful rewrite the Borcherds identity by expanding and equating coefficients; it becomes
\begin{align}
\begin{split}
& \sum_{j \in \Z_+} \binom{m'}{j} (a_{(n+j)}b)_{(m'+k'-j)}c \\
& \phantom{\lim} = \sum_{j \in \Z_+} (-1)^j \binom{n}{j} \left[ a_{(m'+n-j)} b_{(k'+j)} - p(a, b) (-1)^n b_{(k'+n-j)} a_{(m'+j)} \right]c
\end{split} \label{norbor}
\end{align}
for all $a, b, c \in V$, $m', k', n \in \Z$. Substituting $m = m' - \D_a + 1$ and $k = k' + n - \D_b + 1$ in equation (\ref{norbor}) yields
\begin{align}
\begin{split}
& \sum_{j \in \Z_+} \binom{m+\D_a-1}{j} (a_{(n+j)}b)_{m+k}c \\
& \phantom{\lim} = \sum_{j \in \Z_+} (-1)^j\binom{n}{j} \left[ a_{m+n-j}b_{k+j-n} - p(a, b) (-1)^n b_{k-j} a_{m+j} \right]c
\end{split} \label{conbor}
\end{align}
for all $a, b, c \in V$, $m \in -[\D_a]$, $k \in -[\D_b]$ and $n \in \Z$.

\begin{defn} \label{ordmoddef}
Let $(V, \vac, Y)$ be a vertex algebra. A \emph{$V$-module} is a vector superspace $M$, together with a linear parity-preserving map
\begin{align*}
Y^M : V \otimes M &\rightarrow M((z)) \\
a \otimes x &\mapsto Y^M(a, z)x = \sum_{n \in \Z} (a^M_{(n)}x) z^{-n-1},
\end{align*}
such that the following two properties hold:
\begin{itemize}
\item $Y^M(\vac, z) = I_M$.

\item For all $a, b \in V$, $x \in M$, $m', k', n \in \Z$,
\begin{align*}
\begin{split}
& \sum_{j \in \Z_+} \binom{m'}{j} (a_{(n+j)}b)^M_{(m'+k'-j)} x \\
& \phantom{\lim} = \sum_{j \in \Z_+} (-1)^j \binom{n}{j} \left[ a^M_{(m'+n-j)} b^M_{(k'+j)} - p(a, b) (-1)^n b^M_{(k'+n-j)} a^M_{(m'+j)} \right] x.
\end{split}\end{align*}
\end{itemize}
A linear subspace $N \subseteq M$ is a $V$-submodule if $a^M_{(n)}x \in N$ for all $a \in V$, $x \in N$, $n \in \Z$. A homomorphism of $V$-modules is a linear parity-preserving map $f : M_1 \rightarrow M_2$ such that $f(a^{M_1}_{(n)}x) = a^{M_2}_{(n)}f(x)$ for all $a \in V$, $x \in M_1$, $n \in \Z$.
\end{defn}

\begin{defn} \label{confmoddef}
Let $V$ be a vertex algebra with energy operator $H$, and let $M$ be a $V$-module. For $n \in -[\D_a]$ let $a^M_n = a^M_{(n+\D_a-1)}$. A \emph{positive energy $V$-module} is a $\Z_+$-graded $V$-module
\begin{align}
M = \bigoplus_{j \in \Z_+} M_j
\end{align}
where $a^M_n (M_j) \subseteq M_{j-n}$. We call $M$ irreducible if it contains no proper nonzero graded $V$-submodules.
\end{defn}


Now we turn to the `twisted' theory.
\begin{defn}
Let $\G$ be a subgroup of $(\R, +)$ containing $\Z$. A \emph{$\G/\Z$-graded vertex algebra} is a vertex algebra $(V, \vac, Y)$ where $V$ is a $\G/\Z$-graded vector superspace, $\vac \in V^{[0]}$, and if $[\alpha], [\beta] \in \G/\Z$ then $(V^{[\alpha]})_{(n)}(V^{[\beta]}) \subseteq V^{[\alpha] + [\beta]}$ for all $n \in \Z$. An energy operator for $V$ is an energy operator $H$ for $V$ as an ordinary vertex algebra, such that $H(V^{[\alpha]}) \subseteq V^{[\alpha]}$. An element of $V^{[\alpha]}$ is said to have \emph{degree} $[\alpha]$ and we let $[\ga_a]$ denote the degree of a homogeneous element $a \in V$.
\end{defn}

\begin{exmp}
If $V$ is a vertex algebra with an automorphism $g$ such that $g^n = 1$ and $g$ commutes with $H$ then $V$ decomposes into eigenspaces $V^{[\alpha]}$ with $g$-eigenvalue $e^{2\pi i \alpha}$ (where $\alpha \in \frac{1}{n}\Z$). This makes $V$ into a $\G/\Z$-graded vertex algebra where $\G = \frac{1}{n}\Z$.
\end{exmp}

\begin{exmp} \label{ga=D}
Another important example is given by setting $[\ga_a] = [\D_a]$ for all $a$ homogeneous of conformal weight $\D_a$. Because of (\ref{deltaprops}) $V$ becomes $\G/\Z$-graded where $\G \subseteq \R$ is the subgroup generated by $1$ and the eigenvalues of $H$. 
\end{exmp}

In the following definition $M\{\{z\}\}$ denotes $\oplus_{r \in \R} z^r M((z))$.
\begin{defn}
Let $V$ be a $\G/\Z$-graded vertex algebra. A \emph{$\G$-twisted $V$-module} is a vector superspace $M$, together with a linear parity-preserving map
\begin{align*}
Y^M : V \otimes M &\rightarrow M\{\{z\}\} \\
a \otimes x &\mapsto a^M(z) = \sum_{n \in [\ga_a]} (a^M_{(n)}x) z^{-n-1}
\end{align*}
for $a$ homogeneous of degree $[\ga_a]$, such that the following two properties hold:
\begin{itemize}
\item $Y^M(\vac, z) = I_M$.

\item For all $a, b \in V$, $x \in M$, $m' \in [\ga_a]$, $k' \in [\ga_b]$ and $n \in \Z$,
\begin{align*}
\begin{split}
& \sum_{j \in \Z_+} \binom{m'}{j} (a_{(n+j)}b)^M_{(m'+k'-j)} x \\
& \phantom{\lim} = \sum_{j \in \Z_+} (-1)^j \binom{n}{j} \left[ a^M_{(m'+n-j)} b^M_{(k'+j)} - p(a, b) (-1)^n b^M_{(k'+n-j)} a^M_{(m'+j)} \right] x.
\end{split}\end{align*}
\end{itemize}
\end{defn}

\begin{rmrk} \label{twistednotthesame}
An ordinary $V$-module for a vertex algebra $V$ that happens to be $\G/\Z$-graded need not be a $\G$-twisted $V$-module. Indeed while $V$ is itself a $V$-module in the obvious way, a $\G/\Z$-graded vertex algebra $V$ is not necessarily a $\G$-twisted $V$-module (the Borcherds identity with $m, k, n \in \Z$ holds, but the Borcherds identity with non-integer $m, k$ need not hold).
\end{rmrk}

\begin{defn} \label{twistedconfmoddef}
Let $V$ be a $\G/\Z$-graded vertex algebra with energy operator $H$ and let $M$ be a $\G$-twisted $V$-module. For $n \in [\ga_a]-[\D_a]$ let $a^M_n = a^M_{(n+\D_a-1)}$, and put $[\eps_a] = [\ga_a]-[\D_a]$. A \emph{$(\G, H)$-twisted positive energy $V$-module} is an $\R_+$-graded $\G$-twisted $V$-module
\begin{align}
M = \bigoplus_{j \in \R_+} M_j
\end{align}
where $a^M_n (M_j) \subseteq M_{j-n}$. Let $\eps_a \in \R$ to be the largest non-positive element of $[\eps_a]$, so $-1 < \eps_a \leq 0$, and let $\ga_a = \D_a + \eps_a$, i.e., the largest element of $[\ga_a]$ that does not exceed $\D_a$.
\end{defn}

Why do we write $(\G, H)$-twisted instead of $\G$-twisted in the definition above? Let $V^{[\alpha]}[\D]$ denote the set of elements of $V$ homogeneous of degree $[\alpha]$ and homogeneous of conformal weight $\D$, and let $\overline{\G}$ be the subgroup of $\R$ generated by $1$ and $\eps_a$ as $a$ ranges over all nonzero $V^{[\alpha]}[\D]$. For instance, in example (\ref{ga=D}), $[\eps_a] = [\ga_a] - [\D_a] = 0$ for all $a \in V$ and so $\overline{\G} = \Z$, even though $\G/\Z$ may be nontrivial. On the other hand, if we take the trivial grading $\G = \Z$ then $\overline{\G}$ is the subgroup of $\R$ generated by $1$ and the eigenvalues of $H$. In general $\overline{\G}$ may be quite unrelated to $\G$.

The action of $V$ on $M$ preserves cosets of $\overline{\G} \subseteq \R$, so graded pieces of $M$ in different cosets of $\overline{\G}$ lie in different direct summands of $M$. Therefore we may restrict our attention to modules that are $(\overline{\G} \cap \R_+)$-graded without loss of generality. Since the index set for the grading depends on $\G$ and $H$ it is natural to call such modules $(\G, H)$-twisted.

Just as we pass from equation (\ref{norbor}) to (\ref{conbor}), we have
\begin{align} \label{modconbor}
\begin{split}
& \sum_{j \in \Z_+} \binom{m+\D_a-1}{j} (a_{(n+j)}b)^M_{m+k} x \\
& \phantom{\lim} = \sum_{j \in \Z_+} (-1)^j\binom{n}{j} \left[ a^M_{m+n-j} b^M_{k+j-n} - p(a, b) (-1)^n b^M_{k-j} a^M_{m+j} \right] x
\end{split}
\end{align}
for all $a, b \in V$ a $\G/\Z$-graded vertex algebra, $x \in M$ a $\G$-twisted $V$module, $m \in [\eps_a]$, $k \in [\eps_b]$, and $n \in \Z$.

\begin{rmrk} \label{twistedexistence}
From its definition below it is clear that $\zhu_{p, \G}(V) = 0$ if and only if $V$ has no $(\G, H)$-twisted modules. It is not known whether every $\G/\Z$-graded vertex algebra has a non-zero $\G$-twisted $V$-module or not \cite{DLM2}, but we know of no examples which don't and there are many examples which do. A general result in this direction is in \cite{DLMorbifold} where $\zhu_{0, \G}(V) \neq 0$ is proved for $V$ a simple vertex algebra with a finite automorphism $g$, and satisfying a certain finiteness condition (namely the $C_2$ condition of Zhu \cite{Zhu}).
\end{rmrk}

\begin{lemma} \label{THann}
Let $V$ be a $\G/\Z$-graded vertex algebra, let $M$ be a $(\G, H)$-twisted $V$-module, let $a \in V$, and let $s \in [\eps_a]$. Then
\begin{align} \label{THcondition}
[(T + s + \D_a)a]_s M = 0.
\end{align}
In particular $[(T+H)a]_0 M = 0$ for all $a$ such that $\eps_a = 0$.
\end{lemma}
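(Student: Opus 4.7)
The plan is to reduce the statement to the module-level identity $(Ta)^M_{(m)} = -m\, a^M_{(m-1)}$, or equivalently $Y^M(Ta,z) = \partial_z Y^M(a,z)$. Given this, we can translate to the conformal weight indexing: since $\D_{Ta} = \D_a + 1$, we have
\begin{align*}
(Ta)^M_n = (Ta)^M_{(n+\D_a)} = -(n+\D_a)\, a^M_{(n+\D_a-1)} = -(n+\D_a)\, a^M_n
\end{align*}
for every $n \in [\eps_a]$ (which is exactly the condition $n+\D_a \in [\ga_a] = [\ga_{Ta}]$). Rearranging gives $[(T+n+\D_a)a]^M_n = 0$, and specializing $n=s$ yields the claimed identity. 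The ``in particular'' statement follows by noting that when $\eps_a = 0$ we may take $s=0$, and $(T+\D_a)a = (T+H)a$ on eigenvectors of $H$.

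So the real content is establishing $(Ta)^M_{(m)} = -m\, a^M_{(m-1)}$ on a $\G$-twisted module. The natural approach is to apply the module Borcherds identity with $b = \vac$, $k' = -1$, and $n = -2$. Because $Y^M(\vac,z) = I_M$, on the right-hand side the terms $\vac^M_{(k'+j)}$ and $\vac^M_{(k'+n-j)}$ collapse to Kronecker deltas: only the $j=0$ term in the first piece survives, while the second piece vanishes entirely (since $k'+n-j = -3-j < -1$ for $j \geq 0$). On the left-hand side the vacuum relations $a_{(-2)}\vac = Ta$ and $a_{(-1)}\vac = a$ leave only the $j=0$ and $j=1$ contributions, and one reads off $(Ta)^M_{(m'-1)} + m'\, a^M_{(m'-2)} = a^M_{(m'-2)}$, which is the desired identity after renaming $m = m'-1$.

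The only subtle point is that $m'$ must lie in $[\ga_a]$ (not just in $\Z$), so one should check that this argument is valid in the twisted setting; but since $k' = -1 \in [\ga_\vac] = [0]$ and the derivation $T$ preserves the $\G/\Z$-grading, the Borcherds identity (\ref{modconbor}) for twisted modules applies verbatim, with $m' \in [\ga_a]$ arbitrary. No other step requires any genuinely new input. I do not anticipate any real obstacle here: the argument is a direct calculation controlled entirely by the vacuum axiom.
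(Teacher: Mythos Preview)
Your proof is correct and takes essentially the same approach as the paper: both apply the Borcherds identity with $b=\vac$, $n=-2$, and use $\vac^M_{(n)}=\delta_{n,-1}I_M$ together with $a_{(-2)}\vac=Ta$, $a_{(-1)}\vac=a$. The only cosmetic difference is that the paper plugs $m=s+1$, $k=-1$ directly into the conformal-weight-indexed identity (\ref{modconbor}) to obtain $[a_{(-2)}\vac+(s+\D_a)a_{(-1)}\vac]_s=0$ in one step, whereas you first derive $(Ta)^M_{(m)}=-m\,a^M_{(m-1)}$ in the original mode indexing and then translate.
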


\begin{proof}
In (\ref{conbor}) put $b = \vac$, $m = s+1$, $k = -1$, and $n = -2$, then note that $\vac^M_{(n)} = \delta_{n, -1}I_M$. We obtain
\begin{align*}
\left[ a_{(-2)}\vac + (s+\D_a) a_{(-1)}\vac \right]_{s} = 0,
\end{align*}
which is (\ref{THcondition}).
\end{proof}

Let $V$ be a vertex algebra with trivial grading. There is an important Lie superalgebra associated to $V$ called $\lie V$ \cite{Kac}, it is defined as
\begin{align}
\lie V = V[t, t^{-1}] / (T + \partial_t)V[t, t^{-1}]
\end{align}
with the Lie superalgebra bracket
\begin{align}
[a_{(m)}, b_{(k)}] = \sum_{j \in \Z_+} \binom{m}{j} (a_{(j)}b)_{(m+k-j)}, \label{norbrak}
\end{align}
where $a_{(m)}$ denotes the image of $at^m$ in the quotient.

Equation (\ref{norbrak}) is the same as equation (\ref{norbor}) with $n=0$. Let $M$ be a $V$-module. Because of (\ref{THcondition}) we find $(T + \partial_t)V[t, t^{-1}]$ acts by $0$ on $M$. These two facts imply that $M$ has the structure of a $\lie V$-module under $a_{(n)} \mapsto a^M_{(n)}$.

If $V$ has an energy operator then we define the conformal weight indexing of elements of $\lie V$ by $a_n = a_{(n+\D_a-1)}$. Equation (\ref{norbrak}) becomes
\begin{align}
[a_m, b_k] = \sum_{j \in \Z_+} \binom{m+\D_a-1}{j} (a_{(j)}b)_{m+k}. \label{conbrak}
\end{align}
\begin{defn} \label{lieVdef}
Let $V$ be a $\G/\Z$-graded vertex algebra, and let
\[
Q = \bigoplus_{[\al] \in \G/\Z} t^{\al} V^{[\al]} \otimes_\C \C[t, t^{-1}]
\]
(where $\al$ is any representative of the coset $[\al]$). Define $\lie V$ to be the quotient
\[
Q / (T + \partial_t) Q,
\]
with the Lie bracket (\ref{norbrak}), where $a_{(m)}$ is again the image of $at^m$ in the quotient, though now $m \in [\ga_a]$.

The conformal weight grading is again defined by $a_n = a_{(n+\D_a-1)}$ where $n \in [\eps_a]$ (one may check that $(T + \partial_t)Q$ is the subspace spanned by elements of the form $(Ta)_n + (n + \D_a)a_n$ where $a \in V$, $n \in [\eps_a]$, so the conformal weight grading is well-defined). Equation (\ref{conbrak}) holds. Moreover, with the grading $\deg a_n = n$, $\lie V$ is a graded Lie superalgebra.
\end{defn}
Every $(\G, H)$-twisted positive energy $V$-module is automatically a $\lie V$-module. Observe that the weight $0$ subspace $(\lie{V})_0 = \C\{a_n \in \lie V | [\eps_a] = [0], n \in \Z\} \subseteq \lie{V}$ is a Lie subalgebra.

Finally, we record some properties of the functions $\eps_a$ and $\ga_a$ defined in definition (\ref{twistedconfmoddef}). $T$ preserves $[\ga_a]$ and $[\D_a]$, hence $[\eps_a]$ and $\eps_a$. The $[\eps]$ function is additive in the $n^{\text{th}}$ products, but $\eps$ is not, for example if $-1 < \eps_a, \eps_b \leq -\frac{1}{2}$ then $\eps_{a_{(n)}b} = \eps_a + \eps_b + 1$.
\begin{defn}
\begin{align*}
\chi(a, b) = \left\{ \begin{array}{ll}
1 & \text{if $\eps_a + \eps_b \leq -1$} \\
0 & \text{if $\eps_a + \eps_b > -1$} \\
\end{array} \right..
\end{align*}
\end{defn}
We have
\begin{lemma}
\begin{align*}
\eps_{\vac} = 0, \quad
\eps_{Ta} = \eps_a, \quad \text{and} \quad
\eps_{a_{(n)}b} = \eps_a + \eps_b + \chi(a, b),
\end{align*}
and
\begin{align*}
\ga_{\vac} = 0, \quad
\ga_{Ta} = \ga_a + 1, \quad \text{and} \quad
\ga_{a_{(n)}b} = \ga_a + \ga_b - n - 1 + \chi(a, b).
\end{align*}
\end{lemma}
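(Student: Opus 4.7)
The plan is to reduce each assertion to a direct manipulation of the defining data of $\eps_a$ and $\ga_a$, using the three properties \eqref{deltaprops} of $\D_a$ together with the fact that the degree $[\ga_a]$ behaves additively in the $n$-th products and is preserved by $T$. Since $\ga_a = \D_a + \eps_a$, every claim about $\ga$ follows from the corresponding claim about $\eps$, so the real content is in the three statements about $\eps$.

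First I would handle the vacuum and translation cases. Because $\vac \in V^{[0]}$ we have $[\ga_\vac] = [0]$, and $\D_\vac = 0$, so $[\eps_\vac] = [0]$; the largest non-positive representative of $[0]$ is $0$ itself, giving $\eps_\vac = 0$ and hence $\ga_\vac = 0$. For $Ta$, the operator $T$ preserves the grading of $V$, so $[\ga_{Ta}] = [\ga_a]$, while $\D_{Ta} = \D_a + 1$; thus $[\eps_{Ta}] = [\ga_a] - [\D_a + 1] = [\eps_a]$ in $\R/\Z$, and taking the largest non-positive representative yields $\eps_{Ta} = \eps_a$ and $\ga_{Ta} = \D_a + 1 + \eps_a = \ga_a + 1$.

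The main step is the identity for $\eps_{a_{(n)}b}$. From the grading axiom $V^{[\ga_a]}_{(n)}V^{[\ga_b]} \subseteq V^{[\ga_a]+[\ga_b]}$ together with $\D_{a_{(n)}b} = \D_a + \D_b - n - 1$, one obtains
\begin{align*}
[\eps_{a_{(n)}b}] = [\ga_a] + [\ga_b] - [\D_a] - [\D_b] + [n+1] = [\eps_a] + [\eps_b]
\end{align*}
in $\R/\Z$. Since $-1 < \eps_a, \eps_b \leq 0$, we have $-2 < \eps_a + \eps_b \leq 0$, and I would split into two cases according to the definition of $\chi(a,b)$. If $\eps_a + \eps_b > -1$, then $\eps_a + \eps_b$ is already in the half-open interval $(-1, 0]$, so it is the largest non-positive element of its coset and $\eps_{a_{(n)}b} = \eps_a + \eps_b = \eps_a + \eps_b + \chi(a,b)$. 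If $\eps_a + \eps_b \leq -1$, then $\eps_a + \eps_b + 1 \in (-1, 0]$ is the required representative, giving $\eps_{a_{(n)}b} = \eps_a + \eps_b + 1 = \eps_a + \eps_b + \chi(a,b)$.

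Having established all three properties of $\eps$, the corresponding properties of $\ga$ follow by adding $\D$: $\ga_{a_{(n)}b} = \D_{a_{(n)}b} + \eps_{a_{(n)}b} = (\D_a + \D_b - n - 1) + (\eps_a + \eps_b + \chi(a,b)) = \ga_a + \ga_b - n - 1 + \chi(a,b)$, and similarly for the vacuum and translation formulas. The only potential pitfall is keeping track of whether one is working with cosets in $\R/\Z$ or with their distinguished representatives in $(-1, 0]$; beyond that bookkeeping, the argument is purely a matter of definition-chasing.
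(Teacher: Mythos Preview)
Your proof is correct and follows essentially the same reasoning the paper sketches in the paragraph preceding the lemma (the paper itself does not give a formal proof, only the remark that $T$ preserves $[\ga_a]$ and $[\D_a]$, hence $\eps_a$, and that $[\eps]$ is additive while $\eps$ requires the correction $\chi(a,b)$). You have simply filled in the routine case analysis on whether $\eps_a+\eps_b$ lands in $(-1,0]$ or $(-2,-1]$, which is exactly what is needed.
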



\subsection{Motivation to introduce Zhu algebras} \label{motivation}

Let $V$ be a $\G/\Z$-graded vertex algebra with energy operator $H$, and let $M$ be a $(\G, H)$-twisted positive energy $V$-module. Define
\begin{align}
V_\G = \{a \in V | [\eps_a] = [0]\},
\end{align}
which is a vertex subalgebra of $V$. If $a \in V_\G$ then each graded piece $M_p$ of $M$ is stable under $a^M_0$. This may lead one to try to regard $M_p$ as a module over $V_\G$ with $a \in V_\G$ acting as $a^M_0$. More precisely, one tries to find a product $*_p : V_\G \otimes V_\G \rightarrow V_\G$ such that $(a *_p b)^M_0 = a^M_0 b^M_0$ for every $V$-module $M$.

Let $x \in M_p$ where $p \in \Z_+$ for simplicity (in appendix \ref{pnotinZ} we will describe the changes that must be made for the general case of $p \in \R_+$). Let $a, b \in V$ with $[\eps_a] + [\eps_b] = [0]$, so $\eps_a + \eps_b + \chi(a, b) = 0$. Put $m = p+1+\eps_a$, $k = -(p+1+\eps_a)$, $n \in \Z$ in (\ref{modconbor}). Because $a_s M_p = 0$ for $s > p$, we obtain
\begin{align}
\sum_{j \in \Z_+} \binom{\ga_a+p}{j}(a_{(n+j)}b)^M_0 x
= \sum_{j \in \Z_+} (-1)^j \binom{n}{j} a^M_{p+1+n-j+\eps_a} b^M_{-p-1-n+j-\eps_a} x. \label{biggerp}
\end{align}
The right hand side vanishes when $n \leq -2p-2+\chi(a, b)$, in other words
\begin{align}
(a_{[n]}b)^M_0 M_p = 0 \quad \text{whenever} \quad n \leq -2p-2+\chi(a, b), \label{minus2ann}
\end{align}
where we introduce the notation
\begin{align*}
a_{[n]}b = \sum_{j \in \Z_+} \binom{\ga_a+p}{j} a_{(n+j)}b.
\end{align*}

Let $a, b \in V_\G$, so that $\eps_a = \eps_b = \chi(a, b) = 0$; equation (\ref{biggerp}) becomes
\begin{align*}
(a_{[n]}b)^M_0 x = \sum_{j \in \Z_+} (-1)^j \binom{n}{j} a^M_{p+1+n-j} b^M_{-p-1-n+j} x.
\end{align*}
We define $*_p$ by
\begin{align}
a *_p b = \sum_{m=0}^p \binom{-p-1}{m} a_{[-p-1-m]}b.
\end{align}

Now we check that $(a *_p b)^M_0 x = a^M_0 b^M_0 x$. Indeed we have
\begin{align*}
(a *_p b)^M_0 x
= \sum_{j \in \Z_+} \sum_{m=0}^p (-1)^j \binom{-p-1}{m} \binom{-p-1-m}{j} a^M_{-m-j} b^M_{m+j} x.
\end{align*}
For an integer $\alpha$, such that $0 \leq \alpha \leq p$, the coefficient of $a^M_{-\alpha} b^M_{\alpha} x$ here is
\begin{align*}
\sum_{j=0}^{\alpha} (-1)^j \binom{-p-1}{\alpha-j} \binom{-p-1-\alpha+j}{j}
& = \sum_{j=0}^{\alpha} \binom{-p-1}{\alpha-j} \binom{p+\alpha}{j} \\
& = [\xi^{\alpha}]: (1+\xi)^{-p-1} (1+\xi)^{p+\alpha} \\
& = [\xi^{\alpha}]: (1+\xi)^{\alpha-1}
\end{align*}
which is $1$ if $\alpha=0$ and $0$ if $\alpha>0$. So indeed $(a *_p b)^M_0 = a^M_0 b^M_0$ and $M_p$ becomes a module over $(V_\G, *_p)$.

The algebra $(V_\G, *_p)$ is not associative, but recall equations (\ref{THcondition}) and (\ref{minus2ann}) which display elements $a \in V_\G$ such that $a^M_0|_{M_p} = 0$ for all $V$-modules $M$. Indeed, let
\begin{align*}
J_{p, \G} = \C\{(T + H)a | a \in V_\G\} + \C\{a_{[-2p-2+\chi(a, b)]}b | [\eps_a] + [\eps_b] = [0]\} \subseteq V_\G.
\end{align*}
From the remarks above we have
\begin{lemma} \label{Jannihilates}
\begin{align*}
(J_{p, \G})^M_0 M_p = 0.
\end{align*}
\end{lemma}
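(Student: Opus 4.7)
The plan is to observe that $J_{p,\G}$ is by definition a sum of two subspaces, each spanned by an explicit family of elements; by linearity it suffices to show that every spanning element of each type has its zero mode annihilating $M_p$. Both cases reduce to results already available earlier in this section, so no new identity has to be proved — the work is just in matching parameters.

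For spanning elements of the first type, namely $(T+H)a$ with $a \in V_\G$, I would invoke Lemma \ref{THann}. Since $a \in V_\G$ means $[\eps_a] = [0]$, and the convention is that $\eps_a$ is the largest non-positive element of $[\eps_a]$, we have $\eps_a = 0$, so $0 \in [\eps_a]$. Applying Lemma \ref{THann} with $s = 0$ yields $[(T+\D_a)a]_0 M = 0$, and of course this annihilates $M_p$ in particular. On an $H$-eigenvector $a$ of conformal weight $\D_a$, $Ha = \D_a a$, so $(T+H)a = (T+\D_a)a$; extending linearly in $a$ handles the whole first summand of $J_{p,\G}$.

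For spanning elements of the second type, namely $a_{[n]}b$ with $n = -2p-2 + \chi(a,b)$ and $[\eps_a] + [\eps_b] = [0]$, the assertion is literally the statement (\ref{minus2ann}) derived in the motivational discussion just above, instantiated at the boundary value of $n$. Substituting that single value of $n$ into (\ref{minus2ann}) and extending linearly over the indicated pairs $(a,b)$ disposes of the second summand.

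Putting the two cases together gives $(J_{p,\G})^M_0 M_p = 0$. There is no real obstacle here, since both of the required identities have been established; the only thing to watch is that on $V_\G$ the convention $\eps_a = 0$ actually holds (not merely $[\eps_a] = [0]$), so that the hypothesis $s \in [\eps_a]$ in Lemma \ref{THann} is legitimately met by $s = 0$, making the $(T+H)$ version of the statement coincide with the $(T+\D_a)$ version used in that lemma.
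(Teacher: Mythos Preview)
Your proposal is correct and matches the paper's approach exactly: the paper's own ``proof'' is simply the sentence ``From the remarks above we have,'' and those remarks are precisely Lemma \ref{THann} (for the $(T+H)a$ summand, using $\eps_a = 0$ on $V_\G$) and equation (\ref{minus2ann}) (for the $a_{[-2p-2+\chi(a,b)]}b$ summand). You have simply made explicit what the paper leaves to the reader.
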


Later we shall see that $J_{p, \G}$ is a $2$-sided ideal of $(V_\G, *_p)$, and that the quotient
\[
\zhu_{p, \G}(V) = V_\G / J_{p, \G}
\]
is an associative algebra with unit element $\vac$. Because of the lemma above the action of $V_\G$ on $M_p$ factors to an action of $\zhu_{p, \G}(V)$.


\section{\texorpdfstring{The algebra $\zhu_{p, \G, \hbar}(V)$}{The Zhu algebra}}

\subsection{Formal definition} \label{defdef}

In section \ref{intro} we introduced $a_{[n]}b$. Observe that
\[
(1+z)^{\ga_a+p} Y(a, z) = \sum_{n \in \Z} a_{[n]} z^{-n-1}.
\]
Following \cite{DK} we put
\begin{align} \label{asndef}
\begin{split}
Z(a, z) &= (1+\hbar z)^{\ga_a+p} Y(a, z) = \sum_{n \in \Z} a_{[n, \hbar]} z^{-n-1}, \\
\text{so that} \quad a_{[n, \hbar]} &= \sum_{j \in \Z_+} \binom{\ga+p}{j} \hbar^j a_{(n+j)}.
\end{split}
\end{align}
Clearly $a_{[n, \hbar=1]} = a_{[n]}$. Motivated by the discussion in section \ref{intro} we put
\begin{align} \label{pthdef}
a *_{p, \hbar} b = \sum_{m = 0}^p \binom{-p-1}{m} \hbar^{-p-m} a_{[-p-1-m]} b.
\end{align}
We shall often drop $\hbar$ for simplicity of notation, when no confusion should arise. For example we write $a_{[n]}$ for $a_{[n, \hbar]}$ and $*_p$ for $*_{p, \hbar}$.

\begin{defn}
Let $V$ be a $\G/\Z$-graded vertex algebra with Hamiltonian operator $H$. The $\G$-twisted level $p$ Zhu algebra $\zhu_{p, \G, \hbar}(V)$ of $V$ is defined to be $V_{\G, \hbar} / J_{p, \G, \hbar}$ where $V_{\G, \hbar} = \C[\hbar, \hbar^{-1}] \otimes_\C V_\G$ and
\begin{align} \label{Jdef}
\begin{split}
V_{\G, \hbar}
\supseteq J_{p, \G, \hbar}
= {} & \C[\hbar, \hbar^{-1}] \{(T + \hbar H)a | [\eps_a] = [0]\} \\
& + \C[\hbar, \hbar^{-1}] \{a_{[-2p-2+\chi(a, b)]}b | [\eps_a] + [\eps_b] = [0]\}.
\end{split}
\end{align}
The product $*_p$ of $\zhu_{p, \G, \hbar}(V)$ is that inherited from $V_{\G, \hbar}$.
\end{defn}
This definition reduces to that of section \ref{intro} when $\hbar=1$.

The authors of \cite{DK} showed that the algebras $\zhu_{0, \G, \hbar}(V)$ are isomorphic for all $\hbar \in \C^\times$, but that there is an interesting degeneration at $\hbar = 0$. For $p > 0$ we use negative powers of $\hbar$ in the definition of $\zhu_{p, \G, \hbar}(V)$, so there is no obvious analogous degeneration. Nevertheless there is no harm in keeping $\hbar$.

In section \ref{borsec} we develop an analog of the Borcherds identity for the modified fields $Z(a, z)$ and use it to show that $J_{p, \G, \hbar}$ is a right ideal. In section \ref{sksmsec} we prove a skew-symmetry formula
\begin{align}
a *_p b - p(a, b) b *_p a = \hbar [a, b]_{\hbar} \pmod{J_{p, \G, \hbar}}
\end{align}
(where $[a, b]_{\hbar}$ is defined in that section). Then we use the skew-symmetry formula, in section \ref{derivprop}, to prove that $J_{p, \G, \hbar}$ is also a left ideal. In sections \ref{assocsec} and \ref{unisec} we show that, modulo $J_{p, \G, \hbar}$, the product $*_p$ is associative and $\vac$ is a unit, respectively.

\begin{lemma}\label{Tind} $\phantom{ }$
\begin{itemize}
\item For all $a, b \in V$, $n \in \Z$,
\begin{align} \label{Tindeqn}
(Ta)_{[n]}b + \hbar(\ga_a + p + n + 1) a_{[n]}b = -n a_{[n-1]}b.
\end{align}

\item $V_{[j]}V \subseteq V_{[k]}V$ whenever $j \leq k \leq -1$.
\end{itemize}
\end{lemma}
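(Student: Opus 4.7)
The plan for part (1) is to encode the relation as a generating-function identity in $z$, matching coefficients of $z^{-n-1}$ on both sides. The inputs are two facts we already know: the translation identity $Y(Ta,z) = \partial_z Y(a,z)$ for the ordinary state-field correspondence, and the formula $\ga_{Ta} = \ga_a + 1$ from the lemma just preceding this statement. From the definition
\[
Z(a,z) = (1+\hbar z)^{\ga_a+p} Y(a,z),
\]
I would first differentiate to get $(1+\hbar z)^{\ga_a+p}\partial_z Y(a,z) = \partial_z Z(a,z) - \hbar(\ga_a+p)(1+\hbar z)^{\ga_a+p-1}Y(a,z)$, and then multiply through by $(1+\hbar z)$ to obtain
\[
Z(Ta,z) = (1+\hbar z)\partial_z Z(a,z) - \hbar(\ga_a+p) Z(a,z).
\]
Reading off the coefficient of $z^{-n-1}$ on each side — where $(1+\hbar z)\partial_z Z(a,z)$ contributes $-n\, a_{[n-1]} - (n+1)\hbar\, a_{[n]}$ — and applying both sides to $b$ yields (\ref{Tindeqn}) at once.

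For part (2), the key observation is that (\ref{Tindeqn}), evaluated at $n = k$, solves for $a_{[k-1]}b$:
\[
a_{[k-1]}b = -\frac{1}{k}\bigl[(Ta)_{[k]}b + \hbar(\ga_a + p + k + 1)\, a_{[k]}b\bigr].
\]
The division by $k$ is permitted precisely because $k \leq -1$. Both terms on the right are of the form $(\text{something in } V)_{[k]}b$, so the right-hand side lies in $V_{[k]}V$. Hence $V_{[k-1]}V \subseteq V_{[k]}V$, and a straightforward downward induction starting at $k$ gives $V_{[j]}V \subseteq V_{[k]}V$ for every $j \leq k \leq -1$.

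The only place any care is required is in the generating-function bookkeeping for part (1) — keeping track of the index shift between $\partial_z Z$ and the coefficient of $z^{-n-1}$, and correctly combining the two contributions coming from the product rule — so I would do that step explicitly in the write-up. Part (2) is then a one-line consequence of (\ref{Tindeqn}).
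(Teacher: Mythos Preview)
Your proof is correct and follows essentially the same approach as the paper: both derive the operator identity $Z(Ta,z) = \bigl[(1+\hbar z)\partial_z - \hbar(\ga_a+p)\bigr]Z(a,z)$ from $Y(Ta,z)=\partial_z Y(a,z)$ and $\ga_{Ta}=\ga_a+1$, then equate coefficients of $z^{-n-1}$. Your treatment of part~(2) spells out exactly the induction the paper leaves implicit in ``follows immediately.''
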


\begin{proof}
For the first part we have
\begin{align*}
Z(Ta, z)
&= (1 + \hbar z)^{\ga_a+1+p} Y(Ta, z)
= (1 + \hbar z)^{\ga_a+1+p} \partial_z Y(a, z) \\
&= (1 + \hbar z) \partial_z [(1 + \hbar z)^{\ga_a+p} Y(a, z)] - \hbar(\ga_a + p) (1 + \hbar z)^{\ga_a+p} Y(a, z) \\
&= [(1 + \hbar z) \partial_z - \hbar(\ga_a + p)] Z(a, z),
\end{align*}
equate coefficients of $z^{-n-1}$ to get (\ref{Tindeqn}). The second part follows immediately.
\end{proof}

Now suppose $a \in V_\G$, so that $\eps_a = 0$ and $\D_a = \ga_a$. From Lemma \ref{Tind} we have
\begin{align*}
[(T + \hbar H)a] *_p b
= {} & \sum_{m = 0}^p \binom{-p-1}{m} \hbar^{-p-m} [(T + \hbar H)a]_{[-p-1-m]} b \\
= {} & \sum_{m=0}^{p} \binom{-p-1}{m} \hbar^{-p-m} \left[(p+m+1) a_{[-p-2-m]}b + \hbar m a_{[-p-1-m]}b \right].
\end{align*}
If we put $m = n+1$ in the second summand and manipulate it slightly we can combine the sums to obtain
\begin{align}
[(T + \hbar H)a] *_p b = (2p+1) \binom{-p-1}{p} \hbar^{-2p} a_{[-2p-2]}b.
\end{align}
If $b \in V_\G$ too then $[\eps_a] + [\eps_b] = [0]$ and so $((T + \hbar H)V_\G) *_p V_\G \subseteq J_{p, \G, \hbar}$.

\begin{rmrk}
This calculation shows that in the untwisted case, where $\eps_a = 0$ for all $a$, the set $J_{p, \hbar}$ may be defined simply as the ideal in $(V, *_p)$ generated by $(T + H)V$ rather than the more complicated equation (\ref{Jdef}).
\end{rmrk}


\subsection{The Borcherds identity} \label{borsec}

Let $(V, Y, \vac)$ be a vertex algebra, recall \cite{Kac} the $n^\text{th}$ product of quantum fields:
\begin{align} \label{nthproddef}
\begin{split}
Y(a, w)_{(n)}Y(b, w)
= {} & \res_z \left[Y(a, z) Y(b, w) i_{z, w} (z-w)^n \right. \\
& \left. - p(a, b) Y(b, w) Y(a, z) i_{w, z} (z-w)^n \right].
\end{split}
\end{align}
Taking the residue $\res_z$ of the Borcherds identity shows that
\begin{align}\label{nthprodiden}
Y(a, w)_{(n)}Y(b, w) = Y(a_{(n)}b, w).
\end{align}
This is called the $n^\text{th}$ product identity. In this section we recast the $n^\text{th}$ product identity and the Borcherds identity, which are both in terms of $Y(a, w)$, in terms of $Z(a, w)$.

\begin{thm} \label{modn}
\begin{align}\label{modnthprodiden}
(1 + \hbar w)^{n + p + 1 - \chi(a, b)} Z(a_{[n]}b, w) = Z(a, w)_{(n)}Z(b, w).
\end{align}
Here the $n^\textrm{th}$ product of quantum fields is defined as above, i.e., equation (\ref{nthproddef}) but with $Z$ in place of $Y$.
\end{thm}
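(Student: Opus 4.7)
The plan is to push the Borcherds identity for $Y$ through the definition $Z(a, z) = (1 + \hbar z)^{\ga_a + p} Y(a, z)$. I would start from the delta-function form of the Borcherds identity,
\[
Y(a, z) Y(b, w) i_{z, w}(z-w)^n - p(a, b) Y(b, w) Y(a, z) i_{w, z}(z-w)^n = \sum_{j \in \Z_+} Y(a_{(n+j)} b, w) \partial_w^{(j)} \delta(z, w),
\]
multiply both sides by $(1 + \hbar z)^{\ga_a + p}(1 + \hbar w)^{\ga_b + p}$ so that the left-hand side becomes the natural integrand for $Z(a, w)_{(n)} Z(b, w)$, and then apply $\res_z$.

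The key technical step is a delta-calculus identity for relocating the factor $(1 + \hbar z)^{\ga_a + p}$ across the derivatives of the delta function. Taylor expanding via $1 + \hbar z = (1 + \hbar w) + \hbar(z - w)$ gives $(1 + \hbar z)^{\al} = \sum_{k \in \Z_+} \binom{\al}{k} \hbar^k (1 + \hbar w)^{\al - k} (z - w)^k$, and combined with the standard relation $(z - w)^k \partial_w^{(j)} \delta(z, w) = \partial_w^{(j-k)} \delta(z, w)$ for $k \leq j$ (zero otherwise), this yields
\[
(1 + \hbar z)^{\al} \partial_w^{(j)} \delta(z, w) = \sum_{k = 0}^{j} \binom{\al}{k} \hbar^k (1 + \hbar w)^{\al - k} \partial_w^{(j - k)} \delta(z, w).
\]
Substituting this with $\al = \ga_a + p$ and then taking $\res_z$, which satisfies $\res_z \partial_w^{(m)} \delta(z, w) = \delta_{m, 0}$, collapses the double sum over $(j, k)$ to a single sum indexed by $k$:
\[
Z(a, w)_{(n)} Z(b, w) = \sum_{k \in \Z_+} \binom{\ga_a + p}{k} \hbar^k (1 + \hbar w)^{\ga_a + \ga_b + 2p - k} Y(a_{(n + k)} b, w).
\]

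To finish, I would expand the claimed left-hand side. By the lemma recorded just above, $\ga_{a_{(n + k)} b} = \ga_a + \ga_b - n - k - 1 + \chi(a, b)$, so
\[
Z(a_{[n]} b, w) = \sum_{k \in \Z_+} \binom{\ga_a + p}{k} \hbar^k (1 + \hbar w)^{\ga_a + \ga_b + p - n - k - 1 + \chi(a, b)} Y(a_{(n + k)} b, w),
\]
and multiplying by $(1 + \hbar w)^{n + p + 1 - \chi(a, b)}$ produces exactly the expression just obtained for $Z(a, w)_{(n)} Z(b, w)$. The main obstacle is the formal-distribution bookkeeping around $(1 + \hbar z)^{\ga_a + p}$, since $\ga_a + p$ need not be an integer and so this factor is a genuine formal power series rather than a Laurent polynomial; once the Taylor-based identity above is in place the rest is a matter of matching exponents, with the correction $-n - 1 + \chi(a, b)$ emerging naturally from the formula for $\ga_{a_{(n + k)} b}$.
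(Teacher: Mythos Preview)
Your proof is correct and follows essentially the same route as the paper: both arguments hinge on the Taylor/binomial expansion $(1+\hbar z)^{\ga_a+p} = \sum_k \binom{\ga_a+p}{k}\hbar^k (z-w)^k (1+\hbar w)^{\ga_a+p-k}$ and the formula $\ga_{a_{(n+k)}b} = \ga_a + \ga_b - n - k - 1 + \chi(a,b)$ to match exponents. The only cosmetic difference is that the paper starts from the left-hand side and invokes the $n^{\text{th}}$ product identity for $Y$ directly (which is $\res_z$ of Borcherds already taken), whereas you start from the delta-function Borcherds identity and take $\res_z$ at the end; the intermediate expression $\sum_k \binom{\ga_a+p}{k}\hbar^k (1+\hbar w)^{\ga_a+\ga_b+2p-k} Y(a_{(n+k)}b,w)$ is reached identically in both.
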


\begin{proof}
The left hand side is
\begin{align*}
(1 + \hbar w)^{n + p + 1 - \chi(a, b)} &  \sum_{j \in \Z_+} \binom{\ga_a + p}{j}\hbar^j Z(a_{(n+j)}b, w) \\
= & \sum_{j \in \Z_+} \binom{\ga_a + p}{j} \hbar^j (1 + \hbar w)^{\ga_a + \ga_b + 2p - j} Y(a_{(n+j)}b, w)
\end{align*}
(using $\ga_{a_{(k)}b} = \ga_a + \ga_b + \chi(a, b) - k - 1$). Now we use equation (\ref{nthprodiden}) to rewrite this as
\begin{align*}
\res_z \Biggl[ \sum_{j \in \Z_+} & \binom{\ga_a + p}{j} \hbar^j (z - w)^j (1 + \hbar w)^{\ga_a + \ga_b + 2p - j} \Biggr. \times \\
& \times \Biggl. \left\{Y(a, z)Y(b, w) i_{z, w}(z - w)^n - p(a, b) Y(b, w)Y(a, z) i_{w, z}(z - w)^n\right\} \Biggr].
\end{align*}
But
\begin{align*}
\sum_{j \in \Z_+} \binom{\ga_a + p}{j} \hbar^j (z - w)^j (1 + \hbar w)^{\ga_a + \ga_b + 2p - j} = (1 + \hbar z)^{\ga_a + p}(1 + \hbar w)^{\ga_b + p},
\end{align*}
so the left hand side of equation (\ref{modnthprodiden}) becomes
\begin{align*}
\res_z \left[ Z(a, z) Z(b, w) i_{z, w}(z - w)^n - p(a, b) Z(b, w) Z(a, z) i_{w, z}(z - w)^n \right],
\end{align*}
which is the right hand side.
\end{proof}

We use Theorem \ref{modn} to write down the modified analog of the Borcherds identity.
\begin{thm} \label{modbor}
For all $a, b, c \in V$,
\begin{align}
\begin{split}
\sum_{j \in \Z_+} & (1 + \hbar w)^{n+j+p+1-\chi(a, b)} Z(a_{[n+j]}b, w) c \, \partial_w^{(j)} \delta(z, w) \\
& = Z(a, z) Z(b, w) c \, i_{z, w}(z - w)^n - p(a, b) Z(b, w) Z(a, z) c \, i_{w, z}(z - w)^n.
\end{split} \label{defbor}
\end{align}
\end{thm}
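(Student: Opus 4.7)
The plan is to deduce the modified Borcherds identity directly from the standard Borcherds identity for $Y$. I would apply the standard Borcherds identity (as operators) to the vector $c$, and then multiply both sides by the scalar factor $(1+\hbar z)^{\ga_a+p}(1+\hbar w)^{\ga_b+p}$. On the right-hand side this factor combines directly with the $Y$-fields, producing $Z(a,z)$ and $Z(b,w)$ in exactly the pattern required for the right-hand side of (\ref{defbor}).

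The left-hand side requires more work because the factor $(1+\hbar z)^{\ga_a+p}$ depends on $z$ while the delta function derivatives $\partial_w^{(j)}\delta(z,w)$ mix $z$ and $w$. Here I would invoke the identity
\[
f(z)\,\partial_w^{(j)}\delta(z,w) = \sum_{i=0}^{j} \partial_w^{(i)}f(w)\cdot \partial_w^{(j-i)}\delta(z,w),
\]
obtained by applying $\partial_w^{(j)}$ to $f(z)\delta(z,w) = f(w)\delta(z,w)$ via the Leibniz rule. Taking $f(z) = (1+\hbar z)^{\ga_a+p}$ gives $\partial_w^{(i)}f(w) = \binom{\ga_a+p}{i}\hbar^i(1+\hbar w)^{\ga_a+p-i}$, so the left-hand side becomes a double sum over $i,j$ of terms of the form $\binom{\ga_a+p}{i}\hbar^i(1+\hbar w)^{\ga_a+\ga_b+2p-i}\,Y(a_{(n+j)}b, w)c\,\partial_w^{(j-i)}\delta(z,w)$.

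After swapping the order of summation and reindexing $j \mapsto j+i$, the inner sum over $i$ collapses via the definition $a_{[n+j]}b = \sum_i \binom{\ga_a+p}{i}\hbar^i a_{(n+j+i)}b$ combined with the linear extension of $Z$, which uses the conformal weight rule $\ga_{a_{(k)}b} = \ga_a + \ga_b - k - 1 + \chi(a,b)$ from the lemma at the end of section \ref{prelim}. The net prefactor of $Z(a_{[n+j]}b, w)c\,\partial_w^{(j)}\delta(z,w)$ then equals $(1+\hbar w)^{\ga_a+\ga_b+2p-i}/(1+\hbar w)^{\ga_{a_{(n+j+i)}b}+p}$ per summand, which consolidates to $(1+\hbar w)^{n+j+p+1-\chi(a,b)}$ as required.

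The conceptual content is minimal; the main obstacle is purely bookkeeping, namely verifying that the powers of $(1+\hbar w)$ match precisely. In particular, the appearance of $\chi(a,b)$ in the exponent on the left-hand side of (\ref{defbor}) arises exactly from its appearance in the conformal weight formula for $\ga_{a_{(k)}b}$, and the shift by $p+1$ comes from the extra $p$ in the definition of $Z$ together with the $-1$ in the same formula.
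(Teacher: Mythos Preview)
Your proposal is correct and is essentially the same argument the paper has in mind: the paper does not spell out a proof but simply refers to Theorem~2.3 of \cite{DK}, and the computation you outline---multiplying the ordinary Borcherds identity by $(1+\hbar z)^{\ga_a+p}(1+\hbar w)^{\ga_b+p}$ and using the Leibniz-type delta-function identity to trade $(1+\hbar z)^{\ga_a+p}$ for powers of $(1+\hbar w)$---is exactly that argument, and is also precisely what the paper carries out in detail for the $n^{\text{th}}$ product version in Theorem~\ref{modn}. Your bookkeeping of the exponents, in particular the appearance of $\chi(a,b)$ via $\ga_{a_{(k)}b} = \ga_a+\ga_b-k-1+\chi(a,b)$, is correct.
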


\begin{proof}
The theorem and its proof are essentially the same as Theorem 2.3 of \cite{DK} and its proof.
\end{proof}

We use Theorem \ref{modbor} to obtain an expression for $(a_{[n]}b)_{[k]}c$. We begin by extracting the $z^{-m-1}$ coefficient, obtaining
\begin{align*}
& \sum_{j \in \Z_+} \binom{m}{j} (1 + \hbar w)^{n+j+p+1-\chi(a, b)} Z(a_{[n+j]}b, w) c w^{m-j} \\
& \phantom{\lim} = \sum_{j \in \Z_+} (-1)^j \binom{n}{j} \left[ a_{[m+n-j]}\left(Z(b, w)c \right) w^j - p(a, b) (-1)^n Z(b, w)(a_{[m+j]}c)w^{n-j} \right].
\end{align*}
Next we multiply through by $(1 + \hbar w)^{-n-p-1+\chi(a, b)}$, expand $(1+\hbar w)^{-n-p-1+\chi(a, b)}$ in positive powers of $w$, and extract the $w^{-k-1}$ coefficient, obtaining
\begin{align*}
& \sum_{i, j \in \Z_+} \binom{m}{j} \binom{j}{i} \hbar^i (a_{[n+j]}b)_{[i+k+m-j]}c \\
& \phantom{\lim} = \sum_{i, j \in \Z_+} (-1)^j \hbar^i \binom{n}{j} \binom{-n-p-1+\chi(a, b)}{i} \times \\
& \phantom{\lim \lim} \times \Biggl[ a_{[m+n-j]}\left(b_{[i+j+k]}c \right) - p(a, b) (-1)^n b_{[i-j+n+k]}(a_{[m+j]}c) \Biggr].
\end{align*}

Finally we put $m = 0$, to obtain
\begin{align}
\begin{split}
(a_{[n]}b)_{[k]}c
& = \sum_{i, j \in \Z_+} (-1)^j \hbar^i \binom{n}{j} \binom{-n-p-1+\chi(a, b)}{i} \times \\
& \phantom{\lim} \times \Biggl[ a_{[n-j]}\left(b_{[i+j+k]}c \right) - p(a, b) (-1)^n b_{[i-j+n+k]}(a_{[j]}c) \Biggr].
\end{split} \label{expbor}
\end{align}

If $n, k \leq -p-1$, then $-n-p-1+\chi(a, b) \geq 0$ and the sum on the right hand side of (\ref{expbor}) runs over $0 \leq i \leq -n-p-1+\chi(a, b)$. Therefore $i-j+n+k \leq -2p-2+\chi(a, b)$ and the $b_{[\cdot]}(a_{[\cdot]}c)$ terms all lie in $J_{p, \G, \hbar}$. If we put $n = -2p-2+\chi(a, b)$ and $k \leq -p-1$ we see that
\begin{align*}
& (a_{[-2p-2+\chi(a, b)]}b)_{[k]}c \\
& \equiv \sum_{i, j \in \Z_+} (-1)^j \binom{-2p-2+\chi(a, b)}{j} \binom{p+1}{i} \hbar^i a_{[-2p-2+\chi(a, b)-j]}\left(b_{[i+j+k]}c\right) \pmod{J_{p, \G, \hbar}}.
\end{align*}
This implies that $\C[\hbar, \hbar^{-1}] \{a_{[-2p-2-\chi(a, b)]}b | [\eps_a] + [\eps_b] = [0]\}$ is a right ideal with respect to the products $a_{[k]}b$ for $k \leq -p-1$, and thus a right ideal with respect to $*_p$. Combining this with the calculation at the end of section \ref{defdef} shows that $J_{p, \G, \hbar}$ is a right ideal of $(V_\G, *_p)$.


\subsection{\texorpdfstring{The skew-symmetry formula for $Z(a, w)$}{The skew-symmetry formula for Z(a, w)}} \label{sksmsec}

Recall the skew-symmetry formula \cite{Kac},
\begin{align*}
Y(b, z) a = p(a, b) e^{zT} Y(a, -z) b
\end{align*}
for all $a, b \in V$. In this section we recast this in terms of the modified field $Z(a, w)$.

If $c \in V_\G$ then $T c \equiv \hbar (-\D_c) c \pmod{J_{p, \G, \hbar}}$ by the definition of $J_{p, \G, \hbar}$, hence
\begin{align}
T^{(k)}c
&\equiv \hbar^k \binom{-\D_c}{k} c \nonumber \\
\mbox{and} \qquad
e^{zT} c
&\equiv \sum_{j \in \Z_+} \hbar^j \binom{-\D_c}{j} c z^j
= (1 + \hbar z)^{-\D_c} c. \label{ezt}
\end{align}
Here, and further, we often write $\equiv$ for $\equiv \text{mod} J_{p, \G, \hbar}$.

Assume now that $[\eps_a] + [\eps_b] = [0]$; this implies that all $a_{(n)}b$ and $b_{(n)}a$ lie in $V_\G$. From (\ref{ezt}) we have
\begin{align*}
e^{zT} Y(a, -z)b
& = \sum_{n \in \Z} (-z)^{-n-1} e^{zT}(a_{(n)}b) \\
& \equiv (1 + \hbar z)^{-\D_a - \D_b} \sum_{n \in \Z} (-z)^{-n-1} (1 + \hbar z)^{n+1} a_{(n)} b \\
& = (1 + \hbar z)^{-\D_a - \D_b} Y\left(a, \frac{-z}{1 + \hbar z}\right) b
= (1 + \hbar z)^{\eps_a - \D_b + p} Z\left(a, \frac{-z}{1 + \hbar z}\right) b.
\end{align*}
Therefore
\begin{align}
Z(b, z) a
&= p(a, b) (1 + \hbar z)^{\D_b + \eps_b + p} e^{zT} Y(a, -z) b \nonumber \\
&\equiv p(a, b) (1 + \hbar z)^{p-\ga_a+\eps_a+\eps_b} Y\left(a, \frac{-z}{1 + \hbar z}\right) b \label{Yswap} \\
&\equiv p(a, b) (1 + \hbar z)^{2p+\eps_a+\eps_b} Z\left(a, \frac{-z}{1 + \hbar z}\right) b. \label{swap}
\end{align}

We expand (\ref{Yswap}) and (\ref{swap}) and equate coefficients to obtain the equations:
\begin{align}
b_{[n]}a &\equiv p(a, b) \sum_{j \in \Z_+} (-1)^{n+j+1} \binom{(p-\ga_a+\eps_a+\eps_b)+1+n+j}{j} \hbar^j a_{(n+j)}b, \label{swapduple} \\
\mbox{and} \quad b_{[n]}a &\equiv p(a, b) \sum_{j \in \Z_+} (-1)^{n+j+1} \binom{2p+\eps_a+\eps_b+1+n+j}{j} \hbar^j a_{[n+j]}b, \label{rough}
\end{align}
respectively. We shall use (\ref{swapduple}) in appendix \ref{appcalc}, we need it there with $\eps_a \neq 0$. For now we use (\ref{rough}) and we only need it for $a, b \in V_\G$. Let us write (\ref{rough}) in the slightly different form:
\begin{align} \label{swapexpl}
b_{[-p-1-m]}a
\equiv p(a, b) (-1)^{p-m} \sum_{j \in \Z_+} \binom{p-m+j}{j} (-\hbar)^j a_{[-p-1-m+j]}b,
\end{align}
for $a, b \in V_\G$.

Substituting (\ref{swapexpl}) into the definition of $b *_p a$ yields
\begin{align} \label{baexp}
b *_p a
\equiv p(a, b) \sum_{m=0}^p \sum_{j \in \Z_+} \binom{-p-1}{m} \binom{p-m+j}{j} (-\hbar)^{-p-m+j} a_{[-p-1-m+j]}b.
\end{align}
For $\alpha \in \Z$, let us consider the coefficient of $a_{[-p-1-\alpha]}b$ in (\ref{baexp}). If $\alpha > p$ the coefficient is $0$. If $\alpha \leq p$ the coefficient is
\begin{align*}
p(a, b) (-\hbar)^{-p-\alpha} \sum_{j \in \Z_+} \binom{-p-1}{\alpha+j} \binom{p-\alpha}{j}
= p(a, b) (-\hbar)^{-p-\alpha} \binom{-\alpha-1}{p},
\end{align*}
where we have used
\begin{align*}
\sum_{j \in \Z_+} \binom{-p-1}{\alpha+j} \binom{p-\alpha}{j}
& = [\xi^{\alpha}]: (1+\xi)^{-p-1}(1+\xi^{-1})^{p-\alpha}
= [\xi^p]: (1+\xi)^{-p-1} (\xi+1)^{p-\alpha} \\
& = [\xi^p]: (1+\xi)^{-\alpha-1}
= \binom{-\alpha-1}{p}.
\end{align*}

The terms in equation (\ref{baexp}) with $0 \leq \alpha \leq p$ may be gathered together and, using $(-1)^n \binom{-m-1}{n} = (-1)^m \binom{-n-1}{m}$ for $m, n \in \Z_+$, reduced to $p(a, b) a *_p b$. The sum of the remaining terms (i.e., those with $\alpha < 0$) is
\begin{align*}
p(a, b) \sum_{\alpha < 0} (-\hbar)^{-p-\alpha} \binom{-\alpha-1}{p} a_{[-p-1-\alpha]}b
&= -\hbar p(a, b) \sum_{k \in \Z_+} (-\hbar)^{k} \binom{p+k}{p} a_{[k]}b \\
&= -\hbar p(a, b) \sum_{k \in \Z_+} \hbar^k \binom{-p-1}{k} a_{[k]}b \\
&= -\hbar p(a, b) [a, b]_{\hbar},
\end{align*}
where $k = -p-1-\alpha$ and
\begin{align} \label{hbarbrak}
\begin{split}
[a, b]_{\hbar}
& = \res_z (1 + \hbar z)^{-p-1} Z(a, z) b
= \sum_{j \in \Z_+} \binom{-p-1}{j} \hbar^j a_{[j]}b \\
& = \res_z (1 + \hbar z)^{\ga_a - 1} Y(a, z) b
= \sum_{j \in \Z_+} \binom{\ga_a - 1}{j} \hbar^j a_{(j)}b.
\end{split}
\end{align}

We have proved that if $a, b \in V_\G$, then
\begin{align}
a *_p b - p(a, b) b *_p a \equiv \hbar [a, b]_{\hbar} \pmod{J_{p, \G, \hbar}}. \label{skewsymm}
\end{align}
We call equation (\ref{skewsymm}) the skew-symmetry formula.


\subsection{\texorpdfstring{The bracket $[\cdot, \cdot]_{\hbar}$}{The bracket}} \label{derivprop}

In this section we prove that $J_{p, \G, \hbar}$ is a left ideal of $(V_{\G, \hbar}, *_p)$. This requires us to first prove some identities for $[\cdot, \cdot]_{\hbar}$.
\begin{lemma} \label{brakderivlemma}
If $a, b_{[n]}c \in V_\G$, (so that $\eps_a = 0$ and $[\eps_b] + [\eps_c] = [0]$), then
\begin{align}
[a, b_{[n]}c]_{\hbar} = ([a, b]_{\hbar})_{[n]} c + p(a, b) b_{[n]}([a, c]_{\hbar}). \label{brakderiv}
\end{align}
\end{lemma}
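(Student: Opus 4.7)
The plan is to expand both sides explicitly and reduce to an identity of binomial coefficients. Using the second expression for the bracket in (\ref{hbarbrak}), namely $[a,v]_{\hbar} = \sum_{j \in \Z_+} \binom{\ga_a - 1}{j} \hbar^j a_{(j)} v$, together with $b_{[n]} c = \sum_{i \in \Z_+} \binom{\ga_b + p}{i} \hbar^i b_{(n+i)} c$, I would first write
\begin{align*}
[a, b_{[n]} c]_{\hbar} = \sum_{i, j \in \Z_+} \binom{\ga_a - 1}{j}\binom{\ga_b + p}{i}\hbar^{i+j}\, a_{(j)} b_{(n+i)} c,
\end{align*}
and then invoke the standard Borcherds commutator
\begin{align*}
a_{(j)} b_{(n+i)} c = p(a,b)\, b_{(n+i)} a_{(j)} c + \sum_{\ell \in \Z_+} \binom{j}{\ell} (a_{(\ell)}b)_{(j+n+i-\ell)} c,
\end{align*}
which is the special case $n=0$ of (\ref{norbor}).

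The commuted term $p(a,b) b_{(n+i)} a_{(j)} c$ factorises cleanly: the $j$-sum rebuilds $[a, c]_{\hbar}$, and the $i$-sum then rebuilds $b_{[n]}$, giving the summand $p(a, b)\, b_{[n]}[a, c]_{\hbar}$ exactly. For the remaining term I would change variables $\mu = j - \ell$ and set $m = \mu + i$, so as to isolate the coefficient of $(a_{(\ell)} b)_{(n+m)} c$ in the resulting expression. Using the standard rearrangement $\binom{\ga_a-1}{\mu+\ell}\binom{\mu+\ell}{\ell} = \binom{\ga_a-1}{\ell}\binom{\ga_a-1-\ell}{\mu}$, followed by Vandermonde's identity summed over $\mu + i = m$, this coefficient simplifies to $\binom{\ga_a-1}{\ell}\binom{\ga_a + \ga_b - \ell - 1 + p}{m}\hbar^{\ell+m}$.

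Finally I would recognise this as the expansion of $([a,b]_{\hbar})_{[n]} c$: since $\eps_a = 0$ forces $\chi(a,b) = 0$, the formula $\ga_{a_{(\ell)}b} = \ga_a + \ga_b - \ell - 1 + \chi(a,b)$ gives $\ga_{a_{(\ell)}b} + p = \ga_a + \ga_b - \ell - 1 + p$, so the coefficient is exactly $\binom{\ga_a-1}{\ell}\binom{\ga_{a_{(\ell)}b} + p}{m}\hbar^{\ell+m}$, matching (\ref{hbarbrak}) and (\ref{asndef}) applied to $[a,b]_\hbar$. The only real obstacle is keeping the triple sum of binomial coefficients organised and verifying that the relevant conformal weight data match up; conceptually everything is routine once the Borcherds commutator is in play, and the identity holds exactly in $V$ — no reduction modulo $J_{p, \G, \hbar}$ is required.
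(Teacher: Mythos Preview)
Your argument is correct. You expand both sides in terms of the original $Y$-products $a_{(j)}$, $b_{(n+i)}$, apply the ordinary Borcherds commutator, and then reassemble via a Vandermonde identity; every step checks out, including the crucial use of $\chi(a,b)=0$ (from $\eps_a=0$) to identify $\ga_{a_{(\ell)}b}+p$ with $\ga_a+\ga_b-\ell-1+p$.

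The paper proceeds differently: it works entirely at the level of the modified fields $Z(a,z)$, computing $[a,(Z(b,w)c)]_{\hbar}$ as a residue $\res_z (1+\hbar z)^{-p-1} Z(a,z)Z(b,w)c$, commuting the two $Z$-fields, and evaluating the commutator via the modified Borcherds identity (\ref{defbor}) with $n=0$. The delta-function residue then collapses the infinite sum to $(1+\hbar w)^{-\chi(a,b)} Z([a,b]_{\hbar},w)c$, and extracting the $w^{-n-1}$ coefficient finishes. So the paper's proof is a generating-function argument that leans on the machinery of section \ref{borsec}, whereas yours is a direct coefficient computation using only (\ref{norbor}) and a standard binomial rearrangement. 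Your route is more elementary and self-contained (it does not invoke Theorem \ref{modbor}); the paper's route is shorter once that machinery is in hand, and it illustrates why the $Z$-formalism was introduced in the first place. Both establish the identity exactly in $V$, with no reduction modulo $J_{p,\G,\hbar}$.
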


\begin{proof}
We begin with
\begin{align*}
\left[a, \left( Z(b, w)c \right)\right]_{\hbar}
= {} & \res_z (1 + \hbar z)^{-p-1} Z(a, z) Z(b, w) c \\
= {} & p(a, b) \res_z (1 + \hbar z)^{-p-1} Z(b, w) Z(a, z) c \\
& + \res_z (1 + \hbar z)^{-p-1} [Z(a, z), Z(b, w)] c \\
= {} & p(a, b) Z(b, w) [a, c]_{\hbar} + \res_z (1 + \hbar z)^{-p-1} [Z(a, z), Z(b, w)]c.
\end{align*}

Now we use (\ref{defbor}) with $n = 0$ to expand the second term in the last line; it becomes
\begin{align*}
& \res_z (1 + \hbar z)^{-p-1} \sum_{j \in \Z_+} (1 + \hbar w)^{p+1+j-\chi(a, b)} Z(a_{[j]}b, w) \partial_w^{(j)}(z, w) \\
&= \sum_{j \in \Z_+} (1 + \hbar w)^{p+1+j-\chi(a, b)} Z(a_{[j]}b, w) \res_z (1 + \hbar z)^{-p-1} \partial_w^{(j)}(z, w) \\
&= \sum_{j \in \Z_+} (1 + \hbar w)^{p+1+j-\chi(a, b)} Z(a_{[j]}b, w) \binom{-p-1}{j} \hbar^j (1 + \hbar w)^{-p-1-j} \\
&= (1 + \hbar w)^{-\chi(a, b)} Z([a, b]_{\hbar}, w) c.
\end{align*}
So we have proved that
\begin{align*}
[a, (Z(b, w) c)]_{\hbar} = p(a, b) Z(b, w) [a, c]_{\hbar} + (1 + \hbar w)^{-\chi(a, b)} Z([a, b]_{\hbar}, w) c.
\end{align*}
If $\eps_a = 0$, then $\chi(a, a') = 0$ for all $a' \in V$. Extracting the $w^{-n-1}$ coefficient yields equation (\ref{brakderiv}).
\end{proof}

Let $a, b_{[n]}c \in V_\G$, observe that $\chi(a_{(k)}b, c) = \chi(b, a_{(k)}c) = \chi(b, c)$. Using (\ref{skewsymm}), (\ref{brakderiv}) and the fact that $J_{p, \G, \hbar}$ is a right ideal of $V_{\G, \hbar}$, we have
\begin{align*}
a *_p (b_{[-2p-2+\chi(b, c)]}c)
\equiv {} & p(a, b) p(a, c) (b_{[-2p-2+\chi(b, c)]}c) *_p a + \hbar [a, (b_{[-2p-2+\chi(b, c)]}c)]_{\hbar} \\
\equiv {} & p(a, b) p(a, c) (b_{[-2p-2+\chi(b, c)]}c) *_p a \\
&+ \hbar ([a, b]_{\hbar})_{[-2p-2+\chi(b, c)]}c + p(a, b) \hbar b_{[-2p-2+\chi(b, c)]}([a, c]_{\hbar}) \\
\equiv {} & 0.
\end{align*}
Next observe that if $a, b \in V_\G$ then
\begin{align*}
[(T + \hbar H)a, b]_{\hbar}
&= \res_z (1 + \hbar z)^{\ga_a - 1} [(Ta)(z) + \hbar \ga_a a(z)]b \\
&= \res_z \partial_z \left[(1 + \hbar z)^{\ga_a} a(z)b\right] = 0.
\end{align*}
Therefore we have
\begin{align*}
b *_p [(T + \hbar H)a]
&\equiv p(a, b) [(T + \hbar H)a] *_p b - p(a, b) \hbar [(T + \hbar H)a, b]_{\hbar} \\
&\equiv 0.
\end{align*}

From the definition of $J_{p, \G, \hbar}$ in equation (\ref{Jdef}), and the remarks above we see that $(V_\G) *_p (J_{p, \G, \hbar}) \subseteq J_{p, \G, \hbar}$, i.e., $J_{p, \G, \hbar}$ is a left ideal in $(V_{\G, \hbar}, *_p)$. Since $J_{p, \G, \hbar}$ is also a right ideal, it is a $2$-sided ideal.


\subsection{\texorpdfstring{$\zhu_{p, \G, \hbar}(V)$ is associative}{Associativity}} \label{assocsec}

Let $a, b, c \in V_\G$, we use (\ref{expbor}) to expand $(a *_p b) *_p c$ directly and prove it equals $a *_p (b *_p c)$ modulo $J_{p, \G, \hbar}$. For brevity we put
\begin{align*}
D_r(b, c) = \sum_{m=0}^p \binom{-p-1}{m} \hbar^{-p-m} b_{[-p-1-m+r]}c,
\end{align*}
in particular, $D_0(b, c) = b *_p c$.

We expand $(a *_p b) *_p c$ using the definition (\ref{pthdef}), then we apply equation (\ref{expbor}) to $(a_{[-p-1-m]}b)_{[-p-1-n]}c$ to obtain
\begin{align}
(a *_p b) *_p c
& = \sum_{m=0}^p \sum_{n=0}^p \binom{-p-1}{m} \binom{-p-1}{n} \hbar^{-p-m} \hbar^{-p-n} (a_{[-p-1-m]}b)_{[-p-1-n]}c \nonumber \\
& \equiv \sum_{m=0}^p \sum_{i, j \in \Z_+} (-1)^j \binom{-p-1}{m} \binom{-p-1-m}{j} \binom{m}{i} \hbar^{-p-m+i} a_{[-p-1-m-j]}D_{i+j}. \label{assocorig}
\end{align}
We have omitted terms of the form $b_{[\cdot]}(a_{[\cdot]}c)$ here, because they lie in $J_{p, \G, \hbar}$ (see the remarks following equation (\ref{expbor})).

We change indices to $\beta = m+j$ and $\gamma = m-i$. Modulo $J_{p, \G, \hbar}$, the sum (\ref{assocorig}) becomes
\begin{align} \label{associnter}
\sum_{0 \leq \gamma \leq m \leq \beta \leq p} (-1)^{\beta-m} \binom{-p-1}{m} \binom{-p-1-m}{\beta-m} \binom{m}{\gamma} \hbar^{-p-\gamma} a_{[-p-1-\beta]}D_{\beta-\gamma}.
\end{align}
The range of the indices in the summation (\ref{associnter}) deserves explanation. Since $i \geq 0$, we have $\ga = m - i \leq m$. Since $j \geq 0$, we have $\beta = m + j \geq m$. Since $\binom{m}{\gamma}$ appears in the summand, $\gamma \geq 0$. Terms with $\beta > p$ lie in $J_{p, \G, \hbar}$, so $\beta \leq p$. Thus the summation is over the range $0 \leq \gamma \leq m \leq \beta \leq p$.

Fix $\gamma, \beta \in \Z$ such that $0 \leq \gamma \leq \beta \leq p$. Then the coefficient of $\hbar^{-p-\gamma} a_{[-p-1-\beta]}D_{\beta-\gamma}$ in (\ref{associnter}) is
\begin{align*}
& \sum_{m \in \Z} \binom{-p-1}{m} \binom{p+\beta}{\beta-m} \binom{m}{\gamma} \\
= & \sum_{m \in \Z} \frac{(-p-1)\cdots(-p-m)}{m!} \cdot \frac{(p+\beta)!}{(\beta-m)!(p+m)!} \cdot \frac{m!}{\gamma!(m-\gamma)!} \\
= & \sum_{m \in \Z} (-1)^m \frac{(p+\beta)!}{(\beta-m)!p!} \cdot \frac{1}{\gamma!(m-\gamma)!} \\
= & \frac{(p+\beta)!}{p!\gamma!(\beta-\gamma)!} \sum_{m \in \Z} (-1)^m \binom{\beta-\gamma}{\beta-m}.
\end{align*}
The final sum here is just the sum of all the coefficients in the expansion of $(-1)^{\beta} (1-\xi)^{\beta-\gamma}$, which is $0$ if $\gamma < \beta$ and $1$ if $\gamma = \beta$. In the latter case we obtain
\begin{align*}
(-1)^{\beta} \frac{(p+\beta)!}{p!\beta!} = \binom{-p-1}{\beta}
\end{align*}
as the coefficient. Therefore 
\begin{align*}
(a *_p b) *_p c
&\equiv \sum_{\beta=0}^p \binom{-p-1}{\beta} \hbar^{-p-\beta} a_{[-p-1-\beta]}(b *_p c) \pmod{J_{p, \G, \hbar}} \\
&= a *_p (b *_p c).
\end{align*}


\subsection{\texorpdfstring{$\zhu_{p, \G, \hbar}(V)$ is unital}{Unitality}} \label{unisec}

By the vacuum axiom, we have $\vac_{(n)}a = \delta_{n, -1}a$. Therefore
\begin{align*}
\vac_{[n]}a = \sum_{j \in \Z_+} \binom{p}{j} \hbar^j \vac_{(n+j)}a = \binom{p}{-n-1} \hbar^{-n-1} a,
\end{align*}
and hence
\begin{align*}
\vac *_p a
&= \sum_{m=0}^p \binom{-p-1}{m} \hbar^{-p-m} \vac_{[-p-1-m]}a \\
&= \sum_{m=0}^p \binom{-p-1}{m} \binom{p}{p+m} a = a.
\end{align*}
Note that $[\vac, a]_{\hbar} = 0$, so by skew-symmetry
\begin{align*}
a *_p \vac \equiv \vac *_p a \equiv a \pmod{J_{p, \G, \hbar}}
\end{align*}
as well.

It is possible that $J_{p, \G, \hbar} = V_\G$; in this case $\zhu_{p, \G, \hbar}(V) = 0$. Suppose $V$ has a nonzero $(\G, H)$-twisted positive energy module $M$, we may assume that $M_0 \neq 0$ without loss of generality. The identity element $[\vac] \in \zhu_{0, \G, \hbar=1}(V)$ has nonzero action on $M_0$; hence $\zhu_{0, \G, \hbar}(V) \neq 0$. The higher level Zhu algebras are all quotients of $V_\G$ by smaller ideals (see the next section), and so are also nonzero.


\section{\texorpdfstring{Homomorphisms between different Zhu algebras of $V$}{Homomorphisms between different Zhu algebras of V}} \label{maps}

To avoid confusion we write the level $p$ $n^{\text{th}}$ product as $a_{[n, p]}b$ in this section. Because
\begin{align*}
a_{[n, p]}b = \res_z z^n (1 + \hbar z)^{\ga_a + p} Y(a, z) b,
\end{align*}
we have
\begin{align*}
a_{[n, p]}b = a_{[n, p-1]}b + \hbar a_{[n+1, p-1]}b.
\end{align*}
This implies $J_{p, \G, \hbar} \subseteq J_{p-1, \G, \hbar}$. Furthermore
\begin{align*}
a *_p b
&= \sum_{m=0}^p \binom{-p-1}{m} \hbar^{-p-m} a_{[-p-1-m, p]}b \\
&= \sum_{m=0}^p \binom{-p-1}{m} \hbar^{-p-m} a_{[-p-1-m, p-1]}b + \sum_{m=0}^p \binom{-p-1}{m} \hbar^{-p-m+1} a_{[-p-m, p-1]}b \\
&= \sum_{n=1}^{p+1} \binom{-p-1}{n-1} \hbar^{-p-n+1} a_{[-p-n, p-1]}b + \sum_{m=0}^p \binom{-p-1}{m} \hbar^{-p-m+1} a_{[-p-m, p-1]}b \\
&\equiv \sum_{m=0}^p \binom{-p}{m} \hbar^{-p-m+1} a_{[-p-m, p-1]}b \pmod{J_{p-1, \G, \hbar}} \\
&= a *_{p-1} b.
\end{align*}

Hence, for all $p \geq 1$, the identity map on $V$ induces a surjective homomorphism of associative algebras $\phi_p : \zhu_{p, \G, \hbar}(V) \twoheadrightarrow \zhu_{p-1, \G, \hbar}(V)$, i.e., the diagram
\begin{align} \label{factorcomm}
\xymatrix{
V_{\G, \hbar} \ar@{->>}[d]_{\pi_p} \ar@{->>}[dr]^{\pi_{p-1}} \\
\zhu_{p, \G, \hbar}(V) \ar@{->>}[r]_{\phi_p} & \zhu_{p-1, \G, \hbar}(V) \\
}
\end{align}
commutes.


\section{Representation theory} \label{repnthry}


In this section let us fix $\hbar=1$ and write $\zhu_{p, \G}(V) = \zhu_{p, \G, \hbar=1}(V)$. When discussing categories of modules we write $\perep(V)$ for the category of $(\G, H)$-twisted positive energy $V$-modules. Morphisms in $\perep(V)$ are parity-preserving linear maps $f : M_1 \rightarrow M_2$ such that
\begin{itemize}
\item $f(a^{M_1}_{(n)}x) = a^{M_2}_{(n)}f(x)$ for all $a \in V$, $x \in M_1$, $n \in [\ga_a]$

\item $\deg f(x) = \deg x$ for all $x \in M$.
\end{itemize}
If the degrees of all elements in a module are shifted by a fixed amount then the resulting module is essentially the same, but it is convenient for us not to identify such modules.

\subsection{\texorpdfstring{The Restriction Functor $\Omega_p$}{The Restriction Functor}} \label{restrictfunct}

\begin{defn}[Restriction functor $\Omega_p$]
For $M \in \perep(V)$ let $\Omega_p(M) = M_p$, endowed with an action of $\zhu_{p, \G}(V)$ via $[a] x = a_0^M x$ (cf. section \ref{motivation}). If $f : M \rightarrow M'$ is a morphism in $\perep(V)$ then let $\Omega_p(f) = f|_{M_p}$. $\Omega_p$ is a functor from the category $\perep(V)$ to the category $\rep(\zhu_{p, \G}(V))$ of $\zhu_{p, \G}(V)$-modules.
\end{defn}

The following lemma is adapted from \cite{Li}, Lemma 6.1.1.
\begin{lemma} \label{lithesis}
Let $V$ be a $\G/\Z$-graded vertex algebra, and let $M \in \perep(V)$. Fix $a, b \in V$ of homogeneous conformal weight, $x \in M$, $m \in [\eps_a]$, and $k \in [\eps_b]$. There exists $c \in V$ such that
\begin{align*}
a^M_m(b^M_k x) = c^M_{m+k} x
\end{align*}
(however $c$ need not be of homogeneous conformal weight).
\end{lemma}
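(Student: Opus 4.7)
The plan is to apply the Borcherds identity (\ref{modconbor}) in two different ways. First, with $n = 0$, to trade $a^M_m b^M_k x$ for $p(a,b)\, b^M_k a^M_m x$ plus a term already of the required form $c_0^M_{m+k} x$. Second, with $n = -N$ for each of a sufficient range of positive integers $N$, to set up a linear system whose solution expresses $b^M_k a^M_m x$ itself as $(c'')^M_{m+k} x$. By linearity we may assume throughout that $x \in M_d$ is homogeneous of some conformal weight $d \in \R_+$.

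Substituting $n = 0$ into (\ref{modconbor}) collapses its right hand side via $\binom{0}{j} = \delta_{j,0}$, producing
\[ a^M_m b^M_k x \;-\; p(a,b)\, b^M_k a^M_m x \;=\; c_0^M_{m+k} x, \qquad c_0 \;:=\; \sum_{j \geq 0} \binom{m + \D_a - 1}{j}\, a_{(j)} b \;\in V, \]
where $c_0$ is a finite sum because $a_{(j)}b = 0$ for $j \gg 0$. It therefore suffices to produce $c'' \in V$ with $b^M_k a^M_m x = (c'')^M_{m+k} x$.

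Fix an integer $N_0 > d - k$. For every integer $N \geq N_0$ and every $j \geq 0$ we have $k + j + N > d$, so $b^M_{k+j+N} x = 0$ by positive energy, and the ``$a$-first'' half of (\ref{modconbor}) with $n = -N$ vanishes on $x$. Using $(-1)^j \binom{-N}{j} = \binom{N+j-1}{j}$, the surviving identity rearranges to
\[ \sum_{j \geq 0} \binom{N + j - 1}{j}\, b^M_{k-j}\, a^M_{m+j}\, x \;=\; \bigl(c^{(N)}\bigr)^M_{m+k}\, x, \qquad c^{(N)} \;:=\; -p(a,b)\,(-1)^N \sum_{l \geq -N} \binom{m + \D_a - 1}{l + N}\, a_{(l)} b, \]
and $c^{(N)} \in V$ since $a_{(l)}b = 0$ for $l \gg 0$. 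Moreover, by positive energy only finitely many terms on the left are nonzero: $a^M_{m+j}x = 0$ for $j$ exceeding some integer $J$ (e.g. $J = \lfloor d - m \rfloor$).

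Letting $N$ range over the $J + 1$ integers $N_0, N_0 + 1, \dots, N_0 + J$ yields a square linear system in the unknowns $b^M_{k-j}a^M_{m+j}x$ for $j = 0, \dots, J$. Its coefficient matrix has entries $\binom{N + j - 1}{j}$, which as a function of $N$ is a polynomial of degree exactly $j$; since these polynomials have distinct degrees they span the space of polynomials in $N$ of degree $\leq J$, so their evaluations at any $J + 1$ distinct values of $N$ yield an invertible matrix. Solving for the $j = 0$ component realises $b^M_k a^M_m x$ as an explicit linear combination of the $(c^{(N)})^M_{m+k}x$, hence as $(c'')^M_{m+k} x$ for some $c'' \in V$. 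Setting $c = p(a,b)\, c'' + c_0$ then completes the proof. The only delicate point is the invertibility of the coefficient matrix, which is a standard polynomial interpolation fact and is the main reason we had to range over many values of $N$ rather than just one.
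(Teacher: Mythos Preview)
Your proof is correct, but it takes a different route from the paper's. The paper argues by downward induction on $k$: it fixes the Borcherds-$m$ variable at the threshold $\overline{m}$ (where $a^M_{\overline{m}+j}x=0$ for $j\geq 0$), so the $b\,a$ half of (\ref{modconbor}) vanishes identically; then, for each successive smaller $k$, the $a\,b$ half is a short sum whose new term is the desired $a^M_m b^M_k x$ and whose remaining terms were handled at earlier steps. Your argument instead first uses $n=0$ to trade $a^M_m b^M_k x$ for $b^M_k a^M_m x$ plus a known $c_0$, and then kills the \emph{other} half of Borcherds by taking $n=-N$ very negative; varying $N$ over $J+1$ values produces a square linear system whose coefficient matrix $\binom{N+j-1}{j}$ you invert via a polynomial-interpolation argument. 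Both proofs exploit the same mechanism (choose parameters in Borcherds so that one of the two infinite sums truncates to nothing), but the paper peels off one unknown at a time by induction while you solve for all of them at once. The inductive version is slightly more elementary and yields a recursive recipe for $c$; your version is a clean one-shot argument, at the cost of the invertibility check. A minor remark: you invoke ``positive energy'' to get $b^M_{k+j+N}x=0$ and $a^M_{m+j}x=0$ for large indices, but in fact only the quantum-field property is needed (as in the paper), so your argument actually works for any $V$-module.
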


\begin{proof}
Consider equation (\ref{modconbor}) with $a$, $b$, $x$, $m$, $k$ as above, and $n \in \Z$. Because $Y^M(a, z)$ and $Y^M(b, z)$ are quantum fields, there exist $\overline{m} \in [\eps_a]$ and $\overline{k} \in [\eps_b]$ such that $a^M_m x = b^M_k x = 0$ for $m \geq \overline{m}$, $k \geq \overline{k}$. The lemma obviously holds when $k \geq \overline{k}$.

Substitute $m = \overline{m}$ into (\ref{modconbor}) to get
\begin{align*}
\sum_{j \in \Z_+} \binom{\overline{m}+\D_a-1}{j} (a_{(n+j)}b)^M_{\overline{m}+k} x
= \sum_{j \in \Z_+} (-1)^j\binom{n}{j} a^M_{\overline{m}+n-j} b^M_{k+j-n} x,
\end{align*}
then put $k = \overline{k}+n-1$ to obtain
\begin{align*}
\sum_{j \in \Z_+} \binom{\overline{m}+\D_a-1}{j} (a_{(n+j)}b)^M_{\overline{m}+\overline{k}+n-1} x
= a^M_{\overline{m}+n} b^M_{\overline{k}-1} x.
\end{align*}
Hence the lemma is true for $k = \overline{k}-1$ too, we simply let
\[
c = \sum_{j \in \Z_+} \binom{\overline{m}+\D_a-1}{j} a_{(m-\overline{m}+j)}b.
\]

Now put $m = \overline{m}$ and $k = \overline{k}+n-2$ to obtain
\begin{align*}
\sum_{j \in \Z_+} \binom{\overline{m}+\D_a-1}{j} (a_{(n+j)}b)^M_{\overline{m}+\overline{k}+n-2} x
= a^M_{\overline{m}+n} b^M_{\overline{k}-2} x - n a^M_{\overline{m}+n-1} b^M_{\overline{k}-1} x.
\end{align*}
Since we can write $a^M_{\overline{m}+n-1} b^M_{\overline{k}-1} x$ as $c^M_{\overline{m}+\overline{k}+n-2} x$, we can now do the same for $a^M_{\overline{m}+n} b^M_{\overline{k}-2} x$.

The general case follows inductively, we write any term of the form $a^M_m b^M_k x$ as a linear combination of terms of the form $(c^i)^M_{r_i} x$. The graded structure of $M$ implies that for all such terms $r_i = m+k$, so the linear combination can be taken to be the single term $c = \sum_i c^i$.
\end{proof}

\begin{prop} \label{smallirred}
If $M \in \perep(V)$ is irreducible then $\Omega_p(M)$ is either $0$ or it is irreducible.
\end{prop}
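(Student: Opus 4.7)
The strategy is to assume $M_p \neq 0$, fix any nonzero $x \in M_p$, and prove that $\zhu_{p, \G}(V) \cdot x = M_p$; this will show that $M_p$ admits no proper nonzero $\zhu_{p, \G}(V)$-submodule and is therefore irreducible.

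Since $M$ is an irreducible $V$-module, the (graded) $V$-submodule generated by $x$ equals $M$, so every vector of $M$ is a linear combination of monomials in the operators $a^M_n$ (with $a \in V$ homogeneous and $n \in [\eps_a]$) applied to $x$. Applying Lemma \ref{lithesis} iteratively, and decomposing the intermediate element into homogeneous conformal weight components between successive applications, one rewrites each such monomial as $c^M_n x$ for some $c \in V$, where $n$ is the sum of the mode indices of the original operators. By the grading property $a^M_n(M_j) \subseteq M_{j-n}$, this vector lies in $M_{p - n}$.

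Given an arbitrary $y \in M_p$, expand $y$ as a linear combination of such terms and project onto the degree-$p$ summand; only contributions with $n = 0$ survive, yielding $y = \sum_i (c_i)^M_0 x$ for some $c_i \in V$. Only those homogeneous components of $c_i$ with $[\eps] = [0]$ contribute, since $(c_i)^M_0$ is nonzero only when $0 \in [\eps_{c_i}]$; hence we may take $c_i \in V_\G$. By Lemma \ref{Jannihilates} the operator $(c_i)^M_0$ on $M_p$ factors through the class $[c_i] \in \zhu_{p, \G}(V)$, giving $y = \sum_i [c_i] \cdot x \in \zhu_{p, \G}(V) \cdot x$.

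The main technical point is the iteration of Lemma \ref{lithesis}, whose output $c$ need not be of homogeneous conformal weight; this is handled by decomposing $c$ into its conformal weight eigenspaces between applications. The additivity of the cosets $[\eps]$ under $n$-th products then ensures the final mode $n$ is an admissible index for the resulting $c$, so the bookkeeping stays consistent throughout.
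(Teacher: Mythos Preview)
Your proof is correct and follows essentially the same approach as the paper's: both rely on Lemma~\ref{lithesis} to reduce the $V$-submodule generated by a subset of $M_p$ to spans of single-mode operators $c^M_n x$, then observe that the degree-$p$ component is exactly $\zhu_{p,\G}(V)$ acting on that subset. The paper phrases this as a contrapositive (a proper nonzero $N' \subseteq M_p$ would generate a proper nonzero $V$-submodule $M'$ with $M'_p = N'$), whereas you phrase it directly (any nonzero $x \in M_p$ generates $M_p$ under $\zhu_{p,\G}(V)$); your version is slightly more explicit about the iteration of the lemma and the handling of non-homogeneous $c$, which the paper leaves implicit.
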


\begin{proof}
Suppose to the contrary that $N = \Omega_p(M)$ has a proper $\zhu_{p, \G}(V)$-submodule $N'$. From Lemma \ref{lithesis} we see that the $V$-submodule $M'$ of $M$ generated by $N'$ is the span of elements $a^M_n x$ where $a \in V$, $n \in [\eps_a]$, and $x \in N$. But then
\begin{align*}
M'_p = (V_\G)_0 N' = \zhu_{p, \G}(V) N' = N' \subsetneq N = M_p,
\end{align*}
which contradicts the irreducibility of $M$.
\end{proof}

\begin{rmrk} \label{QleqP}
The remarks that led to Lemma \ref{Jannihilates} also imply that $J_{p, \G} M_q = 0$ for all $q$ such that $0 \leq q \leq p$. Thus, for all such $q$, $M_q$ is a $\zhu_{p, \G}(V)$-module.

An equivalent proof of this fact comes from the surjective homomorphism $\phi_{j+1}$, which allows us to regard $M_j$ as a $\zhu_{j+1, \G}(V)$-module. By composing these homomorphisms we may regard $M_j$ as a $\zhu_{p, \G}(V)$-module, for each $p \geq j$. This argument only works for $j \in \Z_+$, but in appendix \ref{pnotinZ} we define Zhu algebras for non-integer $p$; with these in hand we can deduce that $M_q$ is a $\zhu_{p, \G}(V)$-module whenever $q \leq p$.
\end{rmrk}

\begin{rmrk} \label{inequivrem}
Let $M$ be irreducible and $0 \leq q \leq p$. The $\zhu_{p, \G}(V)$-module $M_q$ is either $0$ or irreducible for the same reason as in Proposition \ref{smallirred}. Recall that if $\omega \in V$ is a Virasoro element and $L(z) = Y(\omega, z)$, then $H = L_0$ is an energy operator on $V$. From equation (\ref{Hcomm}) we have $L^M_0 : M_0 \rightarrow M_0$ commuting with $a_0$ for all $a \in V_\G$. We assume $V$, and therefore $M_0$, has countable dimension. Then, by the Dixmier-Schur lemma, $L^M_0|_{M_0} = h I_{M_0}$ for some constant $h \in \C$. Let $M_j \neq 0$. Because of Lemma \ref{lithesis}, and the fact that $M$ is irreducible, there exists $a \in V$ such that $a^M_{-j}|_{M_0} : M_0 \rightarrow M_j$ is nonzero. Using (\ref{Hcomm}) again shows that $L^M_0|_{M_j} = (h+j) I_{M_j}$. The nonzero $\zhu_{p, \G}(V)$-modules $M_q$, for $0 \leq q \leq p$, are pairwise non-isomorphic, because $L^M_0$ acts diagonally on each with distinct eigenvalues.
\end{rmrk}


\subsection{\texorpdfstring{The Induction Functors $M^p$ and $L^p$}{Two Induction Functors}} \label{inducfunct}

Let us fix a $\zhu_{p, \G}(V)$-module $N$. In this section we construct (in a functorial manner) a module $M \in \perep(V)$ such that $M_p = N$.

\begin{lemma} \label{liezhu}
The linear map $\varphi_p : (\lie{V})_0 \rightarrow \zhu_{p, \G}(V)$ defined by $a_0 \mapsto [a]$ is well defined and is a surjective homomorphism of Lie superalgebras, where $(\lie{V})_0$ has the Lie bracket (\ref{conbrak}), and $\zhu_{p, \G}(V)$ has the commutator bracket $[a, b] = a *_p b - p(a, b) b *_p a$.

Furthermore, the following diagram (of Lie superalgebra homomorphisms) commutes:
\begin{align} \label{liecomm}
\begin{split}
\xymatrix{
(\lie V)_0 \ar@{->>}[d]_{\varphi_p} \ar@{->>}[dr]^{\varphi_{p-1}} \\
\zhu_{p, \G}(V) \ar@{->>}[r]_{\phi_p} & \zhu_{p-1, \G}(V). \\
}
\end{split}
\end{align}
\end{lemma}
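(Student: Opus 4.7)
The statement has three components: well-definedness of $\varphi_p$, that it is a surjective Lie superalgebra homomorphism, and commutativity of the triangle (\ref{liecomm}). I would handle these in order, with the skew-symmetry formula (\ref{skewsymm}) carrying most of the weight.

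For well-definedness, I would recall from Definition \ref{lieVdef} that $\lie V$ is the quotient of $Q$ by relations of the form $(Ta)_n + (n + \D_a) a_n = 0$. At weight $n = 0$ these become $(Ta)_0 + \D_a a_0 = 0$ for $a \in V_\G$ of homogeneous conformal weight. Thus $\varphi_p$ descends to $(\lie V)_0$ provided $[Ta] + \D_a[a] = 0$ in $\zhu_{p, \G}(V)$, which is exactly the content of $(T + H)a \in J_{p, \G}$ from (\ref{Jdef}) specialized to $\hbar = 1$; the inhomogeneous case follows by linearity. Surjectivity is then immediate, since $\zhu_{p, \G}(V)$ is spanned by the classes $[a]$ with $a \in V_\G$ and each lies in the image of the corresponding $a_0$.

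The main computation is the bracket identity. Specializing (\ref{conbrak}) to $m = k = 0$ gives
\begin{align*}
[a_0, b_0] = \sum_{j \in \Z_+} \binom{\D_a - 1}{j}(a_{(j)}b)_0,
\end{align*}
so $\varphi_p([a_0, b_0]) = \sum_{j \in \Z_+}\binom{\D_a - 1}{j}[a_{(j)}b]$. On the other hand, the commutator $[a] *_p [b] - p(a,b)[b] *_p [a]$ in $\zhu_{p, \G}(V)$ equals, by the skew-symmetry formula (\ref{skewsymm}) at $\hbar = 1$, the class of $[a, b]_{\hbar = 1}$, and using $\ga_a = \D_a$ for $a \in V_\G$ the definition (\ref{hbarbrak}) yields the same expression $\sum_{j \in \Z_+}\binom{\D_a - 1}{j}[a_{(j)}b]$. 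This matching is not really an obstacle because (\ref{skewsymm}) has absorbed all the genuine cancellations; it is essentially a matter of reading off coefficients on each side.

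Finally, the triangle (\ref{liecomm}) commutes because all three arrows are induced by the identity map on $V$: concretely $\phi_p(\varphi_p(a_0)) = \phi_p([a]_{\zhu_p}) = [a]_{\zhu_{p-1}} = \varphi_{p-1}(a_0)$, directly from (\ref{factorcomm}).
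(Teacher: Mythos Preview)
Your proposal is correct and follows essentially the same route as the paper: well-definedness via the relation $(T+H)a \in J_{p,\G}$, the bracket identity via the skew-symmetry formula (\ref{skewsymm}) together with (\ref{conbrak}) at $m=k=0$ and (\ref{hbarbrak}) at $\hbar=1$, and commutativity of the triangle because all maps are induced by the identity on $V_\G$. The only cosmetic difference is that the paper phrases well-definedness as factoring the canonical surjection $V_\G \twoheadrightarrow \zhu_{p,\G}(V)$ through $(\lie V)_0$, whereas you check directly that the defining relations of $(\lie V)_0$ map to zero; these are equivalent.
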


\begin{proof}
Recall definition (\ref{lieVdef}) of the Lie superalgebra $\lie V$ as the quotient $Q / (T + \partial_t) Q$ where $Q$ is spanned by elements of the form $at^m$, $m \in [\ga_a]$. Let $f : V_\G \rightarrow (\lie V)_0$ be defined by $a \mapsto a_0$. The degree $0$ piece of $(T + \partial_t) Q$ is spanned by elements of the form $(Ta)_0 + \D_a a_0$ where $a \in V_\G$, hence it is contained in $f(J_{p, \G})$. Therefore the canonical surjection $V_\G \twoheadrightarrow \zhu_{p, \G}(V)$ factors to give a linear map $\varphi_p: (\lie{V})_0 \twoheadrightarrow \zhu_{p, \G}(V)$.  

Let $a, b \in V_\G$. Equation (\ref{conbrak}) with $m = n = 0$ is
\begin{align*}
[a_0, b_0] = \sum_{j \in \Z_+} \binom{\D_a-1}{j} (a_{(j)}b)_0.
\end{align*}
Equation (\ref{hbarbrak}) with $\hbar=1$ is
\begin{align*}
[a, b]_{1} = \sum_{j \in \Z_+} \binom{\D_a - 1}{j} a_{(j)}b.
\end{align*}
Equation (\ref{skewsymm}) with $\hbar=1$ is
\begin{align*}
a *_p b - p(a, b) b *_p a \equiv [a, b]_{1} \pmod{J_{p, \G}}.
\end{align*}
These combine to imply that $\varphi_p([a_0, b_0]) = a *_p b - p(a, b) b *_p a$, i.e., that $\varphi_p$ is a Lie superalgebra homomorphism.

Diagram (\ref{liecomm}) is commutative because the maps in question are all induced from the identity map on $V_\G$.
\end{proof}

We now write $\g = \lie V$. Because of Lemma \ref{liezhu} our $\zhu_{p, \G}(V)$-module $N$ is naturally a $\g_0$-module. Consider the graded Lie subalgebra $\g_+ = \g_0 + \g_{>p}$ where $\g_{>p} = \oplus_{j>p} \g_j$. We extend the representation of $\g_0$ on $N$ to a representation of $\g_+$ by letting $\g_{>p}$ act by $0$. Then we induce from $\g_+$ to $\g$ to get the $\g$-module
\[
\tilde{M} = \ind_{\g_+}^\g N = U(\g) \otimes_{U(\g_+)} N.
\]
We make $\tilde{M}$ into a graded $\g$-module by declaring that $\deg N = p$ and that $a_n$ lowers degree by $n$.

We write $a^M_n$ for the image of $a_n$ in $\en \tilde{M}$, and we put
\begin{align*}
Y^M(a, z) = \sum_{n \in [\eps_a]} a^M_{n}z^{-n-\Delta_a} \in (\en{M})[[z, z^{-1}]]z^{-\ga_a}.
\end{align*}
We claim $Y^M(a, z)$ is a quantum field, the proof is by induction. By construction $a^M_n x = 0$ whenever $n > p$ and $x \in N$, this is the base case. Fix $s \in \Z_+$ and suppose that the claim holds for all length $s$ monomials
\begin{align*}
\mathbf{m} = (b^1)^M_{n_1} (b^2)^M_{n_2} \cdots (b^s)^M_{n_s} x,
\end{align*}
i.e., for all $c \in V$ we have $c^M_n \mathbf{m} = 0$ for $n \gg 0$. Let $a, b \in V$ and $k \in [\eps_b]$ be fixed now. By equation (\ref{conbrak}) we have
\begin{align*}
a^M_m b^M_k \mathbf{m} = b^M_k a^M_m \mathbf{m} + \sum_{j \in \Z_+} \binom{m+\D_a-1}{j} (a_{(j)}b)^M_{m+k} \mathbf{m},
\end{align*}
where the sum is finite because $Y(a, z)$ is a quantum field. By the inductive assumption, each of these finitely many terms vanishes for $m \gg 0$. Hence the claim holds for length $s+1$ monomials.

Let $F_r = \oplus_{j \geq r} \g_j$ for $r \in \overline{\G} \cap \R_+$. We declare the subsets $\{U(\g)F_r\}_{r \geq 0}$ to be a fundamental system of neighborhoods of $0$. The multiplication in $U(\g)$ is continuous with respect to this topology, so $U(\g)$ becomes a topological algebra. Let
\[
\hat{U} = \varprojlim (U(\g) / U(\g)F_r)
\]
be the completion of $U(\g)$ with respect to this topology. Since each $U(\g) F_N$ is a left ideal, $\hat{U}$ is naturally a left $U(\g)$-module. We claim that $\hat{U}$ is also an algebra. We may identify $\hat{U}$ with the space of infinite sums of monomial elements of $U(\g)$ in which only finitely many terms lie outside $U(\g)F_N$ for each $N \geq 0$. Let $x = \sum_{i \in \Z_+} x^{(i)}$ and $y = \sum_{j \in \Z_+} y^{(j)}$ be two such sums. If $y^{(j)} \in U(\g) F_N$ then clearly $x^{(i)} y^{(j)} \in U(\g) F_N$. Using the commutation relations one may check the following: for each of the other finitely many $y^{(j)}$, there exists $N^{(j)}$ such that if $x^{(i)} \in U(\g) F_{N^{(j)}}$ then $x^{(i)} y^{(j)} \in U(\g) F_N$. So only finitely many $x^{(i)} y^{(j)}$ lie outside $U(\g) F_N$. The product of $x$ and $y$ is defined term-by term, and is a well-defined element of $\hat{U}$.

For $a, b \in V$, $m \in [\eps_a]$, $k \in [\eps_b]$, and $n \in \Z$, let
\begin{align}
\begin{split}
BI(a, b; m, k; n) &= \sum_{j \in \Z_+} \binom{m+\D_a-1}{j} (a_{(n+j)}b)_{m+k} \\
& \phantom{\lim} - \sum_{j \in \Z_+} (-1)^j \binom{n}{j} \left[ a_{m+n-j}b_{k+j-n} - (-1)^n b_{k-j} a_{m+j} \right],
\end{split}
\end{align}
which we may think of as an element of $\hat{U}$ because, for each $r$, all but finitely many terms lie in $U(\g)F_r$. Let $\B \subseteq \hat{U}$ be the span of the terms $\vac_n - \delta_{n, 0} 1$ for $n \in \Z$ and $BI(a, b; m, k; n)$ as $a$, $b$, $m$, and $k$ range over all their possible values, and let $S = \B \tilde{M} \subseteq \tilde{M}$. Because $Y(a, z)$, $Y^M(a, z)$, and $Y^M(b, z)$ are quantum fields, elements of $\B \tilde{M}$ are finite sums, hence $S$ is well-defined\footnote{We could, therefore, have avoided the introduction of the completion $\hat{U}$ here. However, $\hat{U}$ will appear in a later section.}.

In Lemma 2.26 of \cite{DK} it is proved by direct calculation that $[c_s, \B] \subset \B$ for all $c \in V$, $s \in [\eps_c]$. This implies that $S$ is an $\g$-submodule of $\tilde{M}$, so let $M = \tilde{M} / S$. On the quotient $M$, $Y^M(a, z)$ is a quantum field, and the vacuum and Borcherds identities are satisfied. Therefore $M$ is a $V$-module.

Lemma \ref{lithesis}, and that $\g_{>p}$ annihilates $N$, imply that $M$ has no pieces of negative degree. Therefore $M$ is actually a $(\G, H)$-twisted positive energy $V$-module. We claim now that $M_p = N$.

Let $x \in N$ and $y = (a^1)^M_{n_1} \cdots (a^s)^M_{n_s} x \in M_p$ (so $\sum n_i = 0$). By Lemma \ref{lithesis}, $y = s + x'$ where $s \in S_p$ and $x' = b^M_0 x \in N$ for some $b \in V_\G$. Therefore $\tilde{M}_p = S_p + N$. Since we want to show that this sum is direct, it suffices to show that $S_p \cap N = 0$. Our strategy for proving this is a hybrid of the strategies of \cite{DLM} and \cite{DK}.

Following \cite{DLM} we introduce a bilinear form $\left< \cdot, \cdot \right> : N^* \times \tilde{M} \rightarrow \C$.
\begin{defn} \label{pairingdef}
Let $\psi \in N^*$. If $x \in N$, set $\left< \psi, x \right> = \psi(x) \in \C$, i.e., $\left< \cdot, \cdot \right>$ restricts to the canonical form on $N^* \times N$. For each $n \in \overline{\G}$ such that $n \neq 0$ and $n \leq p$, fix an ordered basis $B_n$ of $(\lie V)_n$; their union $B$ is a basis of a subspace of $\g$ complementary to $\g_+$. Let $s \geq 2$ be an integer, and let $a^1_{n_1}, \ldots a^s_{n_s} \in B$ be ordered lexicographically, i.e., $n_1 \leq n_2 \leq \ldots \leq n_s$ and if consecutive $n_i$ are equal then the corresponding $a^i$ are in increasing order in $B_{n_i}$. Suppose also that $\sum_i n_i = 0$. Choose $c \in V$ such that $c_{n_1+n_2} = a^1_{n_1} a^2_{n_2}$ (Lemma \ref{lithesis} states that such an element exists, it may not be unique though) and define
\begin{align} \label{complicated}
\left< \psi, (a^1)^M_{n_1} \cdots (a^s)^M_{n_s} x \right>
= \left< \psi, c^M_{n_1+n_2} (a^3)^M_{n_3} \cdots (a^s)^M_{n_s} x \right>.
\end{align}
If $s = 2$ then $n_1+n_2 = 0$ and $c^M_0 x \in N$. If $s \geq 3$ then equation (\ref{complicated}) defines $\left<\cdot, \cdot\right>$ inductively. If $q \neq p$ and $y \in \tilde{M}_q$ then set $\left<\psi, y\right> = 0$. Finally we extend the definition to all elements of $M$ linearly.
\end{defn}

The bilinear form $\left<\cdot, \cdot\right>$ is well-defined because of the PBW theorem, i.e., we defined it on a basis of $\tilde{M}$. In \cite{DLM}, choices of elements $c$ were given by an explicit formula. We omit such a formula, since we do not require it.

For any $a, b \in V$ with $[\eps_a] + [\eps_b] = [0]$, and for all $\psi \in N^*$, $x \in N$ we will show that
\begin{align} \label{particular}
\left< \psi, BI(a, b; p+1+\eps_a, -(p+1+\eps_a); -1) x \right> = 0.
\end{align}
The proof of this fact is fairly involved, so we relegate it to appendix \ref{appcalc}; it uses the fact that $N$ is a $\zhu_{p, \G}(V)$-module, and it uses our particular choice of $\left< \cdot, \cdot \right>$.

In Lemma 2.27 of \cite{DK} it was proved that $\B$ is spanned by all elements of the form $BI(a, b; m_a, k; -1)$ where $a$ and $b$ range over $V$, $k$ ranges over $[\eps_b]$, and $m_a$ is a fixed number in $[\eps_a]$ (which can be chosen as we please for each $a$). From equation (\ref{particular}) and this lemma it follows that $\left<\psi, \B N \right> = 0$.

Recall the notion of a local pair. Let $U$ be a vector superspace and let $a(w)$, $b(w)$ be $\en U$-valued quantum fields. We say the pair $(a(w), b(w))$ is \emph{local} if there exists $n \in \Z_+$ such that
\[
(z-w)^n [a(z), b(w)] = 0.
\]
If $a(w) = \sum_{n \in [\ga_a]} a_{(n)} w^{-n-1}$ and $b(w) = \sum_{n \in [\ga_b]} b_{(n)} w^{-n-1}$ then locality of the pair $(a(w), b(w))$ is equivalent to the existence of a finite collection of quantum fields $c^j(w)$, $j = 0, 1, \ldots N$, such that
\[
[a_{(m)}, b_{(k)}] = \sum_{j \in \Z_+} \binom{m}{j} c^j_{(m+k-j)}
\]
(see \cite{Kac} and \cite{DK}). In particular, the quantum fields $Y^M(a, w)$ are pairwise local because of equation (\ref{conbrak}).

\begin{lemma}[Dong's Lemma]
Let $a(w)$, $b(w)$ and $c(w)$ be pairwise local quantum fields. Then $a(w)$ and the $\G$-twisted $n^\text{th}$ product\footnote{See Remark 2.20 of \cite{DK} for an explanation of $\G$-twisted $n^\text{th}$ products. In particular, of the difference between this definition and equation (\ref{nthproddef}).}
\[
b(w)_{(n, \G)}c(w) = \res_z (zw^{-1})^{m+\D_b-1} \left[ b(z) c(w) i_{z, w}
- p(a, b) c(w) b(z) i_{w, z} \right](z-w)^n
\]
also form a local pair.
\end{lemma}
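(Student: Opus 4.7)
The plan is to follow the standard proof of Dong's lemma, adapted to the twisted setting. Since the three fields are pairwise local, one can choose a single integer $N \in \Z_+$ large enough that $(z-w)^N[a(z),b(w)] = (z-w)^N[a(z),c(w)] = (z-w)^N[b(z),c(w)] = 0$. The goal is to produce an integer $M$ (depending on $n$, $N$, and on $\D_b$) such that $(z_1-w)^M[a(z_1), b(w)_{(n,\G)}c(w)] = 0$.

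First I would substitute the definition of the twisted $n^{\text{th}}$ product into $[a(z_1),\, b(w)_{(n,\G)}c(w)]$ and bring the outer commutator inside the residue. Since $a(z_1)$ commutes with the scalar factor $(zw^{-1})^{m+\D_b-1}$ and with the expansion symbols $i_{z,w}, i_{w,z}$, the commutator splits into the familiar four terms
\[
[a(z_1),b(z)]c(w),\quad b(z)[a(z_1),c(w)],\quad [a(z_1),c(w)]b(z),\quad c(w)[a(z_1),b(z)],
\]
each weighted by $(z-w)^n$, the twist factor, and the appropriate expansion.

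Next I would invoke the classical polynomial identity
\[
(z_1 - w)^{2N} \;=\; \sum_{j=0}^{2N} \binom{2N}{j} (z_1 - z)^{2N-j} (z - w)^j.
\]
Multiplying the commutator above by $(z_1-w)^{2N}$ and distributing, for each $j$ either $(z_1-z)^{2N-j}$ has exponent $\geq N$ and kills the brackets involving $b(z)$, or $(z-w)^j$ has exponent $\geq N$ and, combined with the $(z-w)^n$ already present (plus possibly raising $n$ if negative), kills the brackets involving $c(w)$ after the residue is taken. The remaining terms after this reduction are of the form $(z_1-w)^{*}\,b(z)\cdot [a(z_1),c(w)]$ or the analogous $c$-first arrangement, which one kills by one further factor of $(z_1-w)^N$, using that $(z_1-w)^N[a(z_1),c(w)]=0$ and that $b(z)$ commutes with this scalar in $z_1,w$. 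Thus $M = 3N - n$ (or $M = 3N$ if $n \geq 0$) suffices.

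The main obstacle is bookkeeping around the twist factor $(zw^{-1})^{m+\D_b-1}$ and the expansions $i_{z,w}, i_{w,z}$: one must check that multiplying by $(z_1-w)^{2N}$ and then invoking the binomial identity above is a legal manipulation of formal distributions on the appropriate regions. This is exactly the issue treated in Remark 2.20 and the surrounding discussion of \cite{DK}; once that manipulation is justified, the argument reduces to the classical Dong lemma in \cite{Kac}. For this reason I would simply cite the proof of Dong's lemma in \cite{Kac}, noting that the twist factor $(zw^{-1})^{m+\D_b-1}$ is a $z_1$-free scalar and does not interfere with any of the steps.
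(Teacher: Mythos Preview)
Your proposal is correct and matches the paper's approach: the paper itself gives no argument beyond ``See \cite{Kac} for a proof,'' and your sketch is precisely the standard Dong's lemma argument from \cite{Kac}, with the observation that the twist factor $(zw^{-1})^{m+\D_b-1}$ is independent of $z_1$ and therefore passes harmlessly through all the commutator manipulations.
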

See \cite{Kac} for a proof.

Let\footnote{We are abusing notation here by using the same symbol $BI$ to represent the expression with all instances of $a_n$ replaced by $a^M_n$.}
\begin{align*}
BI(a, b; m; n; w)
= {} & \sum_{k \in \eps_b} w^{-m-\D_a-k-\D_b+n-1} BI(a, b; m, k; n) \\
= {} & \sum_{j \in \Z_+} \binom{m+\D_a-1}{j} Y^M(a_{(n+j)}b, w) w^{-j}
- Y^M(a, z)_{(n, \G)}Y^M(b, w).
\end{align*}
Dong's lemma, together with the finiteness of the sum in the second line, implies that $BI(a, b; m; n; w)$ is a quantum field which forms a local pair with any quantum field $Y^M(c, w)$.

The following is Lemma 2.23 of \cite{DK}.
\begin{lemma}[Uniqueness Lemma] \label{uniqueness}
Let $U$ be a vector superspace, $\mathcal{F} = \{a^i(w) | i \in I\}$ a collection of $\en U$-valued quantum fields that are pairwise local, and $b(w)$ an $\en U$-valued quantum field which forms a local pair with each element of $\mathcal{F}$. Suppose $W$ is a \emph{generating subspace} of $U$, meaning that $U$ is spanned by vectors of the form $a^{i_1}_{(n_1)} \cdots a^{i_s}_{(n_s)} x$ where $x \in W$. If $b(w) W = 0$ then $b(w) U = 0$.
\end{lemma}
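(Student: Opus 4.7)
The plan is to prove the lemma by induction on the length $s$ of the spanning monomials $a^{i_1}_{(n_1)} \cdots a^{i_s}_{(n_s)} x$ with $x \in W$. Since such monomials span $U$ by hypothesis, it suffices to show that $b(w)$ annihilates each one. The base case $s = 0$ is exactly the assumption $b(w) W = 0$.

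For the inductive step, I would fix a monomial of length $s+1$ of the form $a^{i_1}_{(n_1)} u$ with $u$ a monomial of length $s$, and write $a(z) = a^{i_1}(z)$. The inductive hypothesis says $b(w) u = 0$. Locality of the pair $(a(z), b(w))$ supplies $N \in \Z_+$ with $(z-w)^N [a(z), b(w)] = 0$. Applying the commutator to $u$ and using $b(w)u = 0$ on the $a(z)b(w)u$ side yields
\[
(z-w)^N b(w) a(z) u = 0
\]
(up to a sign from the parity swap). Once I can cancel the factor $(z-w)^N$ and extract the coefficient of $z^{-n_1 - 1}$, I obtain $b(w) a^{i_1}_{(n_1)} u = 0$ for every $n_1$, closing the induction.

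The one delicate step, and the main obstacle, is this cancellation of $(z-w)^N$. In the full space $\en(U)[[z^{\pm 1}, w^{\pm 1}]]$ multiplication by $(z-w)^N$ is not injective -- for instance $(z-w)\delta(z,w) = 0$ -- but because $a(z)$ is a (twisted) quantum field, $a(z)u$ has only finitely many negative powers of $z$, and this property is inherited by $b(w) a(z) u$. On the subspace of series whose $z$-expansion is bounded below, multiplication by $(z-w)^N$ is injective, so the cancellation is valid and the induction closes. I note in passing that the pairwise locality of the family $\mathcal{F}$ plays no role; only the locality of each individual pair $\bigl(a^i(z), b(w)\bigr)$ is used.
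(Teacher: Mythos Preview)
Your argument is correct. The paper does not actually supply a proof of this lemma; it merely records it as Lemma~2.23 of \cite{DK} and moves on. Your induction on the length of the monomials, followed by the locality-and-cancellation step, is the standard proof (essentially the one found in \cite{Kac} and \cite{DK}), and your justification of the cancellation of $(z-w)^N$ is the right one: since $a(z)u \in U((z))$ (or $z^{-\gamma_a}U((z))$ in the twisted setting), the series $b(w)a(z)u$ has powers of $z$ bounded below, and on that subspace multiplication by $(z-w)^N$ is injective. Your closing observation is also accurate: the pairwise locality of the family $\mathcal{F}$ is not used in the proof of the lemma itself---only the locality of $b(w)$ with each $a^i(z)$ matters---though in the paper's application the pairwise locality is needed elsewhere (via Dong's lemma) to guarantee that the Borcherds-defect field $BI(a,b;m;n;w)$ is local with every $Y^M(c,w)$.
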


We apply the lemma with $U = \tilde{M}$, $W = N$, $\mathcal{F} = \{Y^M(a, w) | a \in V\}$, and $b(w) = BI(a, b; m; n; w)$, also let $\psi \in N^*$. We saw above that $\left<\psi, \B N\right> = 0$, Lemma \ref{uniqueness} implies that
\begin{align} \label{formiden}
\left< \psi, \B \tilde{M} \right> = 0
\end{align}
(the presence of $\psi$ does not affect the proof of the lemma).

Since $\left< \cdot, \cdot \right>$ restricts to the canonical pairing of $N^*$ with $N$, and (\ref{formiden}) holds for all $\psi \in N^*$, we obtain $N \cap S_p = 0$.

\begin{defn}[Induction functor $M^p$]
If $N \in \rep(\zhu_{p, \G}(V))$, we define $M^p(N) = M \in \perep(V)$ as constructed above. If $g : N \rightarrow N'$ is a homomorphism of $\zhu_{p, \G}(V)$-modules then we define $M^p(g) : M^p(N) \rightarrow M^p(N')$ as follows: $g$ is also a homomorphism of $\g_+$-modules. Let $\tilde{M} = \ind_{\g_+}^\g N$ and $\tilde{M}' = \ind_{\g_+}^\g N'$, $\ind_{\g_+}^\g$ is a Lie algebra induction functor, so $g$ lifts to a unique homomorphism $\tilde{g} : \tilde{M} \rightarrow \tilde{M}'$ of $\g$-modules such that $\tilde{g}|_N : N \rightarrow N'$ coincides with $g$. Now
\begin{align*}
\tilde{g}(S) = \tilde{g}(\B \tilde{M}) = \B \tilde{g}(\tilde{M}) \subseteq \B \tilde{M}' = S'
\end{align*}
so there is an induced map $g : M \rightarrow M'$ and we put $M^p(g) = g$. $M^p$ is a functor from $\rep(\zhu_{p, \G}(V))$ to $\perep(V)$.
\end{defn}

Let $I \subset M$ be the maximal graded $\g$-submodule of $M$ whose intersection with $N$ is $0$ and let $L = \tilde{M} / I$. Because $S$ is a graded submodule of $\tilde{M}$ meeting $N$ trivially we know $I$ exists and contains $S$, which in turn implies that $L$ is a $V$-module. The quotient map $\tilde{M}/S \twoheadrightarrow \tilde{M}/I$ is a homomorphism of $V$-modules. Consequently $L$ is a $(\G, H)$-twisted positive energy $V$-module with $L_p = N$, and such that every graded $V$-submodule of $L$ intersects $L_p$ non-trivially.

\begin{defn}
For any $N \in \rep(\zhu_{p, \G}(V))$, let $L^p(N) = L \in \perep(V)$. A homomorphism $g : N \rightarrow N'$ extends to a homomorphism $\tilde{g} : \tilde{M} \rightarrow \tilde{M}'$. Clearly $\tilde{g}(I) \subseteq I'$, we let $L^p(g)$ be the induced map $g : L^p(N) \rightarrow L^p(N')$. $L^p$ is a functor from $\rep(\zhu_{p, \G}(V))$ to $\perep(V)$.
\end{defn}


\subsection{\texorpdfstring{Some properties of the functors $\Omega_p$, $M^p$, and $L^p$}{Properties of these Functors}} \label{funcprop}

\begin{defn}
Let $M \in \perep(V)$. We say $M$ is \emph{normalized} if $M_0 \neq 0$. We say $M$ is \emph{$p$-irreducible}\footnote{In \cite{DK} $0$-irreducible $V$-modules are called \emph{almost irreducible}.} if
\begin{itemize}
\item $M$ is generated over $V$ by $M_p$, i.e., $V M_p = M$ in light of Lemma \ref{lithesis}.

\item Every nonzero graded $V$-submodule of $M$ has nonzero intersection with $M_p$.
\end{itemize}
\end{defn}
If $N \in \rep(\zhu_{p, \G}(V))$ then $L^p(N)$ is $p$-irreducible. If $M \in \perep(V)$ is irreducible then it is $p$-irreducible for each $p$ such that $M_p \neq 0$.

\begin{defn}
Let $\Phi_p : \rep(\zhu_{p-1, \G}(V)) \rightarrow \rep(\zhu_{p, \G}(V))$ be the functor sending $N \in \rep(\zhu_{p-1, \G}(V))$ to the same vector space $N$ with $\zhu_{p, \G}(V)$ acting by $a x = \phi_p(a) x$. We say $N \in \rep(\zhu_{p, \G}(V))$ is \emph{$p$-founded} if it is \emph{not} in the image of $\Phi_p$, i.e., the action of $\zhu_{p, \G}(V)$ on $N$ does not factor to an action of $\zhu_{p-1, \G}(V)$.
\end{defn}

\begin{thm} \label{fullequiv}{\ }
\begin{itemize}
\item For all $N \in \rep(\zhu_{p, \G}(V))$,
\begin{align*}
[\Omega_p \circ M^p](N) \cong [\Omega_p \circ L^p](N) \cong N.
\end{align*}
Indeed each of these compositions of functors is equivalent to the identity functor on $\rep(\zhu_{p, \G}(V))$.

\item Let $M \in \perep(V)$ be $p$-irreducible. Then $[L^p \circ \Omega_p](M) = M$.

\item $\Omega_p$ and $L^p$ restrict to inverse equivalences between the full subcategory of $\perep(V)$ of $p$-irreducible $V$-modules, and $\rep(\zhu_{p, \G}(V))$.

\item $\Omega_p$ and $L^p$ further restrict to inverse equivalences between the full subcategory of $\perep(V)$ of normalized $p$-irreducible $V$-modules, and the full subcategory of $\rep(\zhu_{p, \G}(V))$ of $p$-founded $\zhu_{p, \G}(V)$-modules.
\end{itemize}
\end{thm}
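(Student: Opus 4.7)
The plan is to prove the four parts in order: parts 1 and 3 fall out essentially formally from the construction of $M^p$ and $L^p$, part 2 is a direct submodule comparison, and part 4 reduces, via part 2, to the single equivalence ``$L^p(N)$ is normalized iff $N$ is $p$-founded'', which I establish by a grading shift of size one.

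Part 1 is close to definitional: both $M^p(N)_p$ and $L^p(N)_p$ were identified with $N$ as $\zhu_{p,\G}(V)$-modules in section \ref{inducfunct} by the construction of the induction functors, and the identifications are natural in $N$ because $M^p(f)$ and $L^p(f)$ are defined by passing the canonical lift $\tilde f : \tilde M \to \tilde M'$ of $f$ to the quotients $\tilde M/S$ and $\tilde M/I$, and $\tilde f|_N = f$ by construction.

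For part 2, fix a $p$-irreducible $M \in \perep(V)$ and set $N = \Omega_p(M) = M_p$. The $\g_+$-action on $N$ extends to a $\g$-action on $\tilde M = \ind_{\g_+}^{\g} N$, and the first condition of $p$-irreducibility ($M = VM_p$) yields a canonical surjection of $\g$-modules $\pi : \tilde M \twoheadrightarrow M$ extending $\mathrm{id}_N$. Because the Borcherds-identity relations already hold on $M$, we have $S = \B\tilde M \subseteq K := \ker\pi$, so $\pi$ descends to a surjection $M^p(N) = \tilde M/S \twoheadrightarrow M$. Now $K/S$ is a graded $V$-submodule of $M^p(N)$ meeting $N$ trivially, so $K/S \subseteq I/S$ by the maximality built into the definition of $I$, and consequently $I/K$ is a graded $V$-submodule of $M$ meeting $M_p$ trivially; the second condition of $p$-irreducibility then forces $I/K = 0$, i.e., $K = I$ and $M = \tilde M/I = L^p(N)$. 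Part 3 follows immediately: $L^p(N)$ is $p$-irreducible by the very maximality of $I$, so $L^p$ lands in the $p$-irreducible subcategory, on which it is mutually quasi-inverse to $\Omega_p$ by parts 1 and 2.

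For part 4 it suffices, by part 3, to prove the algebra-side equivalence that $L^p(N)$ is normalized if and only if $N$ is $p$-founded; the module-side version (\emph{$M$ is normalized iff $\Omega_p(M)$ is $p$-founded}) will then follow by applying part 2 to $M \cong L^p(M_p)$, since $M_0 \cong L^p(M_p)_0$. I prove the equivalence by contrapositive in both directions, deliberately using a shift of size exactly one so as to remain within the integer levels. First, suppose $L^p(N)_0 = 0$: the shifted module $L^p(N)[-1]$ (whose $j$-th piece is the original $(j+1)$-th piece) is still a positive energy $V$-module, it is $(p-1)$-irreducible with $(p-1)$-th piece $N$, so $\Omega_{p-1}$ endows $N$ with a $\zhu_{p-1,\G}(V)$-structure that agrees with its given $\zhu_{p,\G}(V)$-structure through $\phi_p$ (both actions come from $a_0^{L^p(N)}$ on $N$), showing $N$ is not $p$-founded. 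Conversely, if $N$ is not $p$-founded, write $\bar N$ for its underlying $\zhu_{p-1,\G}(V)$-module; then $L^{p-1}(\bar N)[+1]$ is a $p$-irreducible positive energy $V$-module with vanishing $0$-th piece and with $p$-th piece recovering $N$ as a $\zhu_{p,\G}(V)$-module via $\Phi_p$, so by part 3 it must coincide with $L^p(N)$, giving $L^p(N)_0 = 0$.

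The main obstacle is the compatibility bookkeeping in part 4: in each direction of the shift argument one must check that the $\zhu_{p-1,\G}(V)$-structure on $N$ produced by the shift matches, via $\phi_p$, the original $\zhu_{p,\G}(V)$-structure, so that part 3 can legitimately be invoked. This amounts to commutativity of diagram \eqref{factorcomm} applied to $N$, but it must be tracked carefully through both induction and restriction. The other steps are either essentially definitional (parts 1 and 3) or straightforward once the maximality of $I$ is invoked (part 2).
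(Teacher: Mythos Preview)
Your proof is correct, and parts 1--3 proceed essentially as in the paper (you work at the level of $\tilde M$ rather than $M^p(N)$ in part 2, but the submodule comparison is identical).

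The interesting divergence is in the converse direction of part 4. The paper proves the commutativity of the square $D_+ \circ L^{p-1} \cong L^p \circ \Phi_p$ \emph{directly}: it compares the induced $\g$-modules $\tilde M' = \ind_{\g_0 + \g_{>p}}^{\g} N$ and $\tilde M = \ind_{\g_0 + \g_{>p-1}}^{\g} N$, identifies $\tilde M$ as $\tilde M' / U(\g) K_p N$ where $K_p = \oplus_{p-1 < j \le p}\g_j$, and then must show that the $V$-submodule of $L^{p-1}(N)$ generated by $K_p N$ meets $N$ trivially. This last step requires checking that $a_{-j} b_j x = 0$ in $L^{p-1}(N)$ for all $p-1 < j \le p$, which the paper accomplishes via an explicit index estimate using Lemma \ref{akbk}. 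You instead note that $L^{p-1}(\bar N)[+1]$ is manifestly $p$-irreducible with $\Omega_p$ recovering $N$ (through $\Phi_p$), and then invoke the already-established equivalence in part 3 to identify it with $L^p(N)$. Your route is shorter and avoids the computation with Lemma \ref{akbk}; the paper's route, on the other hand, establishes the commutative square as an independent fact, which is what lets them observe later (in the appendix) that the same argument extends to non-integer levels via Lemma \ref{akbkprime}.
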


\begin{proof}
The first claim follows from the results of the last section.

For the second claim, let $N = \Omega_p(M)$, $M' = M^p(N)$, and $L = L^p(N)$. Because $M$ is generated over $V$ by $N$, we have that $M$ is a quotient of $M'$, so let $M = M' / J$. The graded submodule $J$ must have zero intersection with $M'_p$ because $M_p \cong M'_p \cong N$. It must be maximal with this property because, by assumption, all nonzero graded submodules of $M$ have nonzero intersection with $M_p$. Thus $M = M' / J = L$.

The third claim follows from the first two.

For the fourth claim, let $N \in \rep(\zhu_{p, \G}(V))$ and suppose $M = M^p(N)$ is not normalized, i.e., that $M_0 = 0$. We lower the degree of every vector in $M$ by $1$, and apply the functor $\Omega_{p-1}$, to obtain $N$ as a $\zhu_{p-1, \G}(V)$-module. This action of $\zhu_{p-1, \G}(V)$ on $N$ is factored from the $\zhu_{p, \G}(V)$-action via $\phi_p$. Therefore if $N$ is $p$-founded then $M$ is normalized.

On the other hand if $N$ is not $p$-founded then $M_0 = 0$; this is implied by the following claim: Let $D_+ : \perep(V) \rightarrow \perep(V)$ be the functor that raises the degree of every vector in a module by $1$. Then
\begin{align*}
\begin{split}
\xymatrix{
\rep(\zhu_{p-1, \G}(V)) \ar@{->}[d]_{L^{p-1}} \ar@{->}[r]^{\Phi_p} & \rep(\zhu_{p, \G}(V)) \ar@{->}[d]_{L^p} \\
\perep(V) \ar@{->}[r]_{D_+} & \perep(V) \\
}
\end{split}
\end{align*}
commutes.

Let $N \in \rep(\zhu_{p-1, \G}(V))$. Recall Lemma \ref{liezhu}, and in particular the commutative diagram (\ref{liecomm}), it implies that $N$ acquires the same $\g_0$-module structure via $\zhu_{p-1, \G}(V)$ and via $\zhu_{p, \G}(V)$ (recall that $\g = \lie V$).

In the construction of $L^{p-1}(N)$ we induce from $N$ as a module over $\g_+ = \g_0 + \g_{>p-1}$ to get the $\g$-module $\tilde{M}$. In the construction of $L^p(N)$ we induce from $N$ as a module over $\g'_{+} = \g_0 + \g_{>p}$ to get the $\g$-module $\tilde{M}'$. In each case there is a choice of grading, and these are different by our conventions; this is taken care of by $D_+$.

By the PBW theorem, both $\tilde{M}$ and $\tilde{M}'$ are spanned by monomials
\begin{align*}
(a^1)^M_{n_1} \cdots (a^s)^M_{n_s} x,
\end{align*}
where $x \in N$, $a^i \in V$ and $n_1 \leq n_2 \leq \ldots \leq n_s$. For $\tilde{M}$, $n_s \leq p-1$ and for $\tilde{M}'$, $n_s \leq p$. There is a natural homomorphism of $\g$-modules $\tilde{M}' \twoheadrightarrow \tilde{M}$ whose kernel is the span of monomials with $p-1 < n_s \leq p$. In other words $\tilde{M} = \tilde{M}' / U(\g)K_p N$, where $K_p = \oplus_{p-1 < j \leq p} \g_j$.

We then obtain our $V$-modules $L^{p-1}(N)$ and $L^p(N)$ as quotients of $\tilde{M}$ and $\tilde{M}'$ by their respective maximal $\g$-submodules having zero intersection with $N$. If we show that the $V$-submodule of $L^{p-1}(N)$ generated by $K_p N$ has zero intersection with $N$, then we have $L^{p-1}(N) \cong L^p(N)$. By Lemma \ref{lithesis}, this intersection is $\oplus_{p-1 < j \leq p} \g_{-j} \g_j N \cap N$. Hence it suffices to show that $a_{-j} b_j x = 0$ for all $a, b \in V$ such that $[\eps_a] + [\eps_b] = [0]$, $x \in N$, and $p-1 < j \leq p$ (here we write $a_j$ in place of the more cumbersome $a^{L^{p-1}(N)}_j$).

Lemma \ref{akbk} implies that $a_{-j} b_j x$ is a linear combination of terms of the form $(a_{[n]}b)_0 x$ where
\begin{align*}
n \leq -p-1-j+\eps_a < -2p+\eps_a \leq -2(p-1)-2+\chi(a, b).
\end{align*}
This inequality implies that $a_{-j} b_j x = 0$ because $N$ is a $\zhu_{p-1, \G}(V)$-module.
\end{proof}

\begin{defn}
The $\G/\Z$-graded vertex algebra $V$ is \emph{$(\G, H)$-rational} if
\begin{itemize}
\item $\perep(V)$ has finitely many irreducible elements up to isomorphism and degree-shifts.

\item The graded subspaces $M_n$ of an irreducible $M \in \perep(V)$ are each finite-dimensional.

\item Any element of $\perep(V)$ is a direct sum of irreducible elements.
\end{itemize}
\end{defn}

\begin{prop}
If $V$ is $(\G, H)$-rational then each $\zhu_{p, \G}(V)$ is a finite dimensional semisimple algebra.
\end{prop}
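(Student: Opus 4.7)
The plan is to transport the rationality hypothesis on $V$ across the correspondence $\Omega_p, L^p$ established in Theorem \ref{fullequiv}. The key observation is that we have a natural isomorphism $\Omega_p \circ L^p \cong \mathrm{id}_{\rep(\zhu_{p, \G}(V))}$, so we may realise any $\zhu_{p, \G}(V)$-module as the top graded piece of a positive energy $V$-module, where rationality gives us a semisimple decomposition for free.

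First I would apply $L^p$ to the left regular module and form $M = L^p(\zhu_{p, \G}(V)) \in \perep(V)$. By $(\G, H)$-rationality, $M$ decomposes as a direct sum $M = \bigoplus_{\alpha \in I} M^{(\alpha)}$ of irreducible objects of $\perep(V)$, drawn up to isomorphism and degree-shift from a fixed finite list, each with finite-dimensional graded components. Since morphisms in $\perep(V)$ preserve degree, the functor $\Omega_p$ (extracting the degree-$p$ piece) commutes with arbitrary direct sums, so applying $\Omega_p$ and invoking Theorem \ref{fullequiv} yields
\[
\zhu_{p, \G}(V) \;\cong\; \Omega_p(M) \;=\; \bigoplus_{\alpha \in I} M^{(\alpha)}_p,
\]
and by Proposition \ref{smallirred} each summand $M^{(\alpha)}_p$ is either zero or an irreducible $\zhu_{p, \G}(V)$-module, necessarily finite-dimensional by rationality.

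Next I would exploit the fact that $\zhu_{p, \G}(V)$ is unital: its unit $1 = [\vac]$ lies in a finite sub-sum, say $1 = \sum_{\alpha \in F} n_\alpha$ with $F \subseteq I$ finite, and therefore $a = a \cdot 1 \in \bigoplus_{\alpha \in F} M^{(\alpha)}_p$ for every $a \in \zhu_{p, \G}(V)$. This forces the index set $I$ to be effectively finite, so $\zhu_{p, \G}(V)$ is a finite direct sum of finite-dimensional irreducible left modules over itself. Finite-dimensionality is then immediate, and semisimplicity of the regular module is the standard characterisation of a semisimple algebra.

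The step that warrants the most care is the compatibility of $L^p$ and $\Omega_p$ with the direct sum decomposition of $M$ in $\perep(V)$; this reduces to the observations that direct sums in $\perep(V)$ preserve the $\R_+$-grading and that $\Omega_p$ is additive, both of which are straightforward from the definitions in Section \ref{restrictfunct}. No new vertex-algebraic computation is needed beyond what is already in Theorem \ref{fullequiv}.
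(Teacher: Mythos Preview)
Your proof is correct and follows essentially the same approach as the paper: apply $L^p$ to a $\zhu_{p,\G}(V)$-module, decompose using rationality, apply $\Omega_p$ and Proposition~\ref{smallirred} to recover a decomposition into irreducibles, then use the unit element to force finiteness. The only cosmetic difference is that the paper first runs this argument for an arbitrary module $N$ to conclude semisimplicity, and then specialises to the regular module for finite-dimensionality, whereas you go straight to the regular module and invoke the characterisation of semisimplicity via the regular representation; the content is the same.
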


\begin{proof}
Let $N \in \rep(\zhu_{p, \G}(V))$ and put $M = L^p(N)$. Then $M$ decomposes into a direct sum of irreducible $(\G, H)$-twisted positive energy $V$-modules and so $N = \Omega_p(M)$ is a direct sum of irreducible $\zhu_{p, \G}(V)$-modules by Proposition \ref{smallirred}. Thus $\zhu_{p, \G}(V)$ is semisimple.

In particular $\zhu_{p, \G}(V)$ itself decomposes into a direct sum of irreducible modules (left ideals). We can decompose the unit element as a finite sum $\vac = e_1 + \cdots + e_n$ where each $e_i$ lies in a distinct ideal. These are precisely the nonzero ideals in the sum. Because of rationality each ideal has finite dimension and thus $\zhu_{p, \G}(V)$ itself has finite dimension.
\end{proof}

We can prove a converse to this result if we assume our vertex algebra $V$ is a vertex operator algebra with nonzero central charge $c$, and that elements of $V$ have only non-negative conformal weights.
\begin{lemma} \label{infinite}
Let $V$ be a $\G/\Z$-graded vertex operator algebra with energy-momentum field $L(z) = Y(\omega, z)$ and central charge $c \in \C \backslash \{0\}$. Let $M \in \perep(V)$. Then
\begin{itemize}
\item $M$ is not bounded (we say a graded $V$-module $M$ is \emph{bounded} if $M_n$ is nonzero for only finitely many $n$).

\item If $M$ is irreducible then $L^M_{-1} : M_n \rightarrow M_{n+1}$ is injective for $n \gg 0$.

\item If $M$ is irreducible then $M_n \neq 0$ for $n \gg 0$.
\end{itemize}
\end{lemma}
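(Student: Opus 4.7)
For bullet (1), suppose for contradiction that $M$ is bounded, let $N = \max\{j : M_j \neq 0\}$, and pick a nonzero $x \in M_N$. Then $L^M_{-k} x \in M_{N+k} = 0$ for all $k \geq 1$, while $L^M_k x = 0$ for $k \gg 0$ since $Y^M(\omega, z)x \in M((z))$. The Virasoro commutator gives $(2k L_0 + \tfrac{k^3-k}{12} c) x = 0$ for all sufficiently large $k$; comparing two such relations for $k_1 \neq k_2$ and subtracting yields $c(k_1^2 - k_2^2) x = 0$, contradicting $c \neq 0$.

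For bullet (2), let $j_0$ be the lowest grade with $M_{j_0} \neq 0$, and let $h \in \R$ be the $L_0$-eigenvalue on $M_{j_0}$ (a scalar by Remark \ref{inequivrem}); then $L_0$ acts on $M_{j_0+k}$ as $h+k$. Suppose for contradiction that there is $0 \neq x \in M_n$ with $L^M_{-1} x = 0$ for $n$ arbitrarily large. The plan is to use the bracket identity
\[
[L_1, L_{-1}^{k+1}] = (k+1) L_{-1}^k (2 L_0 + k),
\]
proved by induction from $[L_1, L_{-1}] = 2 L_0$, combined with the observation that $L_{-1}^{k+1} z_k = 0$ whenever $L_{-1}^k z_k = x$. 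A short calculation then shows that the choice
\[
z_{k+1} \;:=\; -\tfrac{1}{(k+1)(2h + 2n - k - 2j_0)}\, L_1 z_k \;\in\; M_{n-k-1}
\]
satisfies $L_{-1}^{k+1} z_{k+1} = x$; the finitely many denominators are nonzero for all $k = 0, 1, \ldots, n-j_0-1$ once $n$ is sufficiently large. Iterating yields $x = L_{-1}^{n-j_0} u$ for some $u \in M_{j_0}$. Since $M$ is positive-energy, $L_k u = 0$ for all $k \geq 1$, so $u$ is a Virasoro highest-weight vector of weight $h$.

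A further induction on $m$, again using $[L_1, L_{-1}] = 2L_0$, produces the standard identity $L_1^m L_{-1}^m u = m! \prod_{j=0}^{m-1}(2h + j)\, u$ for such a highest-weight $u$. Applying this with $m = n - j_0 + 1$ and using $L_{-1}^m u = L_{-1} x = 0$ gives $m! \prod_{j=0}^{m-1}(2h+j)\, u = 0$. For $n$ sufficiently large and $2h \notin \Z_{\leq 0}$ the product is nonzero, forcing $u = 0$ and hence $x = 0$, the desired contradiction. The main obstacle will be the exceptional case $2h \in \Z_{\leq 0}$, where the product vanishes identically for $n$ large. In that case the plan is to analyze the Virasoro Verma module $V(c, h)$ directly: the Kac determinant formula guarantees low-level singular vectors in $V(c, h)$, which force a short relation on $u$ in the Virasoro submodule of $M$ generated by $u$. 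For instance at $h = 0$, the singular vector $L_{-1} v_0 \in V(c, 0)$ at level $1$ forces $L_{-1} u = 0$, whence $x = L_{-1}^{n-j_0} u = 0$ immediately; an analogous argument handles each remaining exceptional value.

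For bullet (3), combine (1) and (2): by (1) there exist arbitrarily large $n$ with $M_n \neq 0$, by (2) there is a threshold $n_0$ beyond which $L^M_{-1}$ is injective on each $M_n$, and choosing $n_1 \geq n_0$ with $M_{n_1} \neq 0$ produces, by iterated injectivity, a nonzero $M_n$ for every $n \geq n_1$.
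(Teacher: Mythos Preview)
Your arguments for bullets (1) and (3) are correct and essentially match the paper's. Bullet (2), however, takes a genuinely different route from the paper.

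\textbf{The gap, and its fix.} Your construction of $u \in M_{j_0}$ with $x = L_{-1}^{\,n-j_0} u$ and $L_1 u = 0$ is fine (the denominators $2h+2n-k-2j_0$ are indeed nonzero for $0 \le k \le n-j_0-1$ once $n$ is large, whether or not $h$ is real). The trouble is only in the last step. Applying $L_1^{m}$ to $L_{-1}^{m}u$ with $m = n-j_0+1$ forces you to control the entire product $\prod_{j=0}^{m-1}(2h+j)$ and creates your ``exceptional case'' $2h\in\Z_{\le 0}$. The Kac--determinant paragraph does not close this: the existence of a singular vector in the Verma module $V(c,h)$ says nothing about an arbitrary highest-weight quotient, so ``forces $L_{-1}u=0$'' is not justified. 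But the whole exceptional case is an artifact: apply $L_1$ \emph{once} to $L_{-1}^{m}u = 0$ and use your own commutator identity to get
\[
0 \;=\; L_1 L_{-1}^{m} u \;=\; m\,(2h+m-1)\,L_{-1}^{m-1}u \;=\; (n-j_0+1)\,(2h+n-j_0)\,x .
\]
For $n$ large the factor $2h+n-j_0$ is nonzero regardless of $h$, so $x=0$ immediately. (A minor point: if $\overline{\G}\neq\Z$ then $n-j_0$ need not be an integer; simply iterate until $L_1 z_K = 0$, which happens at some $K\le\lfloor n\rfloor$, and run the same one-line computation there.)

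\textbf{Comparison with the paper.} The paper's argument for bullet (2) is quite different and shorter: from $L_{-1}^M x = 0$ and the bracket $[L_{-1},a_{(k+1)}] = -(k+1)a_{(k)}$ it deduces that $a_{(k)}^M x = 0$ for all $a\in V$ and $k\ge 0$, hence the $V$-submodule $Vx$ sits in degrees $\ge \deg x - 1$; irreducibility then forces $\deg x$ to be at most one above the bottom. This uses the full $V$-module structure and the standing hypothesis (stated just before the lemma) that all conformal weights $\Delta_a$ are non-negative. Your approach, by contrast, is purely Virasoro and---once repaired as above---does not need the non-negativity of conformal weights; in that sense it is slightly more general, at the cost of the extra inductive construction of $u$.
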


\begin{proof}
Suppose $M$ is bounded, then there exists $N > 0$ such that $L^M_n M = 0$ whenever $|n| \geq N$. For any $x \in M$ we have
\begin{align*}
(2N+1) L^M_1 x &= [L^M_{N+1}, L^M_{-N}] x = 0 \\
\text{and} \quad
(2N+1) L^M_{-1} x &= [L^M_N, L^M_{-N-1}] x = 0, \\
\text{hence} \quad
2 L^M_0 x &= [L^M_1, L^M_{-1}] x = 0.
\end{align*}
But also
\begin{align*}
0 = [L^M_N, L^M_{-N}] x = 2N L^M_0 x + \frac{N^3-N}{12} c x.
\end{align*}
So $c = 0$, which is a contradiction.

Now assume $M$ is irreducible. Let $x \in M$ be of homogeneous degree, suppose $L^M_{-1} x = 0$, and let $a \in V$ be of homogeneous conformal weight. Either $Y^M(a, z) x = 0$, or there is some $a^M_{(k)} x \neq 0$. Suppose the latter case holds, and let $k \in [\ga_a]$ be maximal with this property. Then
\begin{align*}
-(k+1) a^M_{(k)} x = [L^M_{-1}, a^M_{(k+1)}] x = 0,
\end{align*}
so we must have $k = -1$. If $[\ga_a] \neq [0]$ we have a contradiction, so $Y^M(a, z) x = 0$. If $[\ga_a] = [0]$ then we have $a^M_{(n)} x = 0$ for $n \geq 0$. In either case we have $a^M_m x \neq 0$ only if $m < -\D_a + 1 \leq 1$.

Therefore the elements of the $V$-submodule $V x \subseteq M$ have degrees at least $\deg x-1$. Let $d$ be the lowest degree that occurs among the elements of $M$. If $\deg x > d+1$ then $V x$ is a proper submodule of $M$, which contradicts irreducibility. Thus there exists $N = d+1 > 0$ such that if $\deg x \geq N$, then $L^M_{-1} x \neq 0$.

The final statement follows from the first two. $M$ is unbounded, i.e., there is a nonzero $M_n$ for some $n \geq N$. But then $L^M_{-1}$ is injective on $M_j$ for $j \geq n$. Hence $M_n, M_{n+1}, M_{n+2}, \ldots \neq 0$.
\end{proof}

\begin{thm} \label{rati}
Let $V$ be a $\G/\Z$-graded vertex operator algebra with energy-momentum field $L(z) = Y(\omega, z)$ with central charge $c \neq 0$. If $\zhu_{p, \G}(V)$ is a finite dimensional semisimple algebra for each $p \in \R_+$ then $V$ is $(\G, H)$-rational.
\end{thm}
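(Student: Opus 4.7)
I would verify the three conditions defining $(\G, H)$-rationality in turn. For the first two, given an irreducible $M \in \perep(V)$, Proposition \ref{smallirred} makes each $M_n = \Omega_n(M)$ either zero or an irreducible $\zhu_{n, \G}(V)$-module, hence finite-dimensional by hypothesis. Moreover, since $c \neq 0$, Lemma \ref{infinite} permits a degree shift so that $M_0 \neq 0$; irreducibility of $M$ then forces $M = V \cdot M_0$ with every nonzero graded submodule equal to $M$, so $M$ is a normalized $0$-irreducible $V$-module and Theorem \ref{fullequiv} identifies $M \cong L^0(M_0)$. Since the finite-dimensional semisimple algebra $\zhu_{0, \G}(V)$ has only finitely many irreducible modules, there are only finitely many isomorphism classes of irreducible $V$-modules up to degree shift.

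The essential content is semisimplicity: every $M \in \perep(V)$ decomposes as a direct sum of irreducible submodules. I would apply Zorn's lemma to select a maximal family $\{L_\alpha\}$ of irreducible graded $V$-submodules of $M$ whose sum $M' = \bigoplus_\alpha L_\alpha$ is direct, and argue by contradiction that $M' = M$. Assuming $M' \subsetneq M$, let $p$ be the minimal degree with $M_p \neq M'_p$; then $M_j = M'_j$ for $j < p$, and by semisimplicity of $\zhu_{p, \G}(V)$ the inclusion $M'_p \subseteq M_p$ admits a complement, from which I select an irreducible $\zhu_{p, \G}(V)$-submodule $N' \subseteq M_p$ with $N' \cap M'_p = 0$. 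The goal is to exhibit an irreducible $V$-submodule $L \subseteq M$ with $L_p = N'$: such an $L$ cannot lie in $M'$ (else $N' = L_p \subseteq M'_p$), so by irreducibility $L \cap M' = 0$, contradicting the maximality of the family $\{L_\alpha\}$.

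To construct $L$, the $\zhu_{p, \G}(V)$-map $N' \hookrightarrow M_p$ is a $(\lie V)_0$-map via Lemma \ref{liezhu}, and extends to a $\g_+$-module map (using that $\g_{>p}$ annihilates both $N'$ and $M_p$, the latter by positive energy). By the universal property of induction together with the vanishing of the Borcherds relations on any $V$-module, this lifts to a $V$-module homomorphism $M^p(N') \to M$ with image $K = V \cdot N' \subseteq M$, satisfying $K_p = N'$. The crucial observation is that $K_j \subseteq M_j = M'_j$ for $j < p$, so the $V$-submodule $K \cap M' \subseteq K$ has trivial degree-$p$ part and the quotient $K/(K \cap M')$ embeds into $M/M'$ as a module generated by $N'$ in degree $p$ with no graded pieces below. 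Using the semisimplicity of $\zhu_{q, \G}(V)$ for each $q \geq p$, one then argues that $K/(K \cap M')$ is $p$-irreducible, hence isomorphic to the irreducible $V$-module $L^p(N')$, and that coherent $\zhu_{q, \G}(V)$-complements to $(K \cap M')_q$ in $K_q$ can be chosen across all $q \geq p$ to assemble into a $V$-submodule of $K$ isomorphic to $L^p(N')$. The principal obstacle is this final step: showing that the graded-piece complements can be taken compatibly with the $V$-action to yield a genuine $V$-submodule. This requires a careful degree-by-degree induction that draws on the full strength of the semisimplicity assumption across all Zhu algebras, adapting the analogous argument in the integer-weight setting of \cite{DLM3}.
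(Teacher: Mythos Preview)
Your treatment of the first two rationality conditions is correct and essentially matches the paper. The gap is in the semisimplicity argument, and you name it yourself: having produced $K = V \cdot N' \subseteq M$ with $K_p = N'$, you need an irreducible $V$-submodule inside, but the degree-by-degree complement construction you sketch is precisely the sort of $V$-module splitting you are trying to establish. Semisimplicity of each $\zhu_{q,\G}(V)$ gives splittings at each fixed degree, but nothing in your outline forces these to cohere into a $V$-submodule; the appeal to \cite{DLM3} is a deferral rather than an argument. Your intermediate claim that $K/(K\cap M')$ is $p$-irreducible is also unproved: it is generated in degree $p$ and has no lower pieces, but you give no reason why every nonzero graded submodule must meet degree $p$.

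The paper avoids both difficulties by a different device. Instead of working at the minimal degree of discrepancy, it begins with an irreducible $\zhu_{0,\G}(V)$-summand $N' \subseteq M_0$, sets $M' = V N'$, and lets $W$ be the irreducible quotient of $M'$. The crucial move is to invoke Lemma~\ref{infinite} together with the finiteness of the irreducible list to find a bound $K$ such that every irreducible has nonzero degree-$p$ piece once $\Real(L_0)$ there exceeds $K$; choosing $p > K - \lambda$ (with $\lambda = L_0|_{N'}$) then forces $M^p(W_p) = L^p(W_p)$, since the maximal submodule $J$ with $J_p = 0$ would otherwise have an irreducible quotient violating the bound. Semisimplicity of $\zhu_{p,\G}(V)$ gives $W_p \hookrightarrow M'_p$, and the chain $M^p(W_p) \twoheadrightarrow V W_p \twoheadrightarrow L^p(W_p)$ collapses to isomorphisms, so $V W_p \subseteq M$ is irreducible outright. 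No extension of $V$-modules is ever split: the choice of $p$ makes the universal module $M^p$ already equal to its irreducible quotient, so the submodule it generates is irreducible on the nose. This is the idea your argument is missing.
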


For this theorem and its proof we use that all the Zhu algebras $\zhu_{p, \G}(V)$ (even for $p \notin \Z_+$) are finite dimensional and semisimple. The general definition of the Zhu algebras is given in appendix \ref{pnotinZ}.

\begin{proof}
The normalized irreducible $(\G, H)$-twisted positive energy $V$-modules are in bijection with the irreducible $\zhu_{0, \G}(V)$-modules. Therefore, up to isomorphism and degree-shifts, there are finitely many irreducible objects in $\perep(V)$. For each such normalized irreducible $M$, $M_p$ is an irreducible $\zhu_{p, \G}(V)$-module, hence finite dimensional. Let $M \in \perep(V)$ be one such irreducible object. By the argument from Remark \ref{inequivrem}, $L^M_0|_{M_n} = \la + n$ for all $n \in \R_+$, for some $\la \in \C$. By Lemma \ref{infinite}, $M$ has some graded piece $M_n$ such that it and all higher graded pieces of $M$ are nonzero. Therefore there exists $K \in \Z_+$ with the following property: For all irreducible $M \in \perep(V)$ and $n \in \R_+$ such that $\Real(L^M_0|_{M_n}) > K$, we have $M_n \neq 0$.

Let $M \in \perep(V)$, suppose $M$ is normalized, and let $N = \Omega_0(M)$. Since $\zhu_{0, \G}(V)$ is semisimple, we have the direct sum of submodules $N = N' \oplus N''$ where $N'$ is irreducible. We have $L^M_0|_{N'} = \lambda$ a scalar.

Let $M' = V N' \subseteq M$, and let $W$ be the irreducible quotient of $M'$. Let $K$ be as above and let $p > K - \lambda$. We have
\[
W \cong L^p(W_p) = M^p(W_p) / J,
\]
where $J$ is the unique maximal ideal such that $J_p = 0$. Either $J = 0$ or the irreducible quotient $\overline{J}$ of $J$ (which satisfies $\overline{J}_p = 0$) is zero because of our choice of $p$. Either way, $J = 0$ and so $M^p(W_p) \cong W \cong L^p(W_p)$.

The canonical surjection $M' \twoheadrightarrow W$ induces $M'_p \twoheadrightarrow W_p$. Because $\zhu_{p, \G}(V)$ is semisimple, we obtain an inclusion $W_p \hookrightarrow M'_p$, so we regard $W_p \subseteq M_p$ now. There are homomorphisms of $V$-modules
\[
M^p(W_p) \rightarrow V W_p \rightarrow L^p(W_p),
\]
these must each be isomorphisms. Therefore $V W_p \subseteq M'$ is an irreducible $V$-module (isomorphic to $W$).

By Lemma \ref{lithesis}, $V W_p \cap M_0 \subseteq N'$. The submodule $\ker(M' \twoheadrightarrow W) \subseteq M'$ cannot contain $N'$, for then it would equal all of $M'$. Hence $V W_p$ has nonzero intersection with $N'$. But $N'$ is irreducible, so $VW_p \cap N' = N'$. Hence $M' = V W_p$, an irreducible submodule of $M$ with $M'_0 = N'$.

We apply this argument to each irreducible summand of $N$ to obtain a sum (which is obviously direct) of irreducible $V$-modules $M^1 \oplus \cdots \oplus M^n \subseteq M$. If this inclusion is an equality then we are done. If not, then we let $r \in \R_+$ be minimal such that $(M / (M^1 \oplus \cdots \oplus M^n))_r \neq 0$. Because $\zhu_{r, \G}(V)$ is semisimple, we have $\Omega_r(M) = (M^1 \oplus \cdots \oplus M^n)_r \oplus N^{(r)}$ for some $\zhu_{r, \G}(V)$-module $N^{(r)}$. We repeat the arguments above on $N^{(r)}$, obtaining further irreducible summands. Ultimately, we can write $M$ as a (direct) sum of irreducible $V$-modules in this way.
\end{proof}

\begin{rmrk}
Using the maps $\phi_p$ it is possible to use the same proof to establish this theorem if $\zhu_{p, \G}(V)$ is semisimple and finite-dimensional only for $p \gg 0$.
\end{rmrk}


\section{Computation of Zhu algebras} \label{examples}

\subsection{\texorpdfstring{Alternative constructions of $\zhu_{p, \G}(V)$}{Alternative Constructions of the Zhu Algebra}}

Let $V$ be a $\G/\Z$-graded vertex algebra with associated Lie superalgebra $\g = \lie V$. In section \ref{inducfunct} we introduced the completed universal enveloping algebra $\hat{U}$, and its subspace $\B$. Let $\hat{\B}$ denote the closure of $\B$ in $\hat{U}$.

The important Lemma 2.26 of \cite{DK} states that $[c_s, \B] \subseteq \B$ for all $c \in V$, $s \in [\eps_c]$. It follows that $\hat{U} \hat{\B}$ is a (graded) $2$-sided ideal in $\hat{U}$. Therefore we may define
\[
U_{\G}(V) = \hat{U} / \hat{U} \hat{\B},
\]
a graded unital associative algebra.

The degree $0$ piece $(U_\G(V))_0$ is a subalgebra; its subspace $(U_\G(V) (U_\G(V)_{>p}))_0$ is a $2$-sided ideal. Therefore we have another unital associative algebra\footnote{In both cases the unit element $1$ might coincide with $0$ though.}
\[
W_{p, \G}(V) = \frac{(U_{\G}(V))_0}{(U_{\G}(V) (U_{\G}(V)_{>p}))_0}.
\]
We also define
\[
Q_{p, \G}(V) = \frac{U_{\G}(V)}{U_{\G}(V) (U_{\G}(V)_{>p})}.
\]
Since $U_{\G}(V) (U_{\G}(V)_{>p})$ is a left ideal in $U_{\G}(V)$ but not a $2$-sided ideal, $Q_{p, \G}(V)$ is a left $U_\G(V)$-module but not an algebra.

Let us mimic the construction of section \ref{inducfunct} on the $\g_0$-module $N = U(\g_0)$. We obtain the $U(\g)$-module $\tilde{M} = U(\g) / U(\g)(U(\g)_{>p})$, and then the quotient $M = \tilde{M} / \B \tilde{M}$ which is a $(\G, H)$-twisted positive energy $V$-module. One may check that $M \cong Q_{p, \G}(V)$ (having degree-shifted the latter module to put the unit element in degree $p$), thus $M_p \cong W_{p, \G}(V)$. We claim that $W_{p, \G}(V) \cong \zhu_{p, \G}(V)$.

Let $M$ be a $(\G, H)$-twisted positive energy $V$-module. Since $M$ is a $U(\g)$-module such that $a^M_n x = 0$ for $n \gg 0$, it makes sense to regard $M$ as a $\hat{U}$-module. Because $M$ is a $V$-module, it is a $U_{\G}(V)$-module. Each graded piece of $M$ is a $U_{\G}(V)_0$-module. Furthermore $(U_{\G}(V)U_{\G}(V)_{>p})_0$ annihilates $M_p$, so $M_p$ is a $W_{p, \G}(V)$-module. In particular if we induce the adjoint representation of $\zhu_{p, \G}(V)$ to the $(\G, H)$-twisted positive energy $V$-module $M$, such that $M_p = \zhu_{p, \G}(V)$, we see that $\zhu_{p, \G}(V)$ is a $W_{p, \G}(V)$-module. This provides us with a homomorphism $W_{p, \G}(V) \rightarrow \zhu_{p, \G}(V)$ taking $[a_0] \mapsto [a]$.

On the other hand $Q_{p, \G}(V)$ is a $(\G, H)$-twisted positive energy $V$-module and its degree $p$ piece is $W_{p, \G}(V)$, which is therefore a $\zhu_{p, \G}(V)$-module. We obtain a homomorphism $\zhu_{p, \G}(V) \rightarrow W_{p, \G}(V)$ taking $[a] \mapsto [a_0]$. Thus $W_{p, \G}(V) \cong \zhu_{p, \G}(V)$.

It is clear that $W_{p, \G}(V)$ is associative and unital. It is straightforward to see that there is a restriction functor $M \mapsto M_p$ from $\perep(V)$ to $\rep(W_{p, \G}(V))$. However it is not clear from its definition that $W_{p, \G}(V)$ has an associated induction functor with the desired properties. This is the advantage that the construction $\zhu_{p, \G}(V)$ has over $W_{p, \G}(V)$.

In section \ref{maps} we noted that the set of all Zhu algebras of a fixed vertex algebra $V$, with the maps $\phi_p$, forms a directed system. The set of algebras $W_{p, \G}(V)$, with the natural projections, forms an isomorphic directed system. The inverse limit of this directed system is equal to the completion of $U_\G(V)_0$ with respect to the system $(U_\G(V) U_\G(V)_{>p})_0$ of neighborhoods of $0$. Indeed
\begin{align*}
\varprojlim W_{p, \G}(V) \cong U_{\G}(V)_0.
\end{align*}

\subsection{A simplified construction for universal enveloping vertex algebras}

In this section we discuss Lie conformal algebras and their universal enveloping vertex algebras. The facts we use are laid out in section 1.7 of \cite{DK}, more details can be found in \cite{Kac}. For simplicity we deal only with the untwisted case here.

\begin{defn}
A \emph{Lie conformal (super)algebra} $(R, T, [\cdot_\la \cdot])$ is a $\C[T]$-module $R$, endowed with a \emph{$\la$-bracket}, $R \otimes R \rightarrow R \otimes \C[\la]$ (denoted $a \otimes b \mapsto [a_{\la}b]$) for which the following axioms hold:
\begin{itemize}
\item Sesquilinearity axiom, $[(Ta)_{\la}b] = -\la [a_{\la}b]$ and $[a_{\la}(Tb)] = (T + \la)[a_{\la}b]$.

\item Skew-commutativity axiom, $[b_{\la}a] = -p(a, b)[a_{-T-\la}b]$.

\item Jacobi Identity, $[a_{\la}[b_{\mu}c]] = [[a_{\la}b]_{\la + \mu}c] + p(a, b)[b_{\mu}[a_{\la}c]]$.
\end{itemize}
We write $[a_{\la}b] = \sum_{j \in \Z_+} \la^{(j)} a_{(j)}b$ where $a_{(j)}b \in R$. A homomorphism of Lie conformal algebras is a homomorphism of $\C[T]$-modules that sends $\la$-brackets to $\la$-brackets.
\end{defn}

\begin{exmp}
The \emph{Virasoro} Lie conformal algebra is $\vir = \C[T] L \oplus \C C$ with $TC = 0$. It suffices to define the $\la$-brackets between $L$ and $C$ and then all further $\la$-brackets are determined by the sesquilinearity axiom; $[L_{\la}L] = (T + 2\la)L + \la^3 C / 12$ and $C$ is central. We have $L_{(0)}L = TL$, $L_{(1)}L = 2L$, $L_{(3)}L = C / 2$ and all other $L_{(n)}L$ vanish.
\end{exmp}

\begin{exmp}
Let $\g$ be a finite-dimensional simple Lie algebra with invariant symmetric bilinear form $(\cdot, \cdot)$. The \emph{Current} Lie conformal algebra $\cur(\g)$ is $\cur(\g) = (\C[T] \otimes \g) \oplus \C K$ with $TK = 0$ (the elements $a \in \g \subseteq \cur(\g)$ are referred to as currents). The $\la$-brackets between currents are $[a_\la b] = [a, b] + \la (a, b) K$, the element $K$ is central. We have $a_{(0)}b = [a, b]$ and $a_{(1)}b = (a, b) K$.
\end{exmp}

Any vertex algebra can be given the structure of a Lie conformal algebra by defining the $\la$-bracket $[a_{\la}b] = \sum_{j \in \Z_+} \la^{(j)} a_{(j)}b$. One should keep in mind the following analogy: Vertex algebras are to Lie conformal algebras as associative algebras are to Lie algebras.

For a Lie conformal algebra $(R, T, [\cdot_{\lambda} \cdot])$, an enveloping vertex algebra is a pair $(U, \phi)$ where $U$ is a vertex algebra and $\phi : R \rightarrow U$ is a homomorphism of Lie conformal algebras. The universal enveloping vertex algebra $(V(R), \psi)$ is an enveloping vertex algebra such that for any enveloping vertex algebra $(U, \phi)$ there is a unique vertex algebra homomorphism $\pi : V(R) \rightarrow U$ such that the following diagram commutes
\begin{align*}
\xymatrix{
R \ar@{->}[d]_{\psi} \ar@{->}[dr]^{\phi} \\
V(R) \ar@{->}[r]_{\pi} & U \\
}
\end{align*}

As a vector space $V(R) = U(\lie R) / U(\lie R) (\lie R)_-$, where
\begin{align*}
(\lie R)_- &= \left<a_{(n)} | a \in R, n \geq 0\right> \\
\text{and} \quad
(\lie R)_+ &= \left<a_{(n)} | a \in R, n < 0\right>,
\end{align*}
and the map $R \rightarrow V(R)$ is $a \mapsto a_{(-1)}$.

If a Lie conformal algebra $R$ contains a copy $\C[T]L \oplus \C C \subseteq R$ of the Virasoro Lie conformal algebra, if $L_{(0)}a = Ta$ for all $a \in R$, and if $L_{(1)}$ is diagonalizable on $R$, then the image of $L$ in $V(R)$ is a Virasoro element. We assume our $R$ contains such an element $L$ for the remainder of this section. For $a \in R$ an eigenvector of $L_{(1)}$, let $\D_a$ be its eigenvalue.

Recall the definition of $\lie V$ following Lemma \ref{THann}. It turns out that it uses only that $V$ is a Lie conformal algebra. Hence we may define $\lie R$ for an arbitrary Lie conformal algebra by replacing all occurrences of $V$ in the definition with $R$. The conformal weight grading $a_n = a_{(n+\D_a-1)}$ is defined as usual.

We have that $V(\vir) / (C = c\vac) = \vir^c$ is the Virasoro vertex algebra of level $c$, and that $\lie \vir$ is the Virasoro Lie algebra (which is also given the symbol $\vir$). Similarly $V(\cur{\g}) / (K = k\vac) = V^k(\g)$, and $\lie \cur{\g} = \hat{\g}$, the affine Kac-Moody algebra.

\begin{lemma} \label{restrlieR}
There is a natural bijection between the set of all $V(R)$-modules and the set of restricted $\lie R$-module.
\end{lemma}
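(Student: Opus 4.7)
The plan is to construct functors in both directions and check they are mutually inverse.

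For the easy direction, a $V(R)$-module $M$ is automatically a $\lie V(R)$-module, as explained in the discussion following Lemma \ref{THann}. Composing with the canonical Lie superalgebra homomorphism $\lie R \to \lie V(R)$ arising from the Lie conformal algebra map $\psi : R \to V(R)$ gives $M$ the structure of a $\lie R$-module, and restrictedness is immediate because each $Y^M(\psi(a), z)$ is a quantum field.

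For the other direction, let $M$ be a restricted $\lie R$-module. For each $a \in R$ the series $a^M(z) = \sum_{n \in \Z} a^M_{(n)} z^{-n-1}$ is an $\en M$-valued quantum field by the restricted hypothesis. The commutation relations $[a^M_{(m)}, b^M_{(k)}] = \sum_{j \in \Z_+} \binom{m}{j} (a_{(j)}b)^M_{(m+k-j)}$, which come from the $\la$-bracket on $R$ in the manner of (\ref{norbrak}), are equivalent to pairwise locality of the family $\{a^M(z)\}_{a \in R}$, with non-negative $n^{\text{th}}$ products realizing the $\la$-bracket on the field side. Invoking the existence theorem for vertex algebras of mutually local fields (see \cite{Kac}), the minimal subspace $U \subseteq (\en M)[[z, z^{-1}]]$ containing $I_M$ and the $a^M(z)$, closed under all $n^{\text{th}}$ products, is a vertex algebra with a tautological action on $M$. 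The assignment $a \mapsto a^M(z)$ is then a homomorphism of Lie conformal algebras $R \to U$, and the universal property of $V(R)$ extends it uniquely to a vertex algebra homomorphism $V(R) \to U$, equipping $M$ with a $V(R)$-module structure.

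Mutual inverseness follows from uniqueness. Starting with a restricted $\lie R$-module and applying the extension recovers the original action on the nose, since the Fourier modes of $a^M(z)$ are unchanged. Conversely, starting from a $V(R)$-module, extracting its $\lie R$-structure and re-extending returns the same $V(R)$-action, because $\psi(R)$ generates $V(R)$ as a vertex algebra (cf.\ the description $V(R) = U(\lie R)/U(\lie R)(\lie R)_-$), leaving no freedom in the reconstruction. The main obstacle is the precise translation between the Jacobi identity for the $\la$-bracket on $R$ and the pairwise locality of the fields $a^M(z)$, together with a careful invocation of the existence theorem; both ingredients are classical in Kac's framework but comprise the substantive content of a detailed proof.
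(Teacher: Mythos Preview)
Your proposal is correct and follows essentially the same route as the paper: both directions match, and for the nontrivial direction you and the paper alike build an auxiliary vertex algebra of fields acting on $M$ and then invoke the universal property of $V(R)$. The only cosmetic difference is that the paper phrases the intermediate construction in terms of a regular formal distribution Lie algebra $(L, T, \mathcal{R})$ and its associated vertex algebra, whereas you invoke the existence theorem for mutually local fields directly; these are two packagings of the same machinery from \cite{Kac}, and your added remarks on mutual inverseness (which the paper leaves implicit) are a welcome clarification.
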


Recall that a \emph{restricted} $\lie R$-module is a $\lie R$-module $M$ such that for each $a \in R$ and $v \in M$ we have $a_{(n)} v = 0$ for $n \gg 0$.

\begin{proof}
Since $V(R) = U(\lie R) / U(\lie R) (\lie R)_-$, any $V(R)$-module $M$ is naturally a $\lie R$-module. Because each $Y^M(a, z)$ is a quantum field, $M$ is a \emph{restricted} $\lie R$-module.

Conversely, let $M$ be a restricted $\lie R$-module. Then we have a collection of $\en M$-valued quantum fields
\begin{align*}
\mathcal{R} = \{Y^M(a, z) = \sum_{n \in \Z} (at^n) z^{-n-1}\}_{a \in R}.
\end{align*}
Let $L$ denote the subalgebra of $\en M$ generated by $1$ and the operators $at^n|_M$. The algebra $L$ carries a derivation $T$, defined by $T(at^m) = -\partial_t(at^m)$, and extended naturally to all of $L$. The triple $(L, T, \mathcal{R})$ is a regular formal distribution Lie algebra (see \cite{Kac}), so it has an associated vertex algebra $V_1$. There is a representation of $V_1$ on $M$, extending the action of $R$.

There is a Lie conformal algebra homomorphism $R \to V_1$. Hence there is a vertex algebra homomorphism $\pi$ such that
\begin{align*}
R \overset{\psi}{\to} V(R) \overset{\pi}{\rightarrow} V_1.
\end{align*} 
Therefore there is a representation of $V(R)$ in $M$, which extends the representation of $\lie R$ in $M$.
\end{proof}

\begin{cor}
The level $p$ Zhu algebra of $V(R)$ is isomorphic to
\begin{align*}
Z_p(V(R)) = \frac{U(\lie R)_0}{(U(\lie R) U(\lie R)_{>p})_0}.
\end{align*}
\end{cor}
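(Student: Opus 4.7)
The plan is to combine the isomorphism $\zhu_{p, \G}(V) \cong W_{p, \G}(V)$ from the preceding subsection (specialised to the untwisted case $\G = \Z$) with a direct identification $W_{p, \Z}(V(R)) \cong Z_p(V(R))$, which I would construct via mutually inverse algebra maps.

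First, the inclusion of Lie superalgebras $\lie R \hookrightarrow \lie V(R) = \g$ induces a graded algebra map $U(\lie R) \hookrightarrow U(\g) \hookrightarrow \hat{U} \twoheadrightarrow U_\Z(V(R))$. Restricting to degree zero, the subspace $(U(\lie R) U(\lie R)_{>p})_0$ is sent into $(U_\Z(V(R)) U_\Z(V(R))_{>p})_0$, so we obtain a well-defined algebra homomorphism $\bar{\alpha} : Z_p(V(R)) \to W_{p, \Z}(V(R))$.

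Second, I would build a candidate inverse $\bar{\beta}$. Since $V(R)$ is generated as a vertex algebra by the image $\psi(R) \subseteq V(R)$, every $a \in V(R)$ is a finite linear combination of monomials $(a^1)_{(n_1)} \cdots (a^k)_{(n_k)} \vac$ with $a^i \in R$ and $n_i < 0$, whence $Y(a, z)$ is an iterated normally ordered product of derivatives of the fields $Y(a^i, z)$. Extracting the coefficient of $z^{-\D_a}$ expresses $a_0 \in \hat{U}$ as an infinite sum of products of modes of elements of $R$. After reducing modulo $\hat{U}\hat{\B}$ and $(U_\Z(V(R)) U_\Z(V(R))_{>p})_0$ only finitely many terms survive, because any factor carrying degree greater than $p$ vanishes on the degree-$p$ piece of every positive-energy module. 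Thus $[a_0] \in W_{p, \Z}(V(R))$ is represented by a finite element of $U(\lie R)_0$ modulo $(U(\lie R) U(\lie R)_{>p})_0$, and I would set $\bar{\beta}([a_0])$ to be this class. By construction $\bar{\alpha} \circ \bar{\beta}$ and $\bar{\beta} \circ \bar{\alpha}$ each act as the identity on generators, giving the desired isomorphism.

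The main obstacle is well-definedness of $\bar{\beta}$, which amounts to showing that two different presentations of the same $a \in V(R)$ as a normally ordered polynomial in the fields $Y(a^i, z)$ for $a^i \in R$ yield the same class in $Z_p(V(R))$. This requires tracking how the defining vertex-algebra relations in $V(R)$ translate, via the $n$-th product identity, into identities among products of modes in $\hat{U}$ modulo the ideal generated by modes of sufficiently large positive degree. Lemma \ref{restrlieR} supplies exactly the needed compatibility: it packages the fact that the $\lie R$-relations, together with the condition that higher modes annihilate the top component, determine the full $V(R)$-module structure, so any ambiguity in the choice of presentation is absorbed into $(U(\lie R) U(\lie R)_{>p})_0$ upon projecting to the quotient.
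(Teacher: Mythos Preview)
Your approach differs from the paper's. You pass through the intermediate algebra $W_{p, \Z}(V(R))$ and attempt to construct explicit mutually inverse maps between it and $Z_p(V(R))$, whereas the paper argues directly with $\zhu_p(V(R))$ using the module-theoretic functors already developed. Specifically, the paper applies the induction functor $M^p$ to the left regular representation of $\zhu_p(V(R))$, obtaining a positive energy $V(R)$-module whose degree-$p$ piece is $\zhu_p(V(R))$; by Lemma~\ref{restrlieR} this is a restricted $\lie R$-module, so $\zhu_p(V(R))$ becomes a $Z_p(V(R))$-module, yielding a homomorphism $Z_p(V(R)) \to \zhu_p(V(R))$. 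Conversely, $U(\lie R)/U(\lie R)U(\lie R)_{>p}$ is itself a restricted $\lie R$-module, hence (again by Lemma~\ref{restrlieR}) a positive energy $V(R)$-module with degree-$p$ piece $Z_p(V(R))$; applying $\Omega_p$ makes $Z_p(V(R))$ a $\zhu_p(V(R))$-module and supplies the inverse map.

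Your route is not wrong, but the step you correctly flag as the obstacle --- well-definedness of $\bar{\beta}$ --- is precisely what the paper's functorial argument absorbs automatically. To make your sketch rigorous you must show that if $[a_0] = [b_0]$ in $W_{p, \Z}(V(R))$ (not merely that $a$ and $b$ are two presentations of the same element of $V(R)$) then the associated classes in $Z_p(V(R))$ agree; unwinding this forces you to observe that both act identically on the regular representation of $Z_p(V(R))$ viewed as a positive energy $V(R)$-module, and that is exactly the paper's second paragraph. The detour through $W_{p, \Z}(V(R))$ and explicit mode expansions therefore buys little: the paper's proof is shorter because it lets the already-established functors $\Omega_p$ and $M^p$, together with Lemma~\ref{restrlieR}, do all the work.
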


\begin{proof}
The idea is the same as for the construction of $W_{p, \G}(V)$. We apply $M^p$ to the left adjoint representation of $\zhu_p(V(R))$, to get a positive energy $V(R)$-module $M$ such that $M_p = \zhu_p(V(R))$. By Lemma \ref{restrlieR}, $M$ is a restricted $\lie R$-module, and thus a $U(\lie R)$-module.

Now, $M_p = \zhu_p(V(R))$ is a $U(\lie R)_0$-module on which $(U(\lie R) U(\lie R)_{>p})_0$ acts by $0$. Therefore $\zhu_p(V(R))$ is a $Z_p(V(R))$-module, and there is an algebra homomorphism $Z_p(V(R)) \rightarrow \zhu_p(V(R))$.

On the other hand, $M = U(\lie R) / U(\lie R) U(\lie R)_{>p}$ is a restricted $\lie R$-module and therefore a $V(R)$-module. If we shift the grading on $M$ to put $U(\lie R)_0$ in degree $p$, then $M$ is a positive energy $V(R)$-module. Since $M_p = Z_p(V(R))$, we have that $Z_p(V(R))$ is a $\zhu_p(V(R))$-module. There is an algebra homomorphism $\zhu_p(V(R)) \rightarrow Z_p(V(R))$.

The homomorphisms we constructed are mutually inverse, hence they are isomorphisms.
\end{proof}

\begin{exmp}
Let $U^k(\hat{\g}) = U(\hat{\g}) / (K = k)$, then
\begin{align*}
\zhu_p(V^k(\g)) \cong U^k(\hat{\g})_0 / (U^k(\hat{\g}) U^k(\hat{\g})_{>p})_0.
\end{align*}
When $p = 0$ we can simplify the right hand side to $U^k((\hat{\g})_0) \cong U(\g)$. The isomorphism $\zhu(V^k(\g)) \cong U(\g)$ was proved in \cite{FZ} by a different method.

In the same way we obtain $\zhu_0(\vir^c) \cong \C[x]$. Again this was proved in \cite{FZ}.

From the PBW theorem, we see that $\zhu_1(\vir^c)$ is spanned by the monomials $L_0^k$ and $L_{-1} L_0^k L_1$ for $k \in \Z_+$. Indeed, this algebra is generated by the elements $L = L_0$ and $A = L_{-1} L_1$. We have
\begin{align*}
AL = L_{-1} L_1 L_0 = L_{-1} L_0 L_1 + L_{-1} L_1 = L_0 L_{-1} L_1 - L_{-1} L_1 + L_{-1} L_1 = L_0 L_{-1} L_1 = LA.
\end{align*}
Hence $\zhu_1(\vir^c)$ is commutative, hence a quotient of $\C[A, L]$. We also have
\begin{align*}
A^2 = L_{-1} L_1 L_{-1} L_1 = L_{-1} L_{-1} L_1 L_1 + 2 L_{-1} L_0 L_1 = 2 L_0 L_{-1} L_1 + 2 L_{-1} L_1 = 2LA + 2A.
\end{align*}
Therefore
\[
\zhu_1(\vir^c) \cong \C[A, L] / \left(A^2 - 2LA - 2A\right)
\]
The higher level Zhu algebras of $\vir^c$ may be computed explicitly in a similar way, but are not commutative in general.
\end{exmp}

We may introduce a topology on $(U(\lie R))_0$ by declaring $\{(U(\lie R)U(\lie R)_{>r})_0\}_{r \geq 0}$ to be a fundamental system of neighborhoods of $0$. We have
\begin{align*}
\varprojlim \zhu_p(V(R)) \cong \widehat{(U(\lie R))_0},
\end{align*}
the completion with respect to this topology.

\subsection{Results for rational vertex algebras}

In \cite{FZ}, Frenkel and Zhu noted the following: If $V$ is a vertex algebra and $I \subseteq V$ is an ideal, then
\[
\zhu(V / I) \cong \zhu(V) / \zhu(I),
\]
where $\zhu(I)$ denotes the image of $I$ in $\zhu(V)$ under the canonical map $V \rightarrow \zhu(V)$. The same is true for $\zhu_p(V/I)$, and the proof is the same.

Consider the simple vertex algebra $V_k(\g) = V^k(\g) / I$, where $k \in \Z_+$, and $I$ is the unique maximal ideal. It is known that $I$ is generated (under left multiplication) by the element $(e_\theta)^{k+1}_{(-1)}\vac$, where $e_\theta$ is the highest root vector in $\g$. Also, it is known that $V_k(\g)$ is rational. It was shown in \cite{FZ} that $\zhu_0(I) = (e_{\theta}^{k+1}) \subseteq U(\g) = \zhu_0(V^k(\g))$, and so
\[
\zhu_0(V_k(\g)) \cong U(\g) / (e_\theta^{k+1}).
\]
For specific choices of $\g$ and $k$ this quotient can be computed explicitly. For example if $\g = \mathfrak{sl}_2$ and $k = 1$, one may compute
\begin{align*}
\zhu_0(V_1(\mathfrak{sl}_2)) \cong \mat_2(\C) \oplus \C.
\end{align*}
This algebra has the irreducible representations $\C$ and $\C^2$ which correspond to the degree zero pieces of the two integrable $\hat{\mathfrak{sl}_2}$-modules at level $1$, as one expects (see \cite{IDLA}).

In the same way $\zhu_p(V_k(\g)) = \zhu_p(V^k(\g)) / \zhu_p(I)$. Unfortunately, it seems to be much more difficult to compute $\zhu_p(I)$ explicitly when $p > 0$.

Let $k \in \Z_+$. The normalized positive energy $V_k(\g)$-modules $\{M^1, M^2, \ldots M^r\}$ are exactly the integrable $\hat{\g}$-modules. Since $V_k(\g)$ is rational, $\zhu_p(V_k(\g))$ is finite-dimensional and semisimple for all $p \in \Z_+$. The graded pieces $M^i_n$, where $1 \leq i \leq r$ and $0 \leq n \leq p$, form a complete list of irreducible $\zhu_p(V_k(\g))$-modules. None of the modules $M^i_n$ are duplicates, because $M^i_{n_1}$ and $M^i_{n_2}$ have different eigenvalues of $L_0$ for $n_1 \neq n_2$, and
\[
M^{i_1}_{n_1} \cong M^{i_2}_{n_2} \Rightarrow M^{i_1} \cong M^{i_2} \quad \text{up to degree shifts}.
\]

Therefore, if $d^i_n = \dim M^i_n$,
\begin{align*}
\zhu_p(V_k(\g)) \cong \bigoplus_{\substack{1 \leq i \leq r  \\ 0 \leq n \leq p}} \mat_{d^i_n} \C
\quad \text{and} \quad
\phi_p : \bigoplus_{\substack{1 \leq i \leq r  \\ 0 \leq n \leq p}} \mat_{d^i_n} \C \rightarrow \bigoplus_{\substack{1 \leq i \leq r  \\ 0 \leq n \leq p-1}} \mat_{d^i_n} \C
\end{align*}
is the natural projection. The inverse limit is now just the direct product
\begin{align*}
\varprojlim \zhu_p(V_k(\g)) \cong \prod_{\substack{1 \leq i \leq r  \\ n \in \Z_+}} \mat_{d^i_n} \C.
\end{align*}
These comments apply to any rational vertex algebra with a Virasoro element, and for which we know the graded dimensions of its irreducible modules.


\renewcommand{\theequation}{\Alph{section}.\arabic{equation}}

\renewcommand{\thethm}{\Alph{section}.\arabic{thm}}
\renewcommand{\thedefn}{\Alph{section}.\arabic{defn}}
\renewcommand{\theexmp}{\Alph{section}.\arabic{exmp}}
\renewcommand{\thermrk}{\Alph{section}.\arabic{rmrk}}

\appendix
\section{\texorpdfstring{Proof of equation (\ref{particular})}{Proof of equation (4.4)}} \label{appcalc}

In this section we continue in the set-up of section \ref{inducfunct}, i.e., $\tilde{M}$ is the $\lie V$-module defined there, $x \in N$, and $a, b \in V$ such that $[\eps_a] + [\eps_b] = 0$. To prove equation (\ref{particular}), it suffices to prove
\begin{align} \label{reducedpart}
\langle \psi, (a_{[-1]}b)^M_0 x - \sum_{j \in \Z_+} a^M_{p+\eps_a-j} b^M_{-p-\eps_a+j} x \rangle = 0.
\end{align}
Equivalently
\begin{align} \label{BIpp2}
\left< \psi, (a_{[-1]}b)^M_0 x \right>
= \sum_{k=0}^{p-\chi(a, b)} \left< \psi, a^M_{-k+\eps_a} b^M_{k-\eps_a} x \right> + \sum_{k=1}^p \left< \psi, a^M_{k+\eps_a} b^M_{-k-\eps_a} x \right>,
\end{align}
because $b^M_n x = 0$ when $n > p$.

Evaluating $\left<\psi, a^M_{-k+\eps_a} b^M_{k-\eps_a} x\right>$ for $k-\eps_a > 0$ involves a choice of $c \in V$ such that $a_{-k+\eps_a} b_{k-\eps_a} = c_0$. We cannot evaluate $\left< \psi, a^M_{k+\eps_a} b^M_{-k-\eps_a} x \right>$, where $k+\eps_a > 0$, in the same way, because the monomial is not ordered correctly. We must first use the commutation relations of $\lie V$ to rewrite $a^M_{k+\eps_a} b^M_{-k-\eps_a}$ in terms of our PBW basis.

The following lemma gives us a choice of $c$ to use henceforth.
\begin{lemma} \label{akbk}
Let $V$ be a $\G/\Z$-graded vertex algebra, and $M$ a $(\G, H)$-twisted $V$-module. Also let $x \in M_p$, let $a, b \in V$ such that $[\eps_a] + [\eps_b] = [0]$, and let $k$ be an integer such that $0 \leq k \leq p + \eps_a$. We have
\begin{align} \label{akbklem}
a^M_{-k+\eps_a} b^M_{k-\eps_a} x = \sum_{m=0}^{p-k} \binom{-p-1-k}{m} (a_{[-p-1-k-m]}b)^M_0 x.
\end{align}
\end{lemma}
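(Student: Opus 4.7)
The plan is to specialize equation (\ref{biggerp}) to a family of identities indexed by $k$, and then invert the resulting upper-triangular linear system by a standard binomial identity.

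First, I would specialize equation (\ref{biggerp}) by setting $n = -p-1-k$, where $k$ is the integer in the statement. Since $p+1+n-j+\eps_a = -k+\eps_a-j$ and $-p-1-n+j-\eps_a = k-\eps_a+j$, this yields the key identity
\begin{equation*}
(a_{[-p-1-k]}b)^M_0\, x \;=\; \sum_{j \in \Z_+} (-1)^j \binom{-p-1-k}{j}\, a^M_{-k+\eps_a-j}\, b^M_{k-\eps_a+j}\, x,
\end{equation*}
valid for every integer $k \geq 0$. The sum is automatically finite, since $b^M_s x = 0$ once $s > p$.

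Next, set $F(k) = a^M_{-k+\eps_a}\, b^M_{k-\eps_a}\, x$. As $k$ varies, the displayed identities constitute an upper-triangular linear system expressing each $(a_{[-p-1-k]}b)^M_0 x$ in terms of the $F(k+j)$. I would invert this system by multiplying the identity for $k+m$ by $\binom{-p-1-k}{m}$ and summing over $m \in \Z_+$. After re-indexing with $\ell = m+j$, the coefficient of $F(k+\ell)$ becomes
\begin{equation*}
\sum_{m=0}^{\ell} (-1)^{\ell-m} \binom{-p-1-k}{m} \binom{-p-1-k-m}{\ell-m} \;=\; \binom{-p-1-k}{\ell} \sum_{m=0}^{\ell} (-1)^{\ell-m} \binom{\ell}{m},
\end{equation*}
using the elementary identity $\binom{N}{m}\binom{N-m}{\ell-m} = \binom{N}{\ell}\binom{\ell}{m}$ (valid for arbitrary $N$, by comparing falling factorials). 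The inner sum equals $(1-1)^\ell = \delta_{\ell,0}$, so only $\ell = 0$ survives, giving
\begin{equation*}
F(k) \;=\; \sum_{m \in \Z_+} \binom{-p-1-k}{m}\, (a_{[-p-1-k-m]}b)^M_0\, x.
\end{equation*}

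Finally, I would truncate the sum at $m = p-k$: for $m \geq p-k+1$ one has $-p-1-k-m \leq -2p-2 \leq -2p-2+\chi(a,b)$, so equation (\ref{minus2ann}) forces $(a_{[-p-1-k-m]}b)^M_0 x = 0$, and those terms drop out. The remaining sum is exactly the right-hand side of the lemma. The only real obstacle is the binomial bookkeeping; once the specialization of (\ref{biggerp}) is in hand, both the inversion and the truncation are routine.
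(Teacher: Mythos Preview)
Your proof is correct and essentially matches the paper's: both substitute equation (\ref{biggerp}) and reduce to the same double sum $\sum_{m+j=\ell}(-1)^j\binom{-p-1-k}{m}\binom{-p-1-k-m}{j}$, which collapses to $\delta_{\ell,0}$. The paper evaluates this coefficient via generating functions (upper negation and Vandermonde) and works directly with the finite sum, whereas you use the trinomial-revision identity and then truncate via (\ref{minus2ann}); the organization differs slightly but the substance is the same.
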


\begin{proof}
We substitute equation (\ref{biggerp}) into the right hand side of equation (\ref{akbklem}), and remove terms of the form $a^M_n x$ for $n > p$, to obtain
\begin{align*}
\sum_{\substack{m, j \in \Z_+, \\ 0 \leq j+m \leq p-k}} (-1)^j \binom{-p-1-k}{m} \binom{-p-1-k-m}{j} a^M_{-k-m-j+\eps_a} b^M_{k+m+j-\eps_a} x.
\end{align*}
For an integer $\alpha$ such that $0 \leq \alpha \leq p-k$, the coefficient of $a^M_{-k-\alpha+\eps_a} b^M_{k+\alpha-\eps_a} x$ above is
\begin{align*}
\sum_{j=0}^{\alpha} (-1)^j \binom{-p-1-k}{\alpha-j} \binom{-p-1-k-\alpha+j}{j}
& = \sum_{j=0}^{\alpha} \binom{-p-1-k}{\alpha-j} \binom{p+k+\alpha}{j} \\
& = [\xi^{\alpha}]: (1+\xi)^{-p-1-k} (1+\xi)^{p+k+\alpha} \\
& = [\xi^{\alpha}]: (1+\xi)^{\alpha-1}.
\end{align*}
This is $1$ if $\alpha=0$ and $0$ if $\alpha>0$. This proves the lemma.
\end{proof}

We now have,
\begin{align} \label{akbkeqn}
\left<\psi, a^M_{-k+\eps_a} b^M_{k-\eps_a} x \right> = \sum_{m=0}^{p-k} \binom{-p-1-k}{m} \left<\psi, (a_{[-p-1-k-m]}b)^M_0 x \right>.
\end{align}
If $\eps_a = k = 0$ this equation follows from the fact that $N$ is a $\zhu_{p, \G}(V)$-module; for other values of $\eps_a$ and $k$, it follows from Lemma \ref{akbk} and Definition \ref{pairingdef}. For brevity let us omit $\left< \psi, \cdot \right>$ from now on.

Armed with equation (\ref{akbkeqn}), we rewrite the first summation on the right hand side of equation (\ref{BIpp2}) as
\begin{align} \label{squa}
\sum_{k=0}^{p-\chi(a, b)} \sum_{m=0}^{p-k} \binom{-p-1-k}{m} (a_{[-p-1-k-m]}b)^M_0 x
= \sum_{\alpha=0}^{p-\chi(a, b)} \binom{-p}{\alpha} (a_{[-p-1-\alpha]}b)^M_0 x.
\end{align}
To see this equality we first suppose $\chi(a, b) = 0$. For an integer $\al$ such that $0 \leq \al \leq p$, the coefficient of $(a_{[-p-1-\alpha]}b)^M_0$ is
\begin{align} \label{q}
\begin{split}
\sum_{k=0}^{\alpha} \binom{-p-1-k}{\alpha-k}
& = [\xi^{\alpha}]: (1+\xi)^{-p-1} \sum_{k=0}^{\infty} \xi^k (1+\xi)^{-k} \\
& = [\xi^{\alpha}]: (1+\xi)^{-p}
= \binom{-p}{\alpha}
\end{split}
\end{align}
(using the geometric series formula), so equation (\ref{squa}) is true. When $\chi(a, b) = 1$, equation (\ref{q}) is valid for $0 \leq \alpha \leq p-1$, but not $\alpha = p$. However, $N$ is a $\zhu_{p, \G}(V)$-module, so $(a_{[-2p-2+\chi(a, b)]}b)^M_0 x = (a_{[-2p-1]}b)^M_0 x = 0$ for $x \in N$. Therefore (\ref{squa}) holds if $\chi(a, b) = 1$ too.

Substituting equation (\ref{asndef}) into the right hand side of equation (\ref{squa}) reduces it to
\begin{align}
\sum_{\alpha=0}^{p-\chi(a, b)} & \binom{-p}{\alpha} \left[ \sum_{i \in \Z_+} \binom{\ga_a+p}{i} (a_{(-p-1-\alpha+i)}b)^M_0 \right] x \nonumber \\
= {} & \sum_{j \in \Z} \left[ \sum_{\alpha=0}^{p-\chi(a, b)} \binom{-p}{\alpha} \binom{\ga_a+p}{p+1+j+\alpha} \right] (a_{(j)}b)^M_0 x. \label{firsthalf}
\end{align}

Now we turn our attention to the second sum on the right hand side of equation (\ref{BIpp2}). First apply equation (\ref{conbrak}) to get
\begin{align} \label{1stofsecpart}
\sum_{k=1}^p a^M_{k+\eps_a} b^M_{-k-\eps_a} x
= p(a, b) \sum_{k=1}^p b^M_{-k-\eps_a} a^M_{k+\eps_a} x
+ \sum_{k=1}^p \sum_{j \in \Z_+} \binom{k + \ga_a - 1}{j} (a_{(j)}b)^M_0 x.
\end{align}

We put $l = k-\chi(a, b)$ and use $\eps_a = -\eps_b-\chi(a, b)$ to write the first summation on the right hand side of (\ref{1stofsecpart}) as
\begin{align*}
p(a, b) \sum_{l=1-\chi(a, b)}^{p-\chi(a, b)} b^M_{-l+\eps_b} a^M_{l-\eps_b} x.
\end{align*}
We now repeat calculation (\ref{squa})-(\ref{q}) to reduce this summation to
\begin{align}
p(a, b) \sum_{\alpha=0}^{p-\chi(a, b)} & \left[ \binom{-p}{\alpha} - \delta_{0, \chi(a, b)} \binom{-p-1}{\alpha} \right] (b_{[-p-1-\alpha]}a)^M_0 x \nonumber \\
= {} & p(a, b) \sum_{\alpha=0}^{p-\chi(a, b)} \binom{-p-1+\chi(a, b)}{\alpha-1+\chi(a, b)} (b_{[-p-1-\alpha]}a)^M_0 x. \label{inter2}
\end{align}

Equation (\ref{swapduple}), together with the fact that $N$ is a $\zhu_{p, \G}(V)$-module, implies
\begin{align*}
(b_{[-p-1-\alpha]}a)_0 x = p(a, b) (-1)^{p-\alpha} \sum_{j \in \Z} \binom{\alpha+\ga_a-1+\chi(a, b)}{j+p+1+\alpha} (a_{(j)}b)^M_0 x.
\end{align*}
Plugging this into equation (\ref{inter2}) yields
\begin{align} \label{secondhalf}
\sum_{j \in \Z} \left[ \sum_{\alpha=0}^{p-\chi(a, b)} (-1)^{p+\alpha} \binom{-p-1+\chi(a, b)}{\alpha-1+\chi(a, b)} \binom{\alpha+\ga_a-1+\chi(a, b)}{p+1+j+\alpha} \right] (a_{(j)}b)^M_0 x.
\end{align}

Combining (\ref{firsthalf}) with (\ref{secondhalf}) renders the right hand side of equation (\ref{BIpp2}) equal to
\begin{align*}
\sum_{k=1}^p \sum_{j \in \Z} & \binom{k + \ga_a - 1}{j} (a_{(j)}b)^M_0 x
+ \sum_{j \in \Z} \sum_{\alpha=0}^{p-\chi(a, b)} \left[ \binom{-p}{\alpha} \binom{\ga_a+p}{p+1+j+\alpha} \right. \\
& + \left. (-1)^{p+\alpha} \binom{-p-1+\chi(a, b)}{\alpha-1+\chi(a, b)} \binom{\alpha+\ga_a-1+\chi(a, b)}{p+1+j+\alpha} \right] (a_{(j)}b)^M_0 x.
\end{align*}
The left hand side of (\ref{BIpp2}) is
\[
\sum_{j \in \Z} \binom{\ga_a+p}{j+1} (a_{(j)}b)^M_0 x.
\]

Before proving the equality of these two expressions in general, we dispense with the special case $p = 0$, $\chi(a, b) = 1$. In this case the left hand side vanishes because $N$ is a $\zhu_{0, \G}(V)$-module and the summation of the right hand side is over an empty range. We exclude this case from now on.

In the other cases we prove equality for the coefficient of $(a_{(j)}b)^M_0$ and thus for the whole expression.

We claim that
\begin{align} \label{shortlem}
\sum_{k=1}^p \binom{k+\ga-1}{j} = \binom{\ga+p}{j+1} - \binom{\ga}{j+1}.
\end{align}
Indeed the left hand side of (\ref{shortlem}) is
\begin{align*}
[\xi^j]: (1+\xi)^{\ga} \sum_{k=0}^{p-1} (1+\xi)^{k}
&= [\xi^j]: (1+\xi)^{\ga} \frac{1 - (1+\xi)^p}{1 - (1+\xi)} \\
&= [\xi^j]: \frac{(1+\xi)^{\ga} - (1+\xi)^{p+\ga}}{-\xi},
\end{align*}
which equals the right hand side.

It therefore remains to prove that
\begin{align}
\begin{split}
\binom{\ga_a}{j+1}
= {} & \sum_{k=0}^{p-\chi(a, b)} \left[ \binom{-p}{k} \binom{\ga+p}{p+j+k+1} \right. \\
& + \left. (-1)^{p+k} \binom{-p-1+\chi(a, b)}{k-1+\chi(a, b)} \binom{k+\ga-1+\chi(a, b)}{p+1+j+k} \right]
\end{split} \label{Sequals}
\end{align}
for $p, j \in \Z_+$, $\ga$ arbitrary and $(p, \chi) \neq (0, 1)$. Equation (\ref{Sequals}) is a special case of Lemma \ref{bigcomblem} below (with $(\ga, n, X, Y) = (\ga_a, j+1, p, p+1-\chi(a, b))$).

\begin{lemma} \label{bigcomblem}
For $\ga \in \R$, $n \in \Z$, and $X, Y \in \Z$ with $X \geq 0$, $Y \geq 1$, let
\begin{align*}
H_{\ga, n}(X, Y) &= \sum_{k=0}^{Y-1} \binom{-X}{k} \binom{\ga+X}{n+X+k} \\
\text{and} \quad
D_{\ga, n}(X, Y) &= \sum_{k=0}^{X-1} (-1)^{Y+k} \binom{-Y}{k} \binom{\ga+k}{n+Y+k}.
\end{align*}
Then
\begin{align*}
H_{\ga, n}(X, Y) + D_{\ga, n}(X, Y) = \binom{\ga}{n}.
\end{align*}
\end{lemma}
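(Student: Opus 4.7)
The plan is to prove the identity by induction on $X$, holding $Y \geq 1$, $\gamma$, and $n$ fixed. For the base case $X=0$, the sum defining $D_{\gamma,n}(0,Y)$ is empty, and $H_{\gamma,n}(0,Y) = \binom{\gamma}{n}$ because $\binom{0}{k}$ vanishes for $k \geq 1$, so only the $k=0$ term survives.

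For the inductive step I would show that the forward difference $\Delta_X[H + D]$ vanishes. The easy half is immediate: since $D_{\gamma,n}(X,Y)$ is a partial sum up to $k = X-1$, one has $\Delta_X D(X,Y) = (-1)^{Y+X}\binom{-Y}{X}\binom{\gamma+X}{n+Y+X}$, namely the single new term. The harder half is the computation of $\Delta_X H(X,Y)$. The key step will be to apply Pascal's rule twice in each summand of $\Delta_X H$: once as $\binom{\gamma+X+1}{n+X+1+k} = \binom{\gamma+X}{n+X+k} + \binom{\gamma+X}{n+X+k+1}$ and once as $\binom{-X-1}{k} = \binom{-X}{k} - \binom{-X-1}{k-1}$, and then to regroup the resulting four cross-terms into the telescoping form
\begin{align*}
\binom{-X-1}{k}\binom{\gamma+X}{n+X+k+1} - \binom{-X-1}{k-1}\binom{\gamma+X}{n+X+k} = B_k - B_{k-1},
\end{align*}
with $B_k := \binom{-X-1}{k}\binom{\gamma+X}{n+X+k+1}$ and the convention $B_{-1}=0$. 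Summing over $0 \leq k \leq Y-1$ then collapses $\Delta_X H(X,Y)$ to $B_{Y-1} = \binom{-X-1}{Y-1}\binom{\gamma+X}{n+X+Y}$.

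Combining the two computations, the inductive step reduces to the purely combinatorial claim
\begin{align*}
\binom{-X-1}{Y-1} + (-1)^{Y+X}\binom{-Y}{X} = 0.
\end{align*}
I would verify this by expanding each negative binomial coefficient via $\binom{-m}{j} = (-1)^j\binom{m+j-1}{j}$ and using the symmetry $\binom{X+Y-1}{Y-1} = \binom{X+Y-1}{X}$; both terms reduce to $\pm\binom{X+Y-1}{X}$ with opposite signs, giving the cancellation.

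The only nontrivial obstacle I anticipate is spotting the correct regrouping of the four cross-terms produced by the double application of Pascal's rule; once the telescoping structure is recognized everything else is routine binomial bookkeeping, and no part of the argument uses the vertex-algebraic setting that motivates the lemma.
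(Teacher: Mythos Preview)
Your proposal is correct and follows essentially the same approach as the paper: induction on $X$ with the base case $X=0$, and for the inductive step a Pascal-rule manipulation of $H_{\gamma,n}(X+1,Y)$ that produces the single boundary term $\binom{-X-1}{Y-1}\binom{\gamma+X}{n+X+Y}$, which exactly cancels the new summand in $D_{\gamma,n}(X+1,Y)$. Your explicit telescoping via $B_k - B_{k-1}$ and your verification of $\binom{-X-1}{Y-1} = -(-1)^{X+Y}\binom{-Y}{X}$ make the argument slightly more self-contained than the paper's version, but the substance is the same.
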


\begin{proof}
Fix $\ga$ and $n$, for a pair $(X, Y)$ call the claim $P(X, Y)$. For $Y \geq 1$, $P(0, Y)$ is clear. Suppose $P(X, Y)$ holds for some $(X, Y)$, we will deduce $P(X+1, Y)$, the claim follows by induction.

Using $\binom{\alpha+1}{s+1} = \binom{\alpha}{s+1} + \binom{\alpha}{s}$ we have
\begin{align*}
H_{\ga, n}(X+1, Y)
= {} & \sum_{k=0}^{Y-1} \binom{-X-1}{k} \binom{\ga+X}{n+X+1+k} + \sum_{k=0}^{Y-1} \binom{-X-1}{k} \binom{\ga+X}{n+X+k} \\
= {} & \binom{-X-1}{Y-1} \binom{\ga+X}{n+X+Y+1} \\
& + \sum_{k=0}^{Y-1} \left[ \binom{-X-1}{k-1} + \binom{-X-1}{k} \right] \binom{\ga+X}{n+X+k} \\
= {} & (-1)^{X+Y+1} \binom{-Y}{X} \binom{\ga+X}{n+X+Y} + H_{\ga, n}(X, Y).
\end{align*}
Therefore
\begin{align*}
H_{\ga, n}(X, Y) + D_{\ga, n}(X, Y)
= {} & H_{\ga, n}(X+1, Y) + D_{\ga, n}(X, Y) \\
& + (-1)^{X+Y} \binom{-Y}{X} \binom{\ga+X}{n+X+Y} \\
= {} & H_{\ga, n}(X+1, Y) + D_{\ga, n}(X+1, Y).
\end{align*}
\end{proof}


\section{Extension to non-integer level} \label{pnotinZ}

In several places we promised to describe the construction of $\zhu_{p, \G, \hbar}(V)$ for $p \notin \Z_+$. In particular we require them for Theorem \ref{rati}. We have seen that in an indecomposable $(\G, H)$-twisted positive energy $V$-module, the degrees of the graded pieces lie in a single coset of $\overline{\G}$. Thus it makes sense to define Zhu algebras for all $p \in \overline{\G} \cap \R_+$. However, it is more convenient for us to adopt a system in which $\zhu_{p, \G}(V)$ is defined for arbitrary $p \in \R_+$, and such that if $q < p$ and $[q, p] \cap \overline{\G}$ is empty then the level $p$ and level $q$ Zhu algebras are isomorphic. Such a system allows us to state that `for every twisted positive energy $V$-module $M$, $M_p$ is a $\zhu_{p, \G}(V)$-module' without qualification. In this appendix we will use a slightly changed notation, letting $P$ denote the level of the Zhu algebra and reserving $p$ to stand for the integer part $\fp$.

\subsection{New notations} \label{newnot}

Let $a, b \in V$ such that $[\eps_a] + [\eps_b] = [0]$, and let $x \in M_P$. We repeat the calculation of section \ref{motivation}, with $p+1+\eps_a$ replaced by $P_a$, which we define to be the smallest element of $[\eps_a]$ that is strictly greater than $P$.

Equation (\ref{biggerp}) becomes
\begin{align}
\sum_{j \in \Z_+} \binom{P_a+\D_a-1}{j}(a_{(n+j)}b)^M_0 x
= \sum_{j \in \Z_+} (-1)^j \binom{n}{j} a^M_{P_a+n-j} b^M_{-P_a-n+j} x. \label{biggerpprime}
\end{align}
The right hand side equals zero when $n < -P-P_a$, so we define $N_a = N([\eps_a], P)$ to be the largest integer strictly less than $-P-P_a$.

The modified state-field correspondence should now be
\begin{align*}
Z(a, z) = (1 + \hbar z)^{\xi_a} Y(a, z) = \sum_{n \in \Z} a_{[n]} z^{-n-1},
\end{align*}
where $\xi_a = P_a + \D_a - 1$ is the largest element of $[\ga_a]$ that does not exceed $P + \D_a$. We have
\begin{align*}
a_{[n]}b = \sum_{j \in \Z_+} \binom{\xi_a}{j} \hbar^j a_{(n+j)}b.
\end{align*}

In the case that $\eps_a = \eps_b = 0$, we have simpler expressions for these quantities:
\begin{align} \label{epsez}
P_a = \fp + 1, \quad \xi_a = \fp + \D_a \quad \text{and} \quad N_a = -2\fp-2.
\end{align}

Writing $p = \fp$ makes many formulas from the rest of the paper remain true verbatim. For example, for $a, b \in V_\G$ we define $a *_P b$ to be
\begin{align}
a *_P b = \sum_{m=0}^p \binom{-p-1}{m} \hbar^{-p-m} a_{[-p-1-m]}b,
\end{align}
when we set $\hbar = 1$ we find $(a *_P b)^M_0 = a^M_0 b^M_0$ on $M_P$.

The subspace $J_{P, \G, \hbar}$ is now defined to be
\begin{align*}
J_{P, \G, \hbar} = \C[\hbar, \hbar^{-1}] \{(T + \hbar H)a | [\eps_a] = 0\} + \C[\hbar, \hbar^{-1}] \{a_{[N_a]}b | [\eps_a] + [\eps_b] = [0]\}.
\end{align*}

\begin{rmrk}
Although the product $*_P$ depends only on $p = \fp$, the algebra $\zhu_{P, \G}(V)$ is not necessarily isomorphic to $\zhu_{p, \G}(V)$ because $J_{P, \G}$ and $J_{p, \G}$ may differ. For example let $V$ be a graded vertex algebra for which $\overline{\G} = \frac{1}{2}\Z$.
\begin{itemize}
\item $P = 0$: If $\eps_a = 0$ then $P_a = 1$, and so $N_a = -2$. If $\eps_a = -\frac{1}{2}$ then $P_a = \frac{1}{2}$, and so $N_a = -1$.

\item $P = \frac{1}{2}$: If $\eps_a = 0$ then $P_a = 1$ and so $N_a = -2$. If $\eps_a = -\frac{1}{2}$ then $P_a = \frac{3}{2}$, and so $N_a = -3$.
\end{itemize}
Thus $J_{1/2, \G}$ is strictly smaller than $J_{0, \G}$.
\end{rmrk}

The inclusion
\begin{align}
((T + \hbar H)V_\G) *_P V_\G \subseteq J_{P, \G, \hbar}
\end{align}
is proved just as before.

\subsection{The Borcherds identity}

The derivation of the modified Borcherds identity follows the same course as the $P \in \Z_+$ case. It is made more complicated by the piecewise nature of the functions $P_a$, $N_a$, and $\xi_a$. Just as we introduced a function $\chi(a, b)$ at the end of section \ref{prelim} that related $\eps_{a_{(n)}b}$ to $\eps_a$ and $\eps_b$, so now we introduce $\sigma(a, b)$ relating $\xi_{a_{(n)}b}$ to $\xi_a$ and $\xi_b$.
\begin{defn}
\begin{align}
\sigma(a, b) = \xi_{a_{(-1)}b} + p - \xi_a - \xi_b.
\end{align}
\end{defn}

We have
\begin{align}
\xi_{a_{(n)}b} = \xi_a + \xi_b - p - n - 1 + \sigma(a, b).
\end{align}

In Lemma \ref{brakderivlemma} we used the fact that $\chi(a, b) = 0$ whenever $\eps_a = 0$. For the general case we need the same fact for $\sigma$.
\begin{lemma} \label{ifaisgood}
If $\eps_a = 0$, then $\sigma(a, b) = 0$ for all $b \in V$.
\end{lemma}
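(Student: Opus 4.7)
The plan is to unpack the definitions of $\xi_a$, $\xi_b$, and $\xi_{a_{(-1)}b}$ under the assumption $\eps_a = 0$, and then substitute directly into $\sigma(a,b) = \xi_{a_{(-1)}b} + p - \xi_a - \xi_b$ (with $p = \fp$). The only nontrivial ingredient is identifying the coset $[\eps_{a_{(-1)}b}]$, which is handled by the $\chi$-lemma at the end of section \ref{prelim}.

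First I would apply equation (\ref{epsez}): since $\eps_a = 0$, we have $P_a = p+1$ and $\xi_a = p + \D_a$. Next I would observe that $\chi(a,b) = 0$ whenever $\eps_a = 0$ (because $\eps_b > -1$ forces $\eps_a + \eps_b > -1$), so by the $\eps$-additivity lemma from section \ref{prelim},
\begin{equation*}
\eps_{a_{(-1)}b} = \eps_a + \eps_b + \chi(a,b) = \eps_b.
\end{equation*}
Consequently $[\eps_{a_{(-1)}b}] = [\eps_b]$, and hence $P_{a_{(-1)}b}$, being the smallest element of $[\eps_b]$ strictly greater than $P$, equals $P_b$.

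Then I would use $\D_{a_{(-1)}b} = \D_a + \D_b$ (equation (\ref{deltaprops}) with $n=-1$) to compute
\begin{equation*}
\xi_{a_{(-1)}b} = P_{a_{(-1)}b} + \D_{a_{(-1)}b} - 1 = P_b + \D_a + \D_b - 1 = \D_a + \xi_b.
\end{equation*}
Plugging into the definition of $\sigma$ yields
\begin{equation*}
\sigma(a,b) = (\D_a + \xi_b) + p - (p + \D_a) - \xi_b = 0,
\end{equation*}
as required. There is no real obstacle here: the piecewise nature of $P_a$ and $\xi_a$ makes the general $\sigma$ awkward, but the assumption $\eps_a = 0$ simultaneously pins down $\xi_a$ and (via $\chi(a,b) = 0$) forces $P_{a_{(-1)}b}$ to align with $P_b$, so the shifts cancel exactly.
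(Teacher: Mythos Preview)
Your proof is correct and follows essentially the same route as the paper: both arguments establish $\xi_{a_{(-1)}b} = \D_a + \xi_b$ and then substitute into the definition of $\sigma$. The only cosmetic difference is bookkeeping---the paper reads off $\xi_{a_{(-1)}b}$ directly from the description ``largest element of $[\ga_{a_{(-1)}b}]$ not exceeding $P+\D_{a_{(-1)}b}$'' using $[\ga_a]=[\D_a]$, whereas you reach the same conclusion via $\eps_{a_{(-1)}b}=\eps_b$ (from $\chi(a,b)=0$) and the formula $\xi_c = P_c + \D_c - 1$.
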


\begin{proof}
$\xi_{a_{(-1)}b}$ is the largest element of $[\ga_{a_{(-1)}b}] = [\ga_a] + [\ga_b] = [\D_a] + [\ga_b]$ that does not exceed $P + \D_{a_{(-1)}b} = P + \D_a + \D_b$. But this is just $\D_a$ plus the largest element of $[\ga_b]$ that does not exceed $P + \D_b$, viz. $\D_a + \xi_b$. Now, $\eps_a = 0$ implies $\xi_a = p + \D_a$, so we have $\xi_{a_{(-1)}b} = \xi_a + \xi_b - p$. Therefore $\sigma(a, b) = 0$.
\end{proof}

\begin{lemma} \label{coolindex}
If $[\eps_a] + [\eps_b] = [0]$, then
\begin{align}
N_a = -2p - 2 + \sigma(a, b).
\end{align}
\end{lemma}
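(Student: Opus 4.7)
The plan is to reduce both sides of the claimed identity to expressions in the single quantity $P_a + P_b$, which is an integer by the hypothesis $[\eps_a] + [\eps_b] = [0]$. I expect to show separately that $\sigma(a, b) = 2p + 2 - (P_a + P_b)$ and $N_a = -(P_a + P_b)$, after which the lemma is immediate.

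First, to compute $\sigma(a, b)$, I would unwind the definition of $\xi_{a_{(-1)}b}$ as the largest element of $[\ga_{a_{(-1)}b}]$ not exceeding $P + \D_{a_{(-1)}b} = P + \D_a + \D_b$. Under the hypothesis,
\[
[\ga_{a_{(-1)}b}] = [\ga_a] + [\ga_b] = \D_a + \D_b + ([\eps_a] + [\eps_b]) = \D_a + \D_b + \Z,
\]
so the required largest element is simply $\fp + \D_a + \D_b = p + \D_a + \D_b$. Substituting this together with $\xi_c = P_c + \D_c - 1$ into $\sigma(a, b) = \xi_{a_{(-1)}b} + p - \xi_a - \xi_b$ yields immediately $\sigma(a, b) = 2p + 2 - (P_a + P_b)$.

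Next, I would identify $N_a$ as $-(P_a + P_b)$. The candidate $-(P_a + P_b)$ is an integer, since $[\eps_a] + [\eps_b] = [0]$ forces $P_a + P_b \in \Z$. To confirm it is the largest integer strictly less than $-P - P_a$, I would verify the two defining inequalities: $-(P_a + P_b) < -P - P_a$ is equivalent to $P_b > P$, which holds by the definition of $P_b$; and $-(P_a + P_b) + 1 \geq -P - P_a$ is equivalent to $P_b \leq P + 1$, which holds because $P_b - 1$ lies in $[\eps_b]$ and, by minimality of $P_b$, cannot exceed $P$. Combining the two computations gives $N_a = -(P_a + P_b) = -2p - 2 + \sigma(a, b)$. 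The only mild subtlety I anticipate is keeping careful track of the piecewise definitions of $P_c$, $\xi_c$, and $N_a$ in this general (non-integer $P$) setting; once these are correctly unpacked, there is essentially no calculation left.
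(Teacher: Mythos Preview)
Your proof is correct and is noticeably cleaner than the paper's. The paper establishes the identity by an explicit case analysis: it writes down piecewise formulas for $P_a$ and $N_a$ on the interval $P \in [0,1)$, extends by periodicity, then separately rewrites $\sigma(a,b)$ as $(p+\ga_a-\xi_a)+(p+\ga_b-\xi_b)+\chi(a,b)$ and splits into the cases $\eps_a=\eps_b=0$ versus $\eps_a\neq 0$, checking the claim in each. By contrast, you bypass all of this by observing directly that both sides equal $-(P_a+P_b)$: the computation of $\sigma(a,b)$ via $\xi_{a_{(-1)}b}=p+\D_a+\D_b$ and $\xi_c=P_c+\D_c-1$ is a one-liner, and the identification $N_a=-(P_a+P_b)$ follows immediately from the defining inequalities $P<P_b\leq P+1$. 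Your route exploits the hypothesis $[\eps_a]+[\eps_b]=[0]$ more efficiently (it forces $P_a+P_b\in\Z$ and makes $[\ga_{a_{(-1)}b}]$ integral), whereas the paper's approach has the minor side benefit of producing the explicit piecewise formulas for $N_a$ and $p+\ga_a-\xi_a$, which could conceivably be reused elsewhere.
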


\begin{proof}
Recall that $P_a$ is defined to be the smallest element of $[\eps_a]$ strictly greater than $P$. As a function of $P$, we have $P_a(P + 1) = P_a(P) + 1$. On the interval $P \in [0, 1)$ we have
\begin{align*}
P_a = \left\{ \begin{array}{ll}
\eps_a + 1 & \text{if $0 \leq P < 1 + \eps_a$,} \\
\eps_a + 2 & \text{if $1 + \eps_a \leq P < 1$.} \\
\end{array} \right.
\end{align*}
Consequently $N(P) = N_a = -\left\lfloor P + P_a \right\rfloor - 1$ satisfies $N(P+1) = N(P)-2$, and on the interval $[0, 1)$ we have
\begin{align*}
N = \left\{ \begin{array}{ll}
-1 & \text{if $0 \leq P < E$,} \\
-2 & \text{if $E \leq P < E'$,} \\
-3 & \text{if $E \leq P < 1$,} \\
\end{array} \right.
\end{align*}
where $E = \min\{1 + \eps_a, -\eps_a\}$, and $E' = \max\{1 + \eps_a, -\eps_a\}$.

Since $[\eps_a] + [\eps_b] = [0]$, we have $\eps_{a_{(n)}b} = 0$, hence $\xi_{a_{(n)}b} = p + \ga_{a_{(n)}b}$ (cf. equation (\ref{epsez})). Therefore
\begin{align*}
\sigma(a, b)
& = \xi_{a_{(-1)}b} + p - \xi_a - \xi_b \\
& = p + \ga_{a_{(-1)}b} + p - (p + \ga_a) - (p + \ga_b) + (p + \ga_a - \xi_a) + (p + \ga_b - \xi_b) \\
& = (p + \ga_a - \xi_a) + (p + \ga_b - \xi_b) + \chi(a, b).
\end{align*}

As a function of $P$, $p + \ga_a - \xi_a$ is periodic with period $1$. On the interval $P \in [0, 1)$, $p(P) = 0$, and
\begin{align*}
p + \ga_a - \xi_a = \left\{ \begin{array}{ll}
0 & \text{if $0 \leq P < 1 + \eps_a$,} \\
-1 & \text{if $1 + \eps_a \leq P < 1$.} \\
\end{array} \right.
\end{align*}

There are now two cases. First suppose $\eps_a = \eps_b = 0$, then $p + \ga_a - \xi_a = p + \ga_b - \xi_b = 0$ and $\chi(a, b) = 0$, hence $\sigma(a, b) = 0$. Meanwhile $N_a = -2p-2$ so we are done.

Now suppose that $\eps_a \neq 0$, hence $\eps_a + \eps_b = -1$ and $\chi(a, b) = 1$. Suppose without loss of generality that $\eps_a \geq \eps_b$. Then
\begin{align*}
(p + \ga_a - \xi_a) + (p + \ga_b - \xi_b) = \left\{ \begin{array}{ll}
0 & \text{if $0 \leq P < 1 + \eps_b$,} \\
-1 & \text{if $1 + \eps_b \leq P < 1 + \eps_a$,} \\
-2 & \text{if $1 + \eps_a \leq P < 1$.} \\
\end{array} \right.
\end{align*}
Thus $N_a = -2p - 2 + \sigma(a, b)$ for $P \in [0, 1)$, and hence for all $P$.
\end{proof}

We can rewrite the modified $n^{\text{th}}$-product identity, the modified Borcherds identity, and equation (\ref{expbor}) for the general case. In each formula we simply replace $\chi$ with $\sigma$. Because of Lemma \ref{coolindex}, the statements made at the end of section \ref{borsec}, with $\chi$ replaced by $\sigma$, are true in the general case. We conclude that $J_{P, \G, \hbar}$ is a right ideal of $(V_\G, *_P)$.

\subsection{Skew-symmetry}

The calculations of section \ref{sksmsec} work with $\xi_a$ in place of $p+\ga_a$. Equation (\ref{swapduple}) now has $(\xi_b-\D_b-\D_a)$ replacing $(p-\ga_a+\eps_a+\eps_b)$ in the top of the binomial coefficient. The other important formula of the section is (\ref{swapexpl}); it remains unchanged. The skew-symmetry formula holds as before, with $[\cdot, \cdot]_\hbar$ defined as before. The proof that $J_{P, \G, \hbar}$ is a left ideal goes as before. Thus $\zhu_{P, \G, \hbar}(V)$ is well-defined.

\subsection{\texorpdfstring{Remaining properties of $\zhu_{P, \G, \hbar}(V)$}{Remaining properties of the Zhu algebra}}

The sections on associativity and unitality carry over verbatim.

Let $P \geq Q \geq 0$. We claim that the identity map on $V_\G$ induces a surjective algebra homomorphism
\[
\phi_{P, Q} : \zhu_{P, \G, \hbar}(V) \twoheadrightarrow \zhu_{Q, \G, \hbar}(V).
\]
To prove this, we first assume that $P - Q \leq 1$.

Suppose $Q < \ga-\D_a \leq P$ for some $\ga \in [\ga_a]$. Then $\xi_{a, P} = \xi_{a, Q} + 1$ and
\begin{align*}
a_{[n, P]}b = a_{[n, Q]}b + \hbar a_{[n+1, Q]}b.
\end{align*}
In Lemma \ref{coolindex} we wrote down a formula for $N_{a, P}$ explicitly as a function of $P$. It has a jump discontinuity of size $1$ at each $\eps \in [\eps_a]$. At $P = \eps$, it takes the lower value. Hence $N_{a, P} \leq N_{a, Q} - 1$. This implies that $J_{P, \G, \hbar} \subseteq J_{Q, \G, \hbar}$.

Now suppose that there is no such $\ga \in [\ga_a]$. Then $\xi_{a, P} = \xi_{a, Q}$, hence $a_{[n, P]}b = a_{[n, Q]}b$. Because $N_{a, P} \leq N_{a, Q}$, we have $J_{P, \G, \hbar} \subseteq J_{Q, \G, \hbar}$.

The main calculation of section \ref{maps} shows that $a_{[n, P]}b \equiv a_{[n, Q]}b \pmod{J_{Q, \G, \hbar}}$. So $\phi_{P, Q}$ makes sense for $P - Q \leq 1$. It is defined for more widely separated $P$ and $Q$ by composing the maps defined above. This definition is sound because all the maps are induced by the identity on $V_\G$.

\subsection{Representation theory}

Section \ref{restrictfunct} carries over, essentially unmodified, to the general case. The same goes for section \ref{inducfunct}. Equation (\ref{particular}), which is proved in appendix \ref{appcalc}, must be replaced by
\begin{align}
\left< \psi, BI(a, b; P_a, -P_a; -1) x \right> = 0, \label{particularprime}
\end{align}
which is proved in the next section.

The functorial statements of section \ref{funcprop} carry over to the general case as well. Now we define a $P$-founded $\zhu_{P, \G}(V)$-module to be one that does not factor to a $\zhu_{Q, \G}(V)$-modules for \emph{each} $Q < P$. The proof of Theorem \ref{fullequiv}, which uses Lemma \ref{akbk}, is easily extended to the general case, using Lemma \ref{akbkprime} instead. The remainder of section \ref{repnthry} is the same for $P \notin \Z_+$ as it was for $P \in \Z_+$.

\subsection{\texorpdfstring{Proof of equation (\ref{particularprime})}{Proof of equation (B.8)}}

Lemma \ref{akbk} generalizes to the following.
\begin{lemma} \label{akbkprime}
Let $V$ be a $\G/\Z$-graded vertex algebra, and $M$ a $(\G, H)$-twisted $V$-module, let $x \in M_P$, let $a, b \in V$ be such that $[\eps_a] + [\eps_b] = [0]$. Also let $P_a$ and $N_a$ be as in section \ref{newnot}. Define
\begin{align}
R_a = R([\eps_a], P) = P_a - \eps_a = \left\lfloor P - \eps_a \right\rfloor + 1,
\end{align}
and let $k$ be an integer such that $0 \leq k \leq P_a-1$. We have
\begin{align}
a^M_{-k+\eps_a} b^M_{k-\eps_a} x = \sum_{m \in \Z_+} \binom{-R_a-k}{m} (a_{[-R_a-k-m]}b)^M_0 x. \label{akbklemprime}
\end{align}
\end{lemma}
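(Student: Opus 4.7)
The plan is to follow the proof of Lemma \ref{akbk} line by line, replacing $p+1$ by $R_a = P_a - \eps_a$ throughout and using the generalized $n^{\text{th}}$-product identity (\ref{biggerpprime}) in place of (\ref{biggerp}). Concretely, I would first expand each summand on the right-hand side of (\ref{akbklemprime}) using (\ref{biggerpprime}) with $n = -R_a - k - m$, together with the identity $P_a - R_a = \eps_a$:
\begin{align*}
(a_{[-R_a-k-m]}b)^M_0 x = \sum_{j \in \Z_+} (-1)^j \binom{-R_a-k-m}{j}\, a^M_{\eps_a - k - m - j}\, b^M_{k + m + j - \eps_a}\, x.
\end{align*}
Substituting this into the right-hand side of (\ref{akbklemprime}) produces a double sum over $m, j \in \Z_+$ of terms $a^M_{\eps_a - k - m - j}\, b^M_{k + m + j - \eps_a}\, x$.

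Next I would reindex by $\alpha = m + j$ and collect, for each $\alpha \in \Z_+$, the coefficient of $a^M_{\eps_a - k - \alpha}\, b^M_{k + \alpha - \eps_a}\, x$. Using the standard identity $(-1)^j \binom{-R_a - k - \alpha + j}{j} = \binom{R_a + k + \alpha - 1}{j}$, this coefficient reduces to
\begin{align*}
\sum_{j=0}^{\alpha} \binom{-R_a - k}{\alpha - j} \binom{R_a + k + \alpha - 1}{j} = [\xi^\alpha]\,(1+\xi)^{\alpha - 1} = \delta_{\alpha, 0},
\end{align*}
exactly as in the proof of Lemma \ref{akbk}. Only the $\alpha = 0$ term survives, recovering $a^M_{\eps_a - k}\, b^M_{k - \eps_a}\, x$, which is the left-hand side of (\ref{akbklemprime}).

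The one point to be careful about is that the outer sum in (\ref{akbklemprime}) now runs over all $m \in \Z_+$ rather than being cut off at $m = p - k$ as in the integer-$P$ case. This is not a genuine obstacle: since $x \in M_P$, all terms with $k + \alpha - \eps_a > P$ vanish automatically because $b^M_{k + \alpha - \eps_a}\, x = 0$, so the double sum is effectively finite. The coefficient identity above is a polynomial identity in its parameters, and therefore applies uniformly for every $\alpha \geq 0$ regardless of how the truncation interacts with non-integer values of $P$. With these observations in place, the rest of the argument is formally identical to the proof of Lemma \ref{akbk}, with $R_a$ and $\xi_a = P_a + \D_a - 1$ playing the roles previously played by $p + 1$ and $\ga_a + p$.
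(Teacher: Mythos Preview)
Your proposal is correct and matches the paper's approach exactly: the paper states Lemma~\ref{akbkprime} as the generalization of Lemma~\ref{akbk} without giving a separate proof, relying on precisely the substitution $p+1 \rightsquigarrow R_a$, $\ga_a + p \rightsquigarrow \xi_a$ and the use of (\ref{biggerpprime}) in place of (\ref{biggerp}) that you outline. Your remark on the unbounded $m$-summation is the only new point needing care, and you handle it correctly by observing that $b^M_{k+\alpha-\eps_a}x = 0$ once $k+\alpha-\eps_a > P$, so the double sum is finite and the coefficient identity $[\xi^\alpha](1+\xi)^{\alpha-1} = \delta_{\alpha,0}$ applies for every $\alpha \geq 0$.
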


Now equation (\ref{akbkeqn}) is replaced by
\begin{align} \label{akbkeqnprime}
\left<\psi, a^M_{-k+\eps_a} b^M_{k-\eps_a} x \right> = \sum_{m \in \Z_+} \binom{-R_a-k}{m} \left<\psi, (a_{[-R_a-k-m]}b)^M_0 x \right>.
\end{align}
We may use this equation to reduce (\ref{particularprime}) to a combinatorial identity as before. This time we arrive at
\begin{align} \label{2ndtolast}
\begin{split}
\binom{\ga_a}{j+1}
= {} & \sum_{k=0}^{R_b-\chi(a, b)-1} \binom{-R_a+1}{k} \binom{\xi_a}{R_a+j+k} \\
& + \sum_{k=0}^{R_a-\chi(a, b)-1} (-1)^{R_b+k+1} \binom{-R_b+\chi(a, b)}{k-1+\chi(a, b)} \binom{k+\ga_a-1+\chi(a, b)}{R_b+j+k}.
\end{split}
\end{align}

If $\chi(a, b) = 0$, then $P_a = P_b = p+1 = R_a = R_b$ and $\xi_a = p + \ga_a$. Substituting these values reduces (\ref{2ndtolast}) to (\ref{Sequals}), which we have already proved. If $\chi(a, b) = 1$, we have
\begin{align}
\xi_a = P_a + \D_a - 1 = P_a + \ga_a - \eps_a - 1 = R_a + \ga_a - 1,
\end{align}
and the same equations for $b$. Substituting these values reduces (\ref{2ndtolast}) to Lemma \ref{bigcomblem} with $(\ga, n, X, Y) = (\ga_a, j+1, R_a-1, R_b-1)$.

\clearpage

\end{document}